\documentclass[12pt,a4paper]{article}
\usepackage[margin=1in]{geometry}
\usepackage[T1]{fontenc}
\usepackage[utf8]{inputenc}
\usepackage{lmodern}
\usepackage{amsmath,amssymb,amsthm}
\usepackage{mathtools}  \usepackage{enumitem}

   \usepackage[numbers,sort&compress]{natbib}   \usepackage[colorlinks=true,linkcolor=blue,citecolor=blue,urlcolor=blue]{hyperref}
\hypersetup{pdftitle={Well-posedness of fully coupled McKean-Vlasov FBSDEs with jumps under full-tuple law dependence},pdfauthor={Chunrong Feng, Christian Garry},
            pdfkeywords={McKean-Vlasov FBSDEs, full-tuple law dependence, infinite jump activity, expected diagonal G-monotonicity, monotone continuation, Wasserstein distance}}
\newcommand{\R}{\mathbb{R}}
\newcommand{\E}{\mathbb{E}}
\newcommand{\Prob}{\mathbb{P}}
\newcommand{\F}{\mathcal{F}}
\newcommand{\diff}{\mathrm{d}}
\newcommand{\Nt}{\widetilde{N}}
\newcommand{\inner}[2]{\langle #1,\,#2\rangle}
\newcommand{\Lnu}{L^2(\nu)}
\newcommand{\Gstar}{G^{\!\top}}
\newcommand{\Law}{\mathrm{Law}}
\newif\ifshowdoi\showdoitrue

\AtBeginDocument{\let\doi\PaperDoi}

\newcommand{\HH}{\mathbb{S}}
\newcommand{\Ssp}[1]{\mathcal{S}^2(0,T;#1)}
\newcommand{\HW}{\mathcal{H}^2_W}
\newcommand{\HN}{\mathcal{H}^2_N}
\newcommand{\Lsp}[1]{L^2(0,T;#1)}
\newcommand{\Hweak}{\mathbb{H}^2}
\newcommand{\Lnorm}[1]{\lVert #1\rVert_{\Lnu}}

\providecommand{\PaperRunIn}[1]{\paragraph{#1.}}

\providecommand{\PaperPageBreak}{}

\providecommand{\PaperSystemBox}[1]{\mathmakebox[0.9\linewidth][c]{#1}}

\newcommand{\PaperEmailFeng}{chunrong.feng@durham.ac.uk}
\newcommand{\PaperEmailGarry}{christian.t.garry@durham.ac.uk}
\newcommand{\PaperDept}{Department of Mathematical Sciences, Durham University}
\newcommand{\PaperStreet}{South Road}
\newcommand{\PaperCity}{Durham}
\newcommand{\PaperPostcode}{DH1 3LE}
\newcommand{\PaperCountry}{United Kingdom}
 \numberwithin{equation}{section}
\theoremstyle{plain}
\newtheorem{theorem}{Theorem}[section]
\newtheorem{proposition}[theorem]{Proposition}
\newtheorem{lemma}[theorem]{Lemma}
\newtheorem{corollary}[theorem]{Corollary}
\theoremstyle{definition}
\newtheorem{assumption}[theorem]{Assumption}
\newtheorem{definition}[theorem]{Definition}
\providecommand{\PaperRemarkStyle}{remark}
\theoremstyle{\PaperRemarkStyle}
\newtheorem{remark}[theorem]{Remark}
\newtheorem{example}[theorem]{Example}
 \title{Well-posedness of fully coupled McKean--Vlasov\\
FBSDEs with jumps under full-tuple law dependence}
\author{Chunrong Feng and Christian Garry$^{*}$\\[4pt]
\small \PaperDept\\[1pt]
\small \PaperStreet, \PaperCity\ \PaperPostcode, \PaperCountry\\[3pt]
\small \texttt{\PaperEmailFeng}\\[1pt]
\small $^{*}$Corresponding author: \texttt{\PaperEmailGarry}}
\date{21 August 2026}
\begin{document}
\maketitle
\begin{abstract}
We prove existence, uniqueness, and stability for fully coupled McKean--Vlasov
forward--backward SDEs with jumps whose drift, diffusion, jump, and driver coefficients
may depend Lipschitz-continuously, in quadratic Wasserstein distance, on the joint law of
the full solution tuple $\Theta=(X,Y,Z,U)$: forward state, backward variable, Brownian
integrand, and $\Lnu$-valued jump integrand. The terminal function may depend
Lipschitz-continuously on $X_T$ and its law. The system is driven by a Brownian motion and an independent compensated Poisson
random measure with arbitrary $\sigma$-finite intensity, so infinite jump activity is
admitted.
Both the Lipschitz and monotonicity hypotheses are imposed only along diagonal
tuple--law pairs $(\Theta,\Law(\Theta))$; we show that expected diagonal monotonicity is
strictly weaker than pointwise monotonicity.

Under a jump-extended $G$-monotonicity condition we
establish an a priori continuous-dependence estimate, uniqueness, and existence on every
prescribed finite horizon, by monotone continuation in the coupling strength from a
small-coupling base case. A mean-field dealer-market example realises the $U$-law
dependence non-perturbatively: its law interaction is monotone at every interaction
strength, and its mark measure has infinite activity.
\end{abstract}

\smallskip
\noindent\textbf{Keywords:} McKean--Vlasov FBSDEs; full-tuple law dependence; infinite jump activity; expected diagonal $G$-monotonicity; monotone continuation; Wasserstein distance.

\noindent\textbf{MSC 2020:} Primary 60H10; secondary 60G57.

\section{Introduction}

We prove existence, uniqueness, and stability, on every fixed finite horizon $T$ and
with no smallness restriction, for the fully coupled
McKean--Vlasov forward--backward system with jumps
\begin{equation}\label{eq:mvfbsdej}
\PaperSystemBox{\left\{\;\begin{aligned}
X_t&=\xi_0+\int_0^t b(s,\Theta_s,\mu_s)\,\diff s+\int_0^t\sigma(s,\Theta_s,\mu_s)\,\diff W_s
  +\int_0^t\!\!\int_E h(s,\Theta_{s}^-,e,\mu_s^-)\,\Nt(\diff s,\diff e),\\[9pt]
Y_t&=g(X_T,\Law(X_T))+\int_t^T f(s,\Theta_s,\mu_s)\,\diff s
  -\int_t^T Z_s\,\diff W_s-\int_t^T\!\!\int_E U_s(e)\,\Nt(\diff s,\diff e),
\end{aligned}\right.}
\end{equation}
driven by a Brownian motion $W$ and an independent compensated Poisson random measure
$\Nt$. The intensity $\nu$ is assumed only $\sigma$-finite: infinite jump
activity, $\nu(E)=\infty$, is admitted. The state space is the
Hilbert space
\[
\mathcal H:=\R^n\times\R^m\times\R^{m\times d}\times L^2(\nu;\R^m),
\]
carrying the norm
\[
|\theta|_{\mathcal H}^2:=|x|^2+|y|^2+|z|_F^2+\Lnorm{u}^2,\qquad \theta=(x,y,z,u),
\]
where $|\cdot|$ is the Euclidean norm (on any $\R^j$) and the Frobenius and
$L^2(\nu;\R^m)$ norms are
\[
|z|_F^2:=\sum_{i,j}z_{ij}^2,
\qquad
\Lnorm{u}^2:=\int_E|u(e)|^2\,\nu(\diff e);
\]
$L^2(\nu;\R^m)$ is henceforth abbreviated $\Lnu$.

We seek a solution tuple
\[
\Theta_t=(X_t,Y_t,Z_t,U_t);
\]
the jump coefficient is evaluated at the predictable version
$\Theta_t^-:=(X_{t-},Y_{t-},Z_t,U_t)$, which takes left limits in the forward and backward
states ($Z,U$ are already predictable). The coefficients are evaluated along the joint-law flows
\[
\mu_t=\Law(\Theta_t),\qquad \mu_t^-:=\Law(\Theta_t^-)\in\mathcal P_2(\mathcal H).
\]
The distinguishing feature is the law argument:
the coefficients are Lipschitz in the quadratic Wasserstein distance $W_2$
on $\mathcal P_2(\mathcal H)$, so the law of the $\Lnu$-valued jump integrand $U$ itself
may enter the coefficients; both the Lipschitz and the monotonicity hypotheses are
imposed only along the solution's own tuple--law pairs $(\Theta,\Law(\Theta))$.

This class
of equations is motivated by the optimality systems of mean-field games and mean-field
control problems with jumps, where the law argument is typically the state or control law,
and by their numerical analysis: the a~posteriori error estimates of Reisinger, Stockinger
and Zhang~\cite{RSZ} for Brownian MV-FBSDEs rest on the Brownian case of the monotonicity
hypothesis used below, and carrying them to jumps needs the well-posedness theory proved
here.

The primary comparison is the theorem of Li and Min~\cite{LiMin}: global well-posedness
for fully coupled mean-field FBSDEs with jumps, proved by continuation under a
jump-extended $G$-monotonicity condition~\cite[(H3.2)]{LiMin}. Their mean-field
interaction is of expectation-functional (independent-copy) type, their jump integrand
enters only through its pointwise values, and their monotonicity hypothesis imposes
finite total jump mass; the present theorem removes all three restrictions, allowing
$W_2$-dependence on the full tuple law, the law of $U$ included, at arbitrary
$\sigma$-finite activity, under the expected $G$-monotonicity condition set out
below. Example~\ref{ex:expfunc} details the embedding of their interaction class.

For Brownian noise, well-posedness of fully coupled mean-field FBSDEs under joint-law
dependence has been studied by Bensoussan, Yam and Zhang~\cite{BYZ}, Chen and
Nie~\cite{ChenNie} and
others~\cite{CarmonaDelarueFBSDE,ChenNieWang,HuaLuo,MinPengQin,TianYu,WuHao,ZhangHuang}
under various monotonicity and weak-coupling assumptions. All of these are Brownian: no
jump channel, no integrand $U$.

With jumps but no mean-field coupling, fully coupled FBSDEs are well posed over arbitrary
horizons by the same monotone-continuation method: Wu~\cite{WuFBSDEP} for the
Brownian--Poisson case, and domination-monotonicity continuation carried to the
L\'evy-driven case by Xu, Tang and Meng~\cite{XuTangMeng}. For jump \emph{mean-field}
systems in decoupled form, the well-posedness and regularity theory is due to
Li~\cite{LiSPA18}. Neither setting addresses the fully coupled problem with full-tuple
law dependence.

Further fully coupled jump mean-field results trade scope for other restrictions.
Matoussi, Manai and Salhi~\cite{MMS} treat $W_2$-dependence on the $(X,Y)$-marginal law
at finite activity, under monotonicity hypotheses and explicit size restrictions on the
Lipschitz constants.
Chen, Shi and Wu~\cite{ChenShiWu} work on an arbitrary fixed horizon in the weakly
coupled regime, where quantitative weak coupling replaces the structural dissipativity
used here. Liu and Zhang~\cite{LiuZhang} allow pointwise dependence on $Y,Z$ but take the
measure argument to be $\Law(X)$, on a small time interval and under finite jump
activity. In a control direction, mean-field-type control driven by jump-diffusions is
solved globally in time by Bensoussan, Huang, Tang and Yam~\cite{BHTY}, with state-law
dependence and finite total jump mass.

Assumption~\ref{ass:mono} below adjoins a jump term to the monotonicity hypothesis
of~\cite[(H.1)(1)]{RSZ}, posed there, as in the condition~\cite[(A1)]{BYZ}, on the joint
law of $(X,Y,Z)$ with a rectangular full-rank $G$. The measure argument is correspondingly the joint law of the full tuple
$(X,Y,Z,U)$, so the law of the $\Lnu$-valued jump integrand may enter the coefficients, and
$\nu$ is $\sigma$-finite, infinite activity included.

The works surveyed above are each restricted
on at least one of the three axes (measure argument, jump activity, coupling strength). We
prove well-posedness under the stated $L^2$ and monotonicity assumptions by global-in-time
monotone continuation; concretely:
\begin{enumerate}[label=(\roman*)]
\item an a~priori stability estimate: the full difference of two solutions is
  controlled by the differences of their coefficients and data (Theorem~\ref{thm:stab});
\item uniqueness in the natural class (Theorem~\ref{thm:uniq});
\item global existence by monotone continuation from a small-coupling base case
  (Theorem~\ref{thm:cwp});
\item robustness: well-posedness survives full-tuple-law perturbations below the
  dissipativity margin at equal forward and backward dimension
  (Corollary~\ref{cor:robust}; Remark~\ref{rem:generalrank} for general rank); and
\item strictness: expected diagonal monotonicity is strictly weaker than its pointwise
  form (Proposition~\ref{prop:strict}).
\end{enumerate}

The paper is organised as follows. Section~\ref{sec:main-assumptions} sets out the model
and its assumptions. Section~\ref{sec:main} states the main results.
Section~\ref{sec:wp-proof} carries out the continuation proof.
Section~\ref{sec:examples} develops the examples. Section~\ref{sec:conclusion} concludes
and identifies open extensions.
Appendix~\ref{app:lawflow} constructs the Borel law flow used throughout.

 \section{The Brownian--Poisson MV-FBSDEJ}\label{sec:main-assumptions}
Let $(\Omega,\F,(\F_t),\Prob)$ be a filtered probability space satisfying the usual
conditions, over a fixed horizon $T>0$ and dimensions $n,m,d\ge1$, carrying a
$d$-dimensional Brownian motion $W$ and an independent Poisson random measure $N$ on
$\R_+\times E$ with intensity $\diff t\,\nu(\diff e)$, where $\nu$ is a
$\sigma$-finite measure on a countably generated $(E,\mathcal E)$ (so that
$L^2(\nu;\R^j)$ is separable for every $j\ge1$). The filtration $(\F_t)$ is the usual augmentation of the one generated by $W$,
$N$ and an independent initial $\sigma$-field $\F_0$.
We do not require $\nu$ to be finite: the infinite-activity regime,
in which $N$ has infinitely many atoms on every nondegenerate time interval, is admitted
throughout, with the finite-activity case $\nu(E)<\infty$ recovered as a special instance.

Write $\Nt(\diff t,\diff e):=N(\diff t,\diff e)-\diff t\,\nu(\diff e)$ for the
associated compensated measure. For predictable $\psi\ge0$ the compensation formula
reads
\begin{equation}\label{eq:compid}
\E\!\int_0^T\!\!\int_E\psi_s(e)\,N(\diff s,\diff e)
=\E\!\int_0^T\!\!\int_E\psi_s(e)\,\nu(\diff e)\,\diff s\in[0,\infty],
\end{equation}
and the zero-mean property of the compensated integral reads
\begin{equation}\label{eq:zeromean}
\E\!\int_0^T\!\!\int_E\psi_s(e)\,\Nt(\diff s,\diff e)=0
\qquad\text{whenever}\quad
\E\!\int_0^T\!\!\int_E|\psi_s(e)|\,\nu(\diff e)\,\diff s<\infty .
\end{equation}

The unknown is the tuple $\Theta=(X,Y,Z,U)$, with law $\mu_t=\Law(\Theta_t)$ and predictable
counterparts $\Theta_t^-,\mu_t^-$ introduced below; the continuous MV-FBSDEJ is the system
\eqref{eq:mvfbsdej} displayed in the introduction. The tuple
$\Theta_t=(X_t,Y_t,Z_t,U_t)$ takes values in the Hilbert space $\mathcal H$,
componentwise
\[
X_t\in\R^n,\qquad Y_t\in\R^m,\qquad Z_t\in\R^{m\times d},\qquad U_t\in L^2(\nu;\R^m).
\]
The jump terms are evaluated at the predictable tuple
\[
\Theta_t^-=(X_{t-},Y_{t-},Z_t,U_t),\qquad \mu_t^-=\Law(\Theta_t^-),
\]
in which the forward and backward states are replaced by their left limits while the
integrands $Z_t,U_t$, already predictable, are left untouched.
Throughout, $\mu_t$ and $\mu_t^-$ denote the Borel
representatives fixed in Lemma~\ref{lem:lawflow}, which agree with $\Law(\Theta_t)$ and
$\Law(\Theta_t^-)$ for a.e.\ $t$. Expressions such as $b(t,\Theta_t,\mu_t)$ are thereby
defined at every $t$, while every claim made about them is $\diff t$-a.e.\ or
time-integrated.

The filtration generated by $W$ and $N$ has the \emph{Brownian--Poisson predictable
representation property}~(\cite[\S5.3]{Applebaum}; \cite[Theorem~13.49]{HeWangYan}), and it
passes to the independent initial enlargement by $\F_0$ in the standard way:
every square-integrable $(\F_t)$-martingale $M$ can be written as
\[
M_t=M_0+\int_0^t Z_s\,\diff W_s+\int_0^t\!\!\int_E U_s(e)\,\Nt(\diff s,\diff e),
\qquad\text{for every }t\in[0,T],
\]
for an $\F_0$-measurable $M_0$ and a predictable square-integrable pair $(Z,U)$, and this
pair is unique up to null sets.

\begin{definition}[Diagonal input]\label{def:diag}
Fix $t\in[0,T]$. A \emph{diagonal input at time $t$} is a pair $(\Theta,\mu)$ consisting of a
square-integrable $\F_t$-measurable tuple $\Theta\in L^2(\Omega,\F_t;\mathcal H)$, on the
standing basis, \emph{together with its own law} $\mu=\Law(\Theta)\in\mathcal P_2(\mathcal H)$;
such pairs form the diagonal
\[
\mathcal D_t:=\bigl\{\bigl(\Theta,\Law(\Theta)\bigr):\Theta\in L^2(\Omega,\F_t;\mathcal H)\bigr\}
\ \subset\ L^2(\Omega,\F_t;\mathcal H)\times\mathcal P_2(\mathcal H).
\]
\end{definition}

\noindent All hypotheses below are imposed along $\mathcal D_t$ only: the measure argument is
never varied independently of the tuple. Throughout, a lower-case $\theta$ denotes a \emph{point} of
$\mathcal H$ and an upper-case $\Theta$ denotes a \emph{random} $\mathcal H$-valued tuple.

\smallskip\noindent\textbf{Difference notation.}
For two diagonal inputs $(\Theta,\mu),(\Theta',\mu')$ at the same time $t$ on the same
probability space, $\delta$ denotes the difference
\[
\delta X:=X-X',\qquad
\delta\psi:=\psi(t,\Theta,\mu)-\psi(t,\Theta',\mu'),\quad\psi\in\{b,\sigma,h,f\},
\]
and, with the \emph{induced} terminal laws $\rho:=\Law(X_T)$ and
$\rho':=\Law(X'_T)$,
\[
\delta g_T:=g(X_T,\rho)-g(X'_T,\rho') .
\]

Throughout, $b,\sigma,h,f,g$ are deterministic functions of their arguments; randomness
enters only through the inputs and, in Section~\ref{sec:wp-proof}, through additive
square-integrable forcings (the $\omega$-random coefficients of~\cite{ChenNie,LiMin} are
not pursued here). Their signatures are
\[
\begin{aligned}
b&\colon\ [0,T]\times\mathcal H\times\mathcal P_2(\mathcal H)\ \to\ \R^n,\\
\sigma&\colon\ [0,T]\times\mathcal H\times\mathcal P_2(\mathcal H)\ \to\ \R^{n\times d},\\
h&\colon\ [0,T]\times\mathcal H\times\mathcal P_2(\mathcal H)\ \to\ L^2(\nu;\R^n),\\
f&\colon\ [0,T]\times\mathcal H\times\mathcal P_2(\mathcal H)\ \to\ \R^m,\\
g&\colon\ \R^n\times\mathcal P_2(\R^n)\ \to\ \R^m,
\end{aligned}
\]
the terminal function alone carrying no time argument and depending on the state and its
law. All coefficients are jointly Borel measurable in their arguments; for the jump
coefficient, whose values lie in the fibre, this is taken as Borel measurability of the map
\begin{equation}\label{eq:hborel}
(t,\theta,\mu)\ \longmapsto\ h(t,\theta,\cdot,\mu),
\qquad
[0,T]\times\mathcal H\times\mathcal P_2(\mathcal H)\ \longrightarrow\ L^2(\nu;\R^n).
\end{equation}
Since $L^2(\nu;\R^n)$ is separable, \eqref{eq:hborel} follows from joint pointwise
measurability of $(t,\theta,e,\mu)\mapsto h(t,\theta,e,\mu)$: the latter makes $h$ weakly
measurable by Tonelli, and the Pettis measurability
theorem~\cite[Theorem~1.1.6]{HNVW} then upgrades weak to strong measurability.

For predictable processes we use the Fubini isometry
\begin{gather}
\mathcal P:=\text{predictable $\sigma$-field on }\Omega\times[0,T]\text{ under }\diff t\otimes\diff\Prob,\notag\\
L^2(\mathcal P;\Lnu)\ \cong\ L^2(\mathcal P\otimes\mathcal E)\label{eq:fubiniiso}
\end{gather}
of~\cite[Propositions~1.2.24 and~1.2.25]{HNVW}, which supplies a
$\mathcal P\otimes\mathcal E$-measurable representative, unique up to
$\diff t\otimes\diff\Prob\otimes\nu$-null sets; every Poisson integral below, and every
square-integrable $\Lnu$-valued predictable integrand appearing in one, is read through
such a representative. In particular, a pointwise formula $u(e)$ for $u\in\Lnu$ is
meaningful only when it descends to a well-defined element of $\Lnu$, independent of
the representative of $u$. Consequently, along any diagonal input,
\begin{equation}\label{eq:coefmeas}
\begin{aligned}
(s,\omega)&\mapsto\psi\big(s,\Theta_s(\omega),\mu_s\big),\ \ \psi\in\{b,\sigma,f\},
&&\text{progressively measurable},\\[2pt]
(s,\omega,e)&\mapsto h\big(s,\Theta_{s-}(\omega),e,\mu_s^-\big),
&&\text{predictable}.
\end{aligned}
\end{equation}
The well-definedness of the law flows $t\mapsto\mu_t,\mu_t^-$ and the independence of all
time-integrated coefficient identities from the choice of predictable representatives of
$Z,U$ are established in Lemma~\ref{lem:lawflow} in Appendix~\ref{app:lawflow}.

\begin{assumption}[Integrability at the origin]\label{ass:integ}
$\xi_0\in L^2(\Omega,\F_0;\R^n)$ and, with $\delta_0$ the Dirac mass at the relevant origin,
the origin values satisfy
\[
\begin{aligned}
b(\cdot,0,\delta_0)\ &\in\ L^2(0,T;\R^n),
&\qquad\quad \sigma(\cdot,0,\delta_0)\ &\in\ L^2(0,T;\R^{n\times d}),\\[2.2ex]
h(\cdot,0,\cdot,\delta_0)\ &\in\ L^2\bigl(0,T;L^2(\nu;\R^n)\bigr),
&\qquad\quad f(\cdot,0,\delta_0)\ &\in\ L^2(0,T;\R^m).
\end{aligned}
\]
\end{assumption}
\begin{assumption}[Lipschitz coefficients]\label{ass:lip}
There is $L\ge0$ such that, for a.e.\ $t\in[0,T]$ and all
$(\Theta,\mu),(\Theta',\mu')\in\mathcal D_t$ (Definition~\ref{def:diag}),
\[
|\delta b|+|\delta\sigma|_F+\Lnorm{\delta h}+|\delta f|
\ \le\ L\bigl(|\delta X|+|\delta Y|+|\delta Z|_F+\Lnorm{\delta U}+W_2(\mu,\mu')\bigr)
\]
almost surely; and, for all $X_T,X'_T\in L^2(\Omega,\F_T;\R^n)$,
\begin{equation}\label{eq:lipT}
|\delta g_T|\ \le\ L\bigl(|X_T-X'_T|+W_2(\rho,\rho')\bigr)
\end{equation}
almost surely.
\end{assumption}

\begin{assumption}[Coupling matrix]\label{ass:G}
$G\in\R^{m\times n}$ has full rank. Write $\|G\|$ for its operator norm, so that
$|Gv|\le\|G\|\,|v|$ and $|\Gstar v|\le\|G\|\,|v|$, and $s_{\min}(G)>0$ for its
smallest singular value; set
\[
c_G:=s_{\min}(G)^{-2} .
\]
Then
\[
|x|^2\le c_G|Gx|^2\quad(m\ge n),
\qquad\qquad
|y|^2\le c_G|\Gstar y|^2\quad(n\ge m).
\]
When $\Gstar$ is injective the second bound extends to the matrix- and $\Lnu$-valued
arguments,
\[
|Z|_F^2\le c_G|\Gstar Z|_F^2,
\qquad\qquad
\Lnorm{U}^2\le c_G\Lnorm{\Gstar U}^2 .
\]
\end{assumption}
\begin{assumption}[Terminal and interior jump-extended $G$-monotonicity]\label{ass:mono}
For all $X_T,X'_T\in L^2(\Omega,\F_T;\R^n)$,
\[
\E\inner{\delta g_T}{G\,\delta X_T}\ge\alpha\,\E|G\delta X_T|^2,
\]
and, for a.e.\ $t\in[0,T]$ and all $(\Theta,\mu),(\Theta',\mu')\in\mathcal D_t$,
\begin{multline}\label{eq:mono}
\E\!\Big[\inner{\delta b}{\Gstar\delta Y}+\inner{\delta\sigma}{\Gstar\delta Z}_F
+\inner{-\delta f}{G\delta X}+\!\int_E\!\inner{\delta h(e)}{\Gstar\delta U(e)}\nu(\diff e)\Big]\\
\le-\beta_1\E\big[|\Gstar\delta Y|^2+|\Gstar\delta Z|_F^2+\Lnorm{\Gstar\delta U}^2\big]-\beta_2\E|G\delta X|^2
\end{multline}
The constants satisfy
\[
\alpha,\beta_1,\beta_2\ge0,\qquad \alpha+\beta_1>0,\qquad \beta_1+\beta_2>0,
\]
with $\beta_1>0$ if $m<n$ and $\alpha,\beta_2>0$ if $m>n$.
\end{assumption}
\begin{remark}[Wasserstein bounds]
The pair $(\Theta,\Theta')$ realises a coupling of $(\mu,\mu')$ on the common probability
space, so the $W_2$-infimum~\cite[Definition~6.1]{Villani2009} is bounded by its synchronous transport
cost:
\begin{align}
W_2(\mu,\mu')^2&\le\E\big[|\delta X|^2+|\delta Y|^2+|\delta Z|_F^2+\Lnorm{\delta U}^2\big],
\label{eq:coupling}\\
W_2(\rho,\rho')^2&\le\E|\delta X|^2 ,\label{eq:couplingT}
\end{align}
\eqref{eq:couplingT} being the $X$-marginal case of \eqref{eq:coupling}.

In the other direction, let $K$ be a separable Hilbert space and $\bar\phi:\mathcal H\to K$
Lipschitz. The law functional $\mu\mapsto\int_{\mathcal H}\bar\phi\,\diff\mu$ is then
$W_2$-Lipschitz with the same constant: writing the difference as an integral against an
optimal coupling $\pi$ of $(\mu,\mu')$, the Lipschitz bound and Jensen give
\begin{align}
\Big|\int_{\mathcal H}\bar\phi\,\diff\mu-\int_{\mathcal H}\bar\phi\,\diff\mu'\Big|_K
&=\Big|\int_{\mathcal H\times\mathcal H}\big(\bar\phi(\theta)-\bar\phi(\theta')\big)\,\diff\pi\Big|_K\nonumber\\
&\le\mathrm{Lip}(\bar\phi)\int_{\mathcal H\times\mathcal H}|\theta-\theta'|_{\mathcal H}\,\diff\pi\nonumber\\
&\le\mathrm{Lip}(\bar\phi)\Big(\int_{\mathcal H\times\mathcal H}|\theta-\theta'|_{\mathcal H}^2\,\diff\pi\Big)^{1/2}\nonumber\\
&\le\mathrm{Lip}(\bar\phi)\,W_2(\mu,\mu') .\label{eq:lipfun}
\end{align}
Marginal projection $\mathcal P_2(\mathcal H)\to\mathcal P_2(\Lnu)$ is $1$-Lipschitz for $W_2$
by the same argument: write $\mu^U\in\mathcal P_2(\Lnu)$ for the $U$-marginal of $\mu$. The image
of this optimal $\pi$ under $(\theta,\theta')\mapsto(u,u')$ is a coupling of
$(\mu^U,\mu'^U)$, so bounding $W_2(\mu^U,\mu'^U)$ by its cost gives
\begin{align}
W_2(\mu^U,\mu'^U)^2
&\le\int_{\mathcal H\times\mathcal H}\Lnorm{u-u'}^2\,\diff\pi\nonumber\\
&\le\int_{\mathcal H\times\mathcal H}|\theta-\theta'|_{\mathcal H}^2\,\diff\pi\nonumber\\
&\le W_2(\mu,\mu')^2 .\label{eq:margproj}
\end{align}
\end{remark}
\begin{remark}[The coupling matrix $G$]\label{rem:G-pairing}
The full-rank matrix $G\in\R^{m\times n}$ serves to pair the forward and backward
components when $n\neq m$; its full-rank property ensures the resulting $G$-coercivity
controls the full forward variables when $m\ge n$, or the full backward variables when
$n\ge m$.
\end{remark}
\begin{remark}[Expected diagonal monotonicity]\label{rem:diag-mono}
Assumption~\ref{ass:mono} is imposed \emph{in expectation}. The \emph{pointwise}
$G$-monotonicity condition requires the inequality of \eqref{eq:mono}, without expectations,
at every fixed $\theta,\theta'\in\mathcal H$ and $\mu,\mu'\in\mathcal P_2(\mathcal H)$.
Applying it at
\[
\theta=\Theta(\omega),\qquad \theta'=\Theta'(\omega),\qquad
\mu=\Law(\Theta),\qquad \mu'=\Law(\Theta')
\]
and taking expectations gives Assumption~\ref{ass:mono}, so the
pointwise condition is the stronger one. The converse
fails: the expectation may average sign-indefinite pointwise pairings into a dissipative one
(Proposition~\ref{prop:strict}). Example~\ref{ex:concrete}(i) verifies
Assumption~\ref{ass:mono} exactly, Example~\ref{ex:fulllaw} perturbatively.
\end{remark}

\begin{remark}[Coupling formulation]\label{rem:intrinsic}
For $t>0$ the $\sigma$-field $\F_t$ supports a uniformly distributed variable (a
function of $W^1_t$), so every coupling of two laws in $\mathcal P_2(\mathcal H)$ is
the joint law of a pair of $\F_t$-measurable
inputs, by the kernel representation lemma~\cite[Lemma~4.22]{Kallenberg}. The diagonal hypotheses,
Assumptions~\ref{ass:lip} and~\ref{ass:mono}, are therefore intrinsic: properties of
the coefficients, not of the standing basis. As the interior conditions are imposed
$\diff t$-a.e.\ and the terminal ones at $T>0$, no richness assumption on $\F_0$ is
needed. The proofs use only the expected diagonal form.
\end{remark}
\begin{remark}[The $\beta_1$ dichotomy in the jump channel]\label{rem:beta1-dich}
Taking $\delta X=\delta Y=\delta Z=0$ and $\delta U$ arbitrary in \eqref{eq:mono} reduces the
interior pairing to
\[
\E\int_E\inner{\delta h(e)}{\Gstar\delta U(e)}\,\nu(\diff e)
\le-\beta_1\,\E\Lnorm{\Gstar\delta U}^2 ,
\]
so $\beta_1>0$ forces the jump coefficient $h$ to be strictly $U$-dissipative. A jump
coefficient of the natural additive form $h=h(t,x,e)$ carries no $U$-feedback, so
$\delta h=0$ while $\delta U$ may be chosen with $\Gstar\delta U\neq0$; the displayed bound
then forces $\beta_1\le0$, hence $\beta_1=0$ as the constants are non-negative.
Assumption~\ref{ass:mono} then requires $m\ge n$ with $\alpha,\beta_2>0$, the
$G$-injective branch, and the $m<n$ rank case is available only for $U$-dissipative jump
coefficients.
The restriction is structural: it follows from Assumption~\ref{ass:mono} itself, not from
the estimates that exploit it. Recovering the additive form at $m<n$ under a different
structural condition remains open.
\end{remark}
\begin{remark}[Dissipativity margin]\label{rem:margin}
Say that \eqref{eq:mono} holds \emph{with margin} $c>0$ if
\begin{equation}\label{eq:monomargin}
\begin{aligned}
&\E\!\Big[\inner{\delta b}{\Gstar\delta Y}+\inner{\delta\sigma}{\Gstar\delta Z}_F
+\inner{-\delta f}{G\delta X}+\!\int_E\!\inner{\delta h(e)}{\Gstar\delta U(e)}\nu(\diff e)\Big]\\
&\quad\le-\beta_1\E\big[|\Gstar\delta Y|^2+|\Gstar\delta Z|_F^2+\Lnorm{\Gstar\delta U}^2\big]-\beta_2\E|G\delta X|^2\\
&\qquad\ \ -c\,\E\big[|\Gstar\delta Y|^2+|\Gstar\delta Z|_F^2+\Lnorm{\Gstar\delta U}^2+|G\delta X|^2\big] .
\end{aligned}
\end{equation}
Suppose the interior pairing has been bounded with constants $\bar\beta_1,\bar\beta_2$. For
every $0<c<\min\{\bar\beta_1,\bar\beta_2\}$ the same bound is \eqref{eq:monomargin} with the
reduced constants $\beta_i:=\bar\beta_i-c$ and margin $c$: recording weaker constants turns the
difference into slack, which a perturbation raising the interior pairing by at most the margin
leaves intact. When $m=n$ the margin may instead be recorded in the \emph{unweighted} form
\begin{equation}\label{eq:monomarginu}
\begin{aligned}
&\E\!\Big[\inner{\delta b}{\Gstar\delta Y}+\inner{\delta\sigma}{\Gstar\delta Z}_F
+\inner{-\delta f}{G\delta X}+\!\int_E\!\inner{\delta h(e)}{\Gstar\delta U(e)}\nu(\diff e)\Big]\\
&\quad\le-\beta_1\E\big[|\Gstar\delta Y|^2+|\Gstar\delta Z|_F^2+\Lnorm{\Gstar\delta U}^2\big]-\beta_2\E|G\delta X|^2\\
&\qquad\ \ -c\,\E\big[|\delta Y|^2+|\delta Z|_F^2+\Lnorm{\delta U}^2+|\delta X|^2\big] .
\end{aligned}
\end{equation}
\end{remark}
 \PaperPageBreak
\section{Main results}\label{sec:main}

\PaperRunIn{Solution space}
Write
\[
\begin{aligned}
\Lsp{\R^j}&:=\bigl\{V\in\R^j\ \text{c\`adl\`ag adapted}\ :\ \E\!\int_0^T\!|V_t|^2\,\diff t<\infty\bigr\},\\
\Ssp{\R^j}&:=\bigl\{V\in\R^j\ \text{c\`adl\`ag adapted}\ :\ \E\sup_{0\le t\le T}|V_t|^2<\infty\bigr\},\\
\HW(0,T;\R^{m\times d})&:=\Bigl\{Z\in\R^{m\times d}\ \text{predictable}\ :\ \E\!\int_0^T\!|Z_t|_F^2\,\diff t<\infty\Bigr\},\\
\HN(0,T;L^2(\nu;\R^m))&:=\Bigl\{U\in L^2(\nu;\R^m)\ \text{predictable}\ :\ \E\!\int_0^T\!\Lnorm{U_t}^2\,\diff t<\infty\Bigr\}.
\end{aligned}
\]
Set
\[
\begin{aligned}
\HH&:=\Ssp{\R^n}\times\Ssp{\R^m}\times\HW(0,T;\R^{m\times d})\times\HN(0,T;L^2(\nu;\R^m)),\\
\Hweak&:=\Lsp{\R^n}\times\Lsp{\R^m}\times\HW(0,T;\R^{m\times d})\times\HN(0,T;L^2(\nu;\R^m)),
\end{aligned}
\]
with squared norm
\[
\|\Theta\|_{\HH}^2:=\E\sup_{t\le T}|X_t|^2+\E\sup_{t\le T}|Y_t|^2
+\E\!\int_0^T\!\big(|Z_t|_F^2+\Lnorm{U_t}^2\big)\diff t.
\]
The elements are equivalence classes: c\`adl\`ag components are identified when
indistinguishable, $Z$-components when they agree $\diff t\otimes\diff\Prob$-a.e., and
$U$-components when they agree $\diff t\otimes\diff\Prob\otimes\nu$-a.e. Under this
identification $\|\cdot\|_{\HH}$ is a norm rather than a seminorm, and
$(\HH,\|\cdot\|_{\HH})$ is a Banach space. Since
$\E\int_0^T|V_t|^2\,\diff t\le T\,\E\sup_{t\le T}|V_t|^2$, the inclusion
$\HH\subset\Hweak$ holds.

A \emph{solution} of \eqref{eq:mvfbsdej} is a tuple $\Theta=(X,Y,Z,U)\in\HH$ satisfying both
identities of \eqref{eq:mvfbsdej} almost surely for every $t\in[0,T]$, with $h$ evaluated at
the predictable tuple $(\Theta^-_t,\mu^-_t)$. Both sides of each identity are c\`adl\`ag in
$t$, so the exceptional null set may be taken independent of $t$.

The stability and uniqueness theorems below rest on comparing solutions of
\eqref{eq:mvfbsdej} under two coefficient systems, indexed by $k=1,2$. We write $\delta\,{\cdot}={\cdot}^1-{\cdot}^2$, with
$\mu^k_t:=\Law(\Theta^k_t)$, and evaluate every coefficient difference along solution~$2$:
\[
\begin{aligned}
(\delta\psi)(t)&:=\psi^1(t,\Theta^2_t,\mu^2_t)-\psi^2(t,\Theta^2_t,\mu^2_t),
&&\psi\in\{b,\sigma,f\},\\
(\delta h)(t)&:=h^1(t,\Theta^{2,-}_t,\cdot,\mu^{2,-}_t)-h^2(t,\Theta^{2,-}_t,\cdot,\mu^{2,-}_t),\\
\widehat{\delta g}_T&:=g^1\big(X^2_T,\Law(X^2_T)\big)-g^2\big(X^2_T,\Law(X^2_T)\big).
\end{aligned}
\]

\begin{theorem}[Stability under coefficient and data perturbation]\label{thm:stab}
Let the coefficient system $(b^1,\sigma^1,h^1,f^1,g^1)$, with initial datum $\xi_0^1$,
satisfy Assumptions~\ref{ass:integ}--\ref{ass:mono} with constants $L,\alpha,\beta_1,\beta_2$
and coupling matrix $G$; let $(b^2,\sigma^2,h^2,f^2,g^2)$, with initial datum $\xi_0^2$,
satisfy Assumptions~\ref{ass:integ}--\ref{ass:lip} with its own Lipschitz constant; and let
$\Theta^k\in\HH$ solve \eqref{eq:mvfbsdej} for system~$k$. Then
\begin{equation}\label{eq:stab}
\begin{split}
&\E\sup_{t\le T}|\delta X_t|^2+\E\sup_{t\le T}|\delta Y_t|^2
+\E\!\int_0^T\!\big(|\delta Z_t|_F^2+\Lnorm{\delta U_t}^2\big)\diff t\\
&\quad\le C_{\mathrm{stab}}\Big(\E|\delta\xi_0|^2+\E\big|\widehat{\delta g}_T\big|^2\\
&\qquad\qquad\quad+\E\!\int_0^T\!\big(|(\delta b)(t)|^2+|(\delta\sigma)(t)|_F^2
+\Lnorm{(\delta h)(t)}^2+|(\delta f)(t)|^2\big)\diff t\Big),
\end{split}
\end{equation}
where $C_{\mathrm{stab}}$ depends only on $T,L,\|G\|,c_G,\alpha,\beta_1,\beta_2$ and the universal
Burkholder--Davis--Gundy (BDG) constant $c_{\mathrm{BDG}}$. The dependence is on
system~$1$ alone: system~$2$ enters \eqref{eq:stab} only through its right-hand side.
Neither the dimensions $n,m,d$ nor the total mass $\nu(E)$ appears, so the estimate is
uniform over the infinite-activity regime.
\end{theorem}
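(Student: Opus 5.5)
The plan is the standard duality (Itô-product) argument for monotone FBSDEs, followed by separate forward and backward $L^2$/BDG estimates that are closed using the coercivity from Assumption~\ref{ass:mono}.

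\textbf{Step 1: the duality identity.} First split each difference of coefficients, for instance
\[
b^1(t,\Theta^1_t,\mu^1_t)-b^2(t,\Theta^2_t,\mu^2_t)
=\big[b^1(t,\Theta^1_t,\mu^1_t)-b^1(t,\Theta^2_t,\mu^2_t)\big]+(\delta b)(t),
\]
and likewise for $\sigma,h,f,g$. The bracketed term is a diagonal difference for system~$1$. Apply Itô's formula to $\inner{G\delta X_t}{\delta Y_t}$ on $[0,T]$ and take expectations. The stochastic integrals have zero mean, since all integrands lie in $\HH$ and \eqref{eq:zeromean} applies after a localisation. The jump cross term $\sum\inner{G\Delta\delta X}{\Delta\delta Y}$ is compensated to $\E\int\!\int_E\inner{\delta h^{\rm tot}(e)}{\Gstar\delta U(e)}\nu(\diff e)\diff t$. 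This yields
\[
\E\inner{\delta g^{\rm tot}_T}{G\delta X_T}-\E\inner{G\delta\xi_0}{\delta Y_0}
=\E\!\int_0^T\!\big[\text{interior pairing of \eqref{eq:mono}}\big]\diff t+\E\!\int_0^T\!R_t\,\diff t ,
\]
where $R_t$ collects the pairings of $(\delta b,\delta\sigma,\delta h,-\delta f)(t)$ with $(\Gstar\delta Y,\Gstar\delta Z,\Gstar\delta U,G\delta X)$. Since $\delta X_{t-}$ and $\delta X_t$ differ only on a $\diff t$-null set of times, the left-limit evaluation in $h$ causes no trouble; Lemma~\ref{lem:lawflow} lets us use $\mu^-_t=\mu_t$ for a.e.\ $t$. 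Applying Assumption~\ref{ass:mono} to system~$1$, both at the terminal time and in the interior (both are diagonal inputs), gives
\[
\alpha\E|G\delta X_T|^2+\beta_1\E\!\int\!\big(|\Gstar\delta Y|^2+|\Gstar\delta Z|^2+\|\Gstar\delta U\|^2\big)+\beta_2\E\!\int|G\delta X|^2
\le \E|R|+\E|\inner{\widehat{\delta g}_T}{G\delta X_T}|+\E|\inner{G\delta\xi_0}{\delta Y_0}| .
\]
We then bound the right-hand side by Young with a small parameter $\varepsilon$, giving $\varepsilon\times$(solution-difference norms) $+\ C_\varepsilon\times$(data terms) $=:\varepsilon\,\|\delta\Theta\|^2+C_\varepsilon\Delta$.

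\textbf{Step 2: separate forward and backward estimates.} Standard estimates (Itô on $|\delta X|^2$, Gronwall, BDG; and Itô on $|\delta Y|^2$ backwards) use only Assumption~\ref{ass:lip} for system~$1$. Here $W_2(\mu^1_t,\mu^2_t)^2\le\E|\delta\Theta_t|^2$ by \eqref{eq:coupling}, so the law terms are absorbed just like the state terms. This gives two bounds. The forward bound is $\E\sup|\delta X|^2\le C(\E|\delta\xi_0|^2+\E\int(|\delta Y|^2+|\delta Z|^2+\|\delta U\|^2)+\Delta)$. The backward bound is $\E\sup|\delta Y|^2+\E\int(|\delta Z|^2+\|\delta U\|^2)\le C(\E|\delta X_T|^2+\E\int|\delta X|^2+\Delta)$. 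For the $U$-term we use $\E\int\!\int|\delta U|^2N=\E\int\|\delta U\|^2$ from \eqref{eq:compid}. BDG on the compensated integral involves only quadratic variation, so $\nu(E)$ never enters.

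\textbf{Step 3: closing by cases.}
\begin{itemize}
\item Case $m\ge n$ (so $\alpha,\beta_2>0$ when $m>n$). Step~1 controls $\E|\delta X_T|^2+\E\int|\delta X|^2$ through $c_G$. Inserting this into the backward estimate, and then the result into the forward one, we absorb the $\varepsilon$ terms.
\item Case $n\ge m$ with $\beta_1>0$. Step~1 controls $\E\int(|\delta Y|^2+|\delta Z|^2+\|\delta U\|^2)$ through the injectivity of $\Gstar$. We feed this into the forward estimate, then the backward one, and absorb.
\item Mixed case $m=n$. Here only one of $\alpha+\beta_1>0$ and $\beta_1+\beta_2>0$ configurations holds. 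We use the forward estimate with Gronwall to trade $\int|\delta X|^2$ for $\int|\delta Y|^2,\dots$, or the backward estimate to trade $\delta Y,\delta Z,\delta U$ for $\delta X_T,\int|\delta X|^2$, following Peng–Wu's case split.
\end{itemize}

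The main obstacle I expect is the absorption. The Young constants and the forward/backward Lipschitz constants interact on a long horizon, so the forward Gronwall factor $e^{CT}$ multiplies the $\varepsilon$ term. I would first choose $\varepsilon$ after the Gronwall constants are fixed, which is legitimate since those depend only on $T$ and $L$. In the mixed cases (e.g.\ $\beta_1=0$, $\alpha>0$, $\beta_2>0$ at $m=n$) the backward estimate needs $\delta X$ over the whole interval, and this is exactly what $\beta_2$ supplies. I would check each admissible constant configuration separately. Throughout, all constants depend only on $T,L,\|G\|,c_G,\alpha,\beta_i,c_{\rm BDG}$, as claimed.
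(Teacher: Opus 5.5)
Your proposal is correct and follows the paper's route. Splitting each coefficient difference into a system-$1$ diagonal difference plus $(\delta\psi)(t)$ is exactly the paper's device of reading $\Theta^2$ as a forced solution of system~$1$ with forcing $-(\delta\psi)$ and invoking Lemma~\ref{lem:pert} at $\lambda=1$, whose proof is the same $G$-energy identity, insertion of Assumption~\ref{ass:mono}, forward/backward estimates, rank-case closure and absorption that you outline.

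Two small corrections. First, cross martingales such as $\int_0^T\!\int_E\inner{G\,\delta X_{s-}}{\delta U_s(e)}\,\Nt(\diff s,\diff e)$ have zero mean by BDG and the integrability of their running suprema, not by \eqref{eq:zeromean}. Its $L^1(\nu)$ hypothesis can fail when $\nu(E)=\infty$, even after localising $\delta X$; only the pure-jump product term is handled by \eqref{eq:zeromean}. Second, at $m=n$ there is no genuinely mixed case: either $\beta_1>0$ (your second case), or $\beta_1=0$, which forces $\alpha,\beta_2>0$ (your first case).
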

\noindent The proof, a reduction to Lemma~\ref{lem:pert} at $\lambda=1$, is in
Section~\ref{sub:stabuniq}.

\noindent If system~$2$ also satisfies Assumptions~\ref{ass:G}--\ref{ass:mono}, with its
own coupling matrix and constants, exchanging the roles gives the companion bound with the
differences evaluated along solution~$1$ and the constant built from system~$2$'s data.
When the two systems share the four interior coefficients, the bound gives continuous
dependence on the initial and terminal data. More minimally, the hypotheses on system~$2$
serve only to make the right-hand side of \eqref{eq:stab} finite: the estimate holds
whenever the displayed differences are square-integrable.

\begin{remark}[Solution class]\label{rem:cwp-class}
Let $\Theta=(X,Y,Z,U)\in\Hweak$ satisfy \eqref{eq:mvfbsdej}. Then $\Theta\in\HH$. Indeed, the coefficients evaluated along $\Theta$ lie in
$L^2(\diff t\otimes\diff\Prob)$ by Assumptions~\ref{ass:integ}--\ref{ass:lip}, so Doob/BDG
applied to the forward equation gives
\[
\E\sup_{t\le T}|X_t|^2<\infty ,
\]
whence $g(X_T,\Law(X_T))\in L^2$ by Assumption~\ref{ass:lip}; taking $t=0$ in the backward
equation gives $Y_0\in L^2$. Subtracting that identity from the backward
equation puts $Y$ in forward form,
\[
Y_t=Y_0-\int_0^t f(s,\Theta_s,\mu_s)\,\diff s+\int_0^t Z_s\,\diff W_s
+\int_0^t\!\!\int_E U_s(e)\,\Nt(\diff s,\diff e),
\]
so Cauchy--Schwarz on the drift and Doob/BDG on the two martingales give
\[
\E\sup_{t\le T}|Y_t|^2<\infty .
\]
\end{remark}

\begin{theorem}[Uniqueness]\label{thm:uniq}
Under Assumptions~\ref{ass:lip}--\ref{ass:mono}, the system \eqref{eq:mvfbsdej} has at
most one solution. More precisely, if $\Theta^1,\Theta^2\in\Hweak$ satisfy
\eqref{eq:mvfbsdej} with the same initial condition, then $\Theta^1=\Theta^2$. Furthermore,
$\Theta^1-\Theta^2\in\HH$.
\end{theorem}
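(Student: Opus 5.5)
The argument has two steps: upgrade both solutions to the solution class $\HH$ using Remark~\ref{rem:cwp-class}, and then apply the stability estimate of Theorem~\ref{thm:stab} with both coefficient systems equal.

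\emph{Regularity.} Let $\Theta^1,\Theta^2\in\Hweak$ satisfy \eqref{eq:mvfbsdej}. By Remark~\ref{rem:cwp-class}, each $\Theta^k$ lies in $\HH$.

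The Remark uses Assumptions~\ref{ass:integ}--\ref{ass:lip}, while Theorem~\ref{thm:uniq} lists only Assumptions~\ref{ass:lip}--\ref{ass:mono}. Assumption~\ref{ass:integ} is only needed to put the coefficients along $\Theta^k$ in $L^2(\diff t\otimes\diff\Prob)$. Without it I would argue as follows.

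For the difference, Assumption~\ref{ass:lip} together with \eqref{eq:coupling} bounds $|\delta b|,|\delta\sigma|_F,\Lnorm{\delta h},|\delta f|$ by $L$ times the pointwise difference plus $(\E|\delta\Theta_t|_{\mathcal H}^2)^{1/2}$. This is square-integrable in $\diff t\otimes\diff\Prob$ because $\Theta^1,\Theta^2\in\Hweak$. The terminal difference is handled by \eqref{eq:lipT} and \eqref{eq:couplingT}.

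The difference of the two forward equations then has an $L^2$ drift, $L^2$ martingale integrands, and equal initial values. Doob/BDG gives $\E\sup_t|\delta X_t|^2<\infty$. Rewriting the difference of the backward equations in forward form, as in the Remark, gives $\E\sup_t|\delta Y_t|^2<\infty$. This proves the "furthermore" claim $\Theta^1-\Theta^2\in\HH$ directly.

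When Assumption~\ref{ass:integ} is in force, the Remark gives the stronger statement that each $\Theta^k\in\HH$.

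\emph{Uniqueness.} Apply Theorem~\ref{thm:stab} with system~$1$ equal to system~$2$ equal to $(b,\sigma,h,f,g)$, which satisfies Assumptions~\ref{ass:lip}--\ref{ass:mono}, and with $\xi_0^1=\xi_0^2$. Every coefficient difference $(\delta\psi)(t)$ evaluated along solution~$2$ is then identically zero, and so are $\widehat{\delta g}_T$ and $\delta\xi_0$. The right-hand side of \eqref{eq:stab} therefore vanishes, which gives
\[
\E\sup_{t\le T}|\delta X_t|^2+\E\sup_{t\le T}|\delta Y_t|^2+\E\int_0^T\big(|\delta Z_t|_F^2+\Lnorm{\delta U_t}^2\big)\diff t=0 .
\]
So $X$ and $Y$ agree up to indistinguishability, $Z$ agrees $\diff t\otimes\diff\Prob$-a.e., and $U$ agrees $\diff t\otimes\diff\Prob\otimes\nu$-a.e. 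Hence $\Theta^1=\Theta^2$ in $\HH$.

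\emph{Main obstacle.} Theorem~\ref{thm:stab} is stated for solutions in $\HH$ of systems satisfying Assumption~\ref{ass:integ}. If that assumption is genuinely dropped, I would not invoke the theorem as a black box. Instead I would rerun its core computation, namely Lemma~\ref{lem:pert} at $\lambda=1$, directly on the difference $\delta\Theta$. That computation needs only three things, each established above:
\begin{itemize}
\item Itô's product formula for $\inner{G\delta X_t}{\delta Y_t}$;
\item the zero-mean property \eqref{eq:zeromean}, whose integrability follows from $\delta\Theta\in\HH$ and BDG;
\item the expected monotonicity \eqref{eq:mono} at the diagonal inputs $(\Theta^k_t,\mu^k_t)$.
\end{itemize}
These yield $\E\int_0^T|G\delta X|^2$ or $\E\int_0^T(|\Gstar\delta Y|^2+|\Gstar\delta Z|^2_F+\Lnorm{\Gstar\delta U}^2)$ equal to zero, according to the rank case. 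The remaining components are then forced to vanish by the standard forward or backward $L^2$ estimates with Gronwall.

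One point needs checking in the jump term: along the diagonal inputs, $\Law(\Theta_t^-)=\Law(\Theta_t)$ for a.e.\ $t$, by Lemma~\ref{lem:lawflow}. This is what lets \eqref{eq:mono} be applied to the pairing $\int_E\inner{\delta h}{\Gstar\delta U}\,\diff\nu$.
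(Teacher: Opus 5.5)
Your proposal is correct, and its fallback form is exactly the paper's proof. The paper first shows $\Theta^1-\Theta^2\in\HH$ using the diagonal Lipschitz bound with \eqref{eq:coupling}, BDG for $\delta X$, and Lemma~\ref{lem:bpbsde} for $\delta Y$. It then applies Lemma~\ref{lem:pert} at $\lambda=\underline\lambda=1$ with zero forcings and equal initial data, so the right-hand side of \eqref{eq:contpert} vanishes.

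Lemma~\ref{lem:pert} is stated under precisely the hypotheses you verify. These are: each $\Theta^k$ in the $\Hweak$-type class with $X^k_T\in L^2$, which is implicit since $\Law(X^k_T)\in\mathcal P_2$ must hold for $g$ to be defined; and $\Theta^1-\Theta^2\in\HH$, with no need for Assumption~\ref{ass:integ}. So you can cite it directly rather than rerun it, and the detour through Remark~\ref{rem:cwp-class} and Theorem~\ref{thm:stab} is unnecessary.
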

\begin{proof}
Let $\Theta^k=(X^k,Y^k,Z^k,U^k)$, $k=1,2$, be two such tuples, and write
$\delta\Theta_t:=\Theta^1_t-\Theta^2_t$, $\mu^k_t:=\Law(\Theta^k_t)$ and, for
$\psi\in\{b,\sigma,h,f\}$,
\[
\delta\psi_t:=\psi(t,\Theta^1_t,\mu^1_t)-\psi(t,\Theta^2_t,\mu^2_t).
\]

By assumption
\[
\E\!\int_0^T\!\big(|X^k_t|^2+|Y^k_t|^2+|Z^k_t|_F^2+\Lnorm{U^k_t}^2\big)\diff t<\infty,
\qquad k=1,2,
\]
so by Fubini
\[
\E|X^k_t|^2+\E|Y^k_t|^2+\E|Z^k_t|_F^2+\E\Lnorm{U^k_t}^2<\infty
\qquad\text{for a.e.\ }t\in[0,T],
\]
that is, $\Theta^k_t\in L^2(\Omega,\F_t;\mathcal H)$ for a.e.\ $t$. Thus
$(\Theta^k_t,\mu^k_t)\in\mathcal D_t$ for a.e.\ $t$, and Assumption~\ref{ass:lip} is
applicable to the pair $(\Theta^1_t,\mu^1_t)$, $(\Theta^2_t,\mu^2_t)$; moreover
$W_2(\mu^1_t,\mu^2_t)^2\le\E|\delta\Theta_t|_{\mathcal H}^2$ by \eqref{eq:coupling}.
Hence Assumption~\ref{ass:lip} yields
\[
\E\big[|\delta b_t|^2+|\delta\sigma_t|_F^2+\Lnorm{\delta h_t}^2+|\delta f_t|^2\big]
\le C\,\E|\delta\Theta_t|_{\mathcal H}^2
\]
for a.e.\ $t$, where $C$ is independent of $t$. Then
\begin{equation}\label{eq:uniq-coeff}
\E\!\int_0^T\!\big[|\delta b_t|^2+|\delta\sigma_t|_F^2+\Lnorm{\delta h_t}^2
+|\delta f_t|^2\big]\diff t<\infty .
\end{equation}

We next show that $\Theta^1-\Theta^2\in\HH$. Since the two solutions have the same
initial condition, the forward equation is
\[
\delta X_t=\int_0^t\delta b_s\,\diff s+\int_0^t\delta\sigma_s\,\diff W_s
+\int_0^t\!\!\int_E\delta h_s(e)\,\Nt(\diff s,\diff e).
\]
By Cauchy--Schwarz, the BDG inequality and the compensation formula,
\[
\E\sup_{0\le t\le T}|\delta X_t|^2
\le C_T\,\E\!\int_0^T\!\big(|\delta b_s|^2+|\delta\sigma_s|_F^2
+\Lnorm{\delta h_s}^2\big)\diff s<\infty
\]
by \eqref{eq:uniq-coeff}, so $\delta X\in\Ssp{\R^n}$.

For the backward equation,
\[
\delta Y_t=\delta g_T+\int_t^T\delta f_s\,\diff s-\int_t^T\delta Z_s\,\diff W_s
-\int_t^T\!\!\int_E\delta U_s(e)\,\Nt(\diff s,\diff e),
\]
where $\delta g_T:=g(X^1_T,\Law(X^1_T))-g(X^2_T,\Law(X^2_T))$. As the terminal
conditions presuppose $\Law(X^k_T)\in\mathcal P_2(\R^n)$, we have $X^k_T\in L^2$, and
Assumption~\ref{ass:lip} with \eqref{eq:couplingT} gives
$\E|\delta g_T|^2\le C\,\E|\delta X_T|^2<\infty$. Furthermore, \eqref{eq:uniq-coeff}
gives $\E\int_0^T|\delta f_s|^2\,\diff s<\infty$. By Lemma~\ref{lem:bpbsde},
\begin{equation}\label{eq:uniq-bwd}
\E\sup_{t\le T}|\delta Y_t|^2
\le C_{\mathrm{BP}}\Big(\E|\delta g_T|^2+\E\!\int_0^T|\delta f_s|^2\,\diff s\Big)
<\infty ,
\end{equation}
therefore $\delta Y\in\Ssp{\R^m}$. By \eqref{eq:uniq-coeff} and \eqref{eq:uniq-bwd} we
have $\delta\Theta\in\HH$.

Now apply Lemma~\ref{lem:pert} of Section~\ref{sec:wp-proof} to the two solutions at
$\lambda=1$ with $\underline\lambda=1$, zero forcings and the same initial datum:
the right-hand side of \eqref{eq:contpert} vanishes, so
$\|\delta\Theta\|_{\HH}^2\le0$, whence $\Theta^1=\Theta^2$ and the uniqueness follows.
\end{proof}

\begin{remark}[Uniqueness without origin integrability]\label{rem:uniq-no-integ}
Remark~\ref{rem:cwp-class} would place each tuple $\Theta\in\Hweak$ satisfying
\eqref{eq:mvfbsdej} individually in $\HH$ if Assumption~\ref{ass:integ} were imposed. But the uniqueness only requires
Assumptions~\ref{ass:lip}--\ref{ass:mono}, as we only need to show that the difference
$\Theta^1-\Theta^2$ belongs to $\HH$, using only the Lipschitz control of coefficient
differences. However, the existence requires Assumption~\ref{ass:integ}.
\end{remark}

\begin{theorem}[Global well-posedness]\label{thm:cwp}
For every horizon $T>0$, under Assumptions~\ref{ass:integ}--\ref{ass:mono} the system
\eqref{eq:mvfbsdej} admits a solution $\Theta=(X,Y,Z,U)\in\HH$, unique by
Theorem~\ref{thm:uniq} and depending continuously on the coefficients and data in the
sense of the estimate \eqref{eq:stab} of Theorem~\ref{thm:stab}.
\end{theorem}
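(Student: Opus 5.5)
We will prove existence by the method of continuation in the coupling strength, in the style of Hu--Peng, Peng--Wu and Li--Min, with the small-coupling base case and the a priori estimate of Lemma~\ref{lem:pert} as the two ingredients. For $\lambda\in[0,1]$ and square-integrable forcings $(\phi^b,\phi^\sigma,\phi^h,\phi^f,\zeta)$, we introduce the interpolated family
\[
b^\lambda=\lambda b+(1-\lambda)(-\beta_2 G^{\!\top}\!\ldots),
\]
chosen so that at $\lambda=0$ the system decouples. Concretely, we use $b^\lambda(t,\theta,\mu)=\lambda b(t,\theta,\mu)-(1-\lambda)\beta_1\Gstar y$, with analogous $\Gstar z$ and $\Gstar u$ damping terms for $\sigma^\lambda,h^\lambda$. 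We take $f^\lambda=\lambda f+(1-\lambda)\beta_2 Gx$ and $g^\lambda=\lambda g+(1-\lambda)Gx$. Each of these is plus an additive forcing. By construction every $\lambda$-system satisfies Assumptions~\ref{ass:lip}--\ref{ass:mono} with the same $G$, with constants bounded below uniformly in $\lambda$: the monotonicity constants interpolate linearly, and the Lipschitz constant is at most $\max(L,\|G\|(1+\beta_1+\beta_2))$. Lemma~\ref{lem:pert} then gives a continuous-dependence bound with a constant uniform in $\lambda$.

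\textbf{Base case.} At $\lambda=0$ the system carries no law dependence and is linear with the structure of \cite{WuFBSDEP}, or is even decoupled. When $\beta_1>0$, the forward equation is a linear SDE in $X$ driven by $(Y,Z,U)$. When $\beta_1=0$ we instead have $m\ge n$ and $\alpha,\beta_2>0$, so the forward equation is driven by the forcings only; it is solved first, and the backward equation is then solved by the Brownian--Poisson representation (Lemma~\ref{lem:bpbsde}). For the linear coupled case we would either invoke Wu's linear result or rerun the contraction below starting from the trivially solvable $\beta_1=\beta_2=0$ decoupled system. Either way, the forced $\lambda=0$ problem is solvable in $\HH$ for arbitrary forcings.

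\textbf{Continuation step.} Suppose the $\lambda_0$-system is solvable for all forcings. For $\lambda=\lambda_0+\delta$, given $\bar\Theta\in\HH$, we solve the $\lambda_0$-system with the forcing augmented by $\delta\bigl(\psi(t,\bar\Theta_t,\bar\mu_t)-\psi^{0}(t,\bar\Theta_t)\bigr)$ for each coefficient, where $\psi^0$ is the $\lambda=0$ coefficient, and the terminal forcing augmented by $\delta(g-Gx)$ at $\bar X_T$. This defines a map $\Gamma\colon\HH\to\HH$. Applying Lemma~\ref{lem:pert} at $\lambda_0$ to two images, with the differences of forcings as data, and then using Assumption~\ref{ass:lip} together with \eqref{eq:coupling} and \eqref{eq:couplingT} to bound those differences by $\|\bar\Theta-\bar\Theta'\|_{\HH}$, gives
\[
\|\Gamma\bar\Theta-\Gamma\bar\Theta'\|_{\HH}^2\le C\,\delta^2\,\|\bar\Theta-\bar\Theta'\|_{\HH}^2,
\]
with $C$ independent of $\lambda_0$. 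Choosing $\delta_0$ with $C\delta_0^2<1$, Banach's fixed point theorem yields solvability at every $\lambda\in[\lambda_0,\lambda_0+\delta_0]$. After $\lceil1/\delta_0\rceil$ steps we reach $\lambda=1$ with zero forcing, which is \eqref{eq:mvfbsdej}. Uniqueness is Theorem~\ref{thm:uniq} and stability is Theorem~\ref{thm:stab}. Before this, we must check that $\Gamma$ maps into $\HH$, using Assumption~\ref{ass:integ} and Remark~\ref{rem:cwp-class}, and that the law flows of $\bar\Theta$ are Borel, by Lemma~\ref{lem:lawflow}, so that the forcings are progressively measurable or predictable.

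\textbf{Main obstacle.} The main obstacle is the uniformity in $\lambda_0$ of the constant in Lemma~\ref{lem:pert}. The subtle point is that the forcings are evaluated along frozen inputs $(\bar\Theta,\Law(\bar\Theta))$, while monotonicity is only diagonal and in expectation. We must therefore make sure that the law-dependent increments appear only as data terms, which are controlled by Lipschitz estimates in expectation, and never inside the monotone pairing. A second concern is the rank dichotomy. The interpolation must preserve $\beta_1>0$ when $m<n$ and $\alpha,\beta_2>0$ when $m>n$, which is why the $\lambda=0$ coefficients are built with the same $\beta_1,\beta_2$ and with $g^0=Gx$ (giving $\alpha=1$ at $\lambda=0$). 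If the resulting interpolated constants $\min(\alpha,1)$, $\beta_1$ and $\beta_2$ satisfy the sign conditions of Assumption~\ref{ass:mono} at every $\lambda$, the plan closes.
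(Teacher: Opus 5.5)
\textbf{Gap: the base case when $\beta_1>0$.} Your continuation mechanics are sound. The interpolated coefficients $\lambda\psi+(1-\lambda)\psi^0$ satisfy Assumptions~\ref{ass:lip}--\ref{ass:mono} with the $\lambda$-independent constants $(\min\{\alpha,1\},\beta_1,\beta_2)$ and a uniform Lipschitz constant. Hence Lemma~\ref{lem:pert}, applied with $\lambda=\underline\lambda=1$ to the system $\psi^{\lambda_0}$, gives a step $\delta_0$ independent of $\lambda_0$, and the fixed point of $\Gamma$ solves the $(\lambda_0+\delta)$-system.

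The problem is the base. When $\beta_1>0$, your $\lambda=0$ system has forward coefficients $-\beta_1\Gstar Y$, $-\beta_1\Gstar Z$, $-\beta_1\Gstar U$ (plus forcings) and backward data $GX_T$, $\beta_2GX$. It is therefore a genuinely fully coupled FBSDE with jumps, and you never show it is solvable for arbitrary forcings.

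\emph{Your second remedy fails as stated.} The $\beta_1=\beta_2=0$ system violates $\beta_1+\beta_2>0$. Along the path $s(\beta_1,\beta_2)$, the closure in Step~5 of Lemma~\ref{lem:pert} divides by $\beta_1$, resp.\ $\min\{\alpha,\beta_2\}$, which here are $s\beta_1$, resp.\ $\min\{1,s\beta_2\}$. So the constant blows up like $1/s$ as $s\downarrow0$, and a fixed-increment continuation cannot start at $s=0$. Leaving the decoupled endpoint requires a separate Lipschitz-only small-coupling contraction, which is exactly the paper's Lemma~\ref{lem:small}.

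\emph{Your first remedy only relocates the problem.} It imports a complete well-posedness theorem for non-mean-field FBSDEJs, itself proved by continuation. You would still have to check that it covers $\sigma$-finite $\nu$ with $\nu(E)=\infty$, which is precisely the regime this theorem is about.

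\textbf{Repair and comparison.} Within your framework, choose the base by rank case so that it decouples; Assumption~\ref{ass:mono} always allows this.
\begin{enumerate}
\item If $n\ge m$ and $\beta_1>0$, take $b^0=-\beta_1\Gstar y$, $\sigma^0=-\beta_1\Gstar z$, $h^0=-\beta_1\Gstar u$, $f^0=0$, $g^0=0$. At $\lambda=0$ the backward equation no longer involves $X$: solve it by the representation property, then read off $X$. The interpolated constants stay above $(0,\beta_1,0)$, which is all Case~A of Lemma~\ref{lem:pert} uses.
\item Otherwise $m\ge n$ and $\alpha,\beta_2>0$. Drop the damping even if $\beta_1>0$, taking $b^0=\sigma^0=h^0=0$, $f^0=\beta_2Gx$, $g^0=Gx$. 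The base is then decoupled forward-first, and the constants stay above $(\min\{\alpha,1\},0,\beta_2)$, which suffices for Case~B.
\end{enumerate}

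With this change your route is a valid alternative to the paper's. The paper scales every coefficient by $\lambda$, so its $\lambda=0$ system is trivially decoupled in all rank cases at once. The price is that its monotonicity constants degenerate like $\lambda$. It therefore needs Lemma~\ref{lem:small} on $[0,\lambda_*]$ and Lemma~\ref{lem:pert} with $\underline\lambda=\lambda_*$ on $[\lambda_*,1]$. A rank-adapted interpolation avoids the small-coupling lemma and gives one step size on all of $[0,1]$, at the cost of a case split in the choice of base.
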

\noindent The proof, by monotone continuation from a small-coupling base case, is in
Section~\ref{sub:proofcwp}.

\begin{corollary}[Robustness of well-posedness]\label{cor:robust}
Let $m=n$, and let the baseline system $(b_0,\sigma_0,h_0,f_0,g)$, with the initial
datum $\xi_0$, satisfy Assumptions~\ref{ass:integ}--\ref{ass:mono} with margin
$c_{\mathrm{diss}}>0$ in the unweighted form \eqref{eq:monomarginu}. Let the perturbations
$p_b,p_\sigma,p_h,p_f$ satisfy the corresponding origin-integrability and diagonal
Lipschitz conditions of Assumptions~\ref{ass:integ}--\ref{ass:lip}, and, for some
$c_{\mathrm{pert}}\ge0$, for a.e.\ $t\in[0,T]$ and all
$(\Theta,\mu),(\Theta',\mu')\in\mathcal D_t$,
\begin{multline}\label{eq:pertpair}
\E\!\Big[\inner{\delta p_b}{\Gstar\delta Y}+\inner{\delta p_\sigma}{\Gstar\delta Z}_F
+\inner{-\delta p_f}{G\delta X}+\!\int_E\!\inner{\delta p_h(e)}{\Gstar\delta U(e)}\nu(\diff e)\Big]\\
\le c_{\mathrm{pert}}\,\E\big[|\delta X|^2+|\delta Y|^2+|\delta Z|_F^2+\Lnorm{\delta U}^2\big]
\end{multline}
If
$c_{\mathrm{pert}}<c_{\mathrm{diss}}$, the perturbed system
\[
(b,\sigma,h,f,g):=(b_0+p_b,\ \sigma_0+p_\sigma,\ h_0+p_h,\ f_0+p_f,\ g)
\]
satisfies, with the same initial datum $\xi_0$, Assumptions~\ref{ass:integ}--\ref{ass:mono}
with the same $G,\alpha,\beta_1,\beta_2$ and margin $c_{\mathrm{diss}}-c_{\mathrm{pert}}$. By
Theorems~\ref{thm:stab}, \ref{thm:uniq} and~\ref{thm:cwp} it is well-posed.

If, in addition, the perturbations are independent of $\theta$ and $W_2$-Lipschitz,
\begin{equation}\label{eq:pertlip}
|\delta p_b|+|\delta p_\sigma|_F+\Lnorm{\delta p_h}+|\delta p_f|\ \le\ L_p\,W_2(\mu,\mu')
\end{equation}
for a.e.\ $t\in[0,T]$ and all $(\Theta,\mu),(\Theta',\mu')\in\mathcal D_t$, then
\eqref{eq:pertpair} holds with
$c_{\mathrm{pert}}=\|G\|L_p$.
\end{corollary}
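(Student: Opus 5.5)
The corollary is verified assumption by assumption for the perturbed system, followed by a direct estimate for the special case \eqref{eq:pertlip}.

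\emph{Assumptions~\ref{ass:integ} and~\ref{ass:lip}.} Origin integrability is additive: $b(\cdot,0,\delta_0)=b_0(\cdot,0,\delta_0)+p_b(\cdot,0,\delta_0)\in\Lsp{\R^n}$, and similarly for $\sigma,h,f$. The initial datum $\xi_0$ and the terminal function $g$ are unchanged. For the diagonal Lipschitz condition, the triangle inequality gives $|\delta b|\le|\delta b_0|+|\delta p_b|$, and likewise for the other coefficients. Hence the perturbed system satisfies Assumption~\ref{ass:lip} with constant $L+L'$, where $L'$ is the Lipschitz constant of the perturbations. Assumption~\ref{ass:G} concerns only $G$ and carries over unchanged.

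\emph{Assumption~\ref{ass:mono}.} The terminal condition involves only $g$, so it is inherited. For the interior condition, fix $(\Theta,\mu),(\Theta',\mu')\in\mathcal D_t$. The pairing on the left of \eqref{eq:mono} is linear in the coefficient differences, so it splits into the baseline pairing plus the perturbation pairing. Bound the baseline pairing by \eqref{eq:monomarginu} and the perturbation pairing by \eqref{eq:pertpair}. The sum is at most
\[
-\beta_1\E\big[|\Gstar\delta Y|^2+|\Gstar\delta Z|_F^2+\Lnorm{\Gstar\delta U}^2\big]-\beta_2\E|G\delta X|^2-(c_{\mathrm{diss}}-c_{\mathrm{pert}})\,\E\big[|\delta X|^2+|\delta Y|^2+|\delta Z|_F^2+\Lnorm{\delta U}^2\big].
\]
This is \eqref{eq:monomarginu} with the same $G,\alpha,\beta_1,\beta_2$ and margin $c_{\mathrm{diss}}-c_{\mathrm{pert}}>0$. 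Since the margin term is non-positive, it can be dropped, which gives \eqref{eq:mono}. The constant conditions on $\alpha,\beta_1,\beta_2$ are those of the baseline, so they still hold. Well-posedness then follows by invoking Theorems~\ref{thm:stab}, \ref{thm:uniq} and~\ref{thm:cwp} directly.

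\emph{Special case \eqref{eq:pertlip}.} This is the only step needing an estimate, and it is still elementary. Apply Cauchy--Schwarz pointwise to each term of the pairing, using $|\Gstar v|\le\|G\|\,|v|$ in the Euclidean, Frobenius and $\Lnu$ norms and $|Gx|\le\|G\|\,|x|$. Then apply Cauchy--Schwarz once more, on $\R^4$, to the sum of the four products:
\[
|\delta p_b||\delta Y|+|\delta p_\sigma|_F|\delta Z|_F+|\delta p_f||\delta X|+\Lnorm{\delta p_h}\Lnorm{\delta U}\le\big(|\delta p_b|+|\delta p_\sigma|_F+\Lnorm{\delta p_h}+|\delta p_f|\big)\,|\delta\Theta|_{\mathcal H}.
\]
Here I used $\sqrt{\sum a_i^2}\le\sum a_i$ for the coefficient factor. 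By \eqref{eq:pertlip} this is at most $L_pW_2(\mu,\mu')\,|\delta\Theta|_{\mathcal H}$. Multiply by $\|G\|$ and take expectations. Since $W_2(\mu,\mu')$ is deterministic, Jensen gives $\E|\delta\Theta|_{\mathcal H}\le(\E|\delta\Theta|_{\mathcal H}^2)^{1/2}$, and \eqref{eq:coupling} bounds $W_2(\mu,\mu')$ by the same quantity. The product is therefore at most $\|G\|L_p\,\E|\delta\Theta|_{\mathcal H}^2$, so \eqref{eq:pertpair} holds with $c_{\mathrm{pert}}=\|G\|L_p$.

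Finally, perturbations that do not depend on $\theta$ and satisfy \eqref{eq:pertlip} also satisfy the diagonal Lipschitz requirement of Assumption~\ref{ass:lip}, since $L_pW_2(\mu,\mu')$ is dominated by the right-hand side of that assumption with $L=L_p$. I anticipate no substantial obstacle; the only care needed is in splitting the pairing correctly along the diagonal.
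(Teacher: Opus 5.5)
Your proposal is correct and follows the paper's proof. You use additivity of origin values and Lipschitz constants, split the interior pairing into a baseline bracket bounded by \eqref{eq:monomarginu} and a perturbation bracket bounded by \eqref{eq:pertpair}, and for the special case combine Cauchy--Schwarz with \eqref{eq:pertlip} and \eqref{eq:coupling}. The only difference is cosmetic: you apply Cauchy--Schwarz pointwise and then Jensen, whereas the paper first factors the deterministic differences $\delta p_\psi$ out of the expectation.
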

\begin{proof}
Origin values add in $L^2$ and Lipschitz constants add, so the perturbed system satisfies
Assumptions~\ref{ass:integ}--\ref{ass:lip}; Assumption~\ref{ass:G} and the terminal
inequality of Assumption~\ref{ass:mono} concern only $G$ and $g$, which are unchanged.
For the interior inequality, $\delta\psi=\delta\psi_0+\delta p_\psi$,
$\psi\in\{b,\sigma,h,f\}$, along any pair of diagonal inputs.
The pairing splits into a baseline bracket and a perturbation bracket, bounded by
\eqref{eq:monomarginu} and \eqref{eq:pertpair} respectively, off the union of their two
Lebesgue-null exceptional time sets:
\[
\begin{aligned}
&\E\!\Big[\inner{\delta b}{\Gstar\delta Y}+\inner{\delta\sigma}{\Gstar\delta Z}_F
+\inner{-\delta f}{G\delta X}+\!\int_E\!\inner{\delta h(e)}{\Gstar\delta U(e)}\nu(\diff e)\Big]\\
&\quad=\E\!\Big[\inner{\delta b_0}{\Gstar\delta Y}+\inner{\delta\sigma_0}{\Gstar\delta Z}_F
+\inner{-\delta f_0}{G\delta X}+\!\int_E\!\inner{\delta h_0(e)}{\Gstar\delta U(e)}\nu(\diff e)\Big]\\
&\qquad+\E\!\Big[\inner{\delta p_b}{\Gstar\delta Y}+\inner{\delta p_\sigma}{\Gstar\delta Z}_F
+\inner{-\delta p_f}{G\delta X}+\!\int_E\!\inner{\delta p_h(e)}{\Gstar\delta U(e)}\nu(\diff e)\Big]\\
&\quad\le-\beta_1\E\big[|\Gstar\delta Y|^2+|\Gstar\delta Z|_F^2+\Lnorm{\Gstar\delta U}^2\big]-\beta_2\E|G\delta X|^2\\
&\qquad-(c_{\mathrm{diss}}-c_{\mathrm{pert}})\,\E\big[|\delta Y|^2+|\delta Z|_F^2+\Lnorm{\delta U}^2+|\delta X|^2\big] ,
\end{aligned}
\]
which is \eqref{eq:monomarginu} with the baseline constants and margin
$c_{\mathrm{diss}}-c_{\mathrm{pert}}$.

We now prove that \eqref{eq:pertlip} implies \eqref{eq:pertpair} with
$c_{\mathrm{pert}}=\|G\|L_p$. Write
\[
D^2:=\E\big[|\delta X|^2+|\delta Y|^2+|\delta Z|_F^2+\Lnorm{\delta U}^2\big].
\]
The differences $\delta p_\psi$,
$\psi\in\{b,\sigma,h,f\}$, are deterministic, so they factor out of the expectations.
Cauchy--Schwarz in each pairing, then \eqref{eq:pertlip} and \eqref{eq:coupling}, give
\[
\begin{aligned}
&\E\!\Big[\inner{\delta p_b}{\Gstar\delta Y}+\inner{\delta p_\sigma}{\Gstar\delta Z}_F
+\inner{-\delta p_f}{G\delta X}+\!\int_E\!\inner{\delta p_h(e)}{\Gstar\delta U(e)}\nu(\diff e)\Big]\\
&\quad\le\|G\|\,D\,\bigl(|\delta p_b|+|\delta p_\sigma|_F+\Lnorm{\delta p_h}+|\delta p_f|\bigr)\\
&\quad\le\|G\|\,D\,L_p\,W_2(\mu,\mu')\\
&\quad\le\|G\|L_p\,D^2 ,
\end{aligned}
\]
which is \eqref{eq:pertpair} with $c_{\mathrm{pert}}=\|G\|L_p$.
\end{proof}
\noindent The margin hypothesis is demanding in the jump channel: by the reduction of
Remark~\ref{rem:beta1-dich}, a margin in \eqref{eq:monomarginu} forces every channel of
the baseline to be strictly dissipative. An additive $h_0=h_0(t,x,e)$ can therefore never serve as a baseline, and
jump feedback entering through finitely many functionals of $u$ supplies no margin
either; a margin needs feedback dissipative on the whole of $\Lnu$, as in
Examples~\ref{ex:concrete}(i) and~\ref{ex:dealer}.

\begin{remark}[General rank]\label{rem:generalrank}
The restriction to $m=n$ reflects only the unweighted bookkeeping: the same splitting of
the pairing into baseline and perturbation brackets, run in the weighted quantities of
\eqref{eq:monomargin}, gives the identical conclusion for arbitrary $m,n$ when the margin
and \eqref{eq:pertpair} are both recorded in those quantities.
\end{remark}

\begin{proposition}[Strictness of expected diagonal monotonicity]\label{prop:strict}
Let $n=m=1$, $d\ge1$ and $G=1$, and fix $\beta_1>0$. For every $0<\varepsilon<\beta_1$,
the drift
\begin{equation}\label{eq:bmean}
b(t,\theta,\mu):=-\beta_1 y+\varepsilon\,g_0\bigl(y-m_Y(\mu)\bigr),\qquad
g_0(r):=\max\{-1,\min\{1,r\}\},
\end{equation}
with $m_Y(\mu):=\int_{\mathcal H}y\,\mu(\diff\theta)$ the $Y$-marginal mean, is
free-measure $W_2$-Lipschitz with constant $\beta_1+\varepsilon$ and satisfies, along
all $(\Theta,\mu),(\Theta',\mu')\in\mathcal D_t$,
\begin{equation}\label{eq:diagdrift}
\E\inner{\delta b}{\delta Y}\ \le\ -(\beta_1-\varepsilon)\,\E|\delta Y|^2 ;
\end{equation}
yet for every $\beta\ge0$ the pointwise bound
$\inner{\delta b}{\delta y}\le-\beta\,|\delta y|^2$ fails at a pair of free inputs with
$\theta\in\operatorname{supp}\mu$ and $\theta'\in\operatorname{supp}\mu'$.
\end{proposition}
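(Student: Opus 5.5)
The proof splits into the three claims of the statement: the free-measure Lipschitz bound, the diagonal inequality \eqref{eq:diagdrift}, and a pointwise counterexample. Throughout I use two facts about $g_0$: it is $1$-Lipschitz and bounded by $1$.

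\emph{Lipschitz bound.} For arbitrary $\theta,\theta'\in\mathcal H$ and $\mu,\mu'\in\mathcal P_2(\mathcal H)$ I will bound the difference directly:
\[
|\delta b|\le\beta_1|\delta y|+\varepsilon\bigl|\delta y-(m_Y(\mu)-m_Y(\mu'))\bigr|\le(\beta_1+\varepsilon)|\delta y|+\varepsilon\,|m_Y(\mu)-m_Y(\mu')| .
\]
The map $\theta\mapsto y$ is $1$-Lipschitz from $\mathcal H$ to $\R$. Hence \eqref{eq:lipfun} gives $|m_Y(\mu)-m_Y(\mu')|\le W_2(\mu,\mu')$. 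Together these give the bound $(\beta_1+\varepsilon)\bigl(|\delta y|+W_2(\mu,\mu')\bigr)$, which is the claimed constant.

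\emph{Diagonal inequality.} On $\mathcal D_t$ the law argument is the law of the input, so $m_Y(\mu)=\E Y$ and $m_Y(\mu')=\E Y'$. The difference of means is therefore $\E\delta Y$. I expand
\[
\E\inner{\delta b}{\delta Y}=-\beta_1\E|\delta Y|^2+\varepsilon\,\E\bigl[\bigl(g_0(Y-\E Y)-g_0(Y'-\E Y')\bigr)\delta Y\bigr].
\]
The $1$-Lipschitz property of $g_0$ bounds the bracket by $|\delta Y-\E\delta Y|$. Cauchy--Schwarz then bounds the second term by $\varepsilon\,(\E|\delta Y-\E\delta Y|^2)^{1/2}(\E|\delta Y|^2)^{1/2}$. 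The key step is that the centred second moment is at most the uncentred one: $\operatorname{Var}(\delta Y)\le\E|\delta Y|^2$. This bounds the second term by $\varepsilon\E|\delta Y|^2$, which gives \eqref{eq:diagdrift}. The centring is available only because of the diagonal structure, and that is exactly where a free measure argument would break the argument.

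\emph{Pointwise failure.} It suffices to produce free inputs with $\inner{\delta b}{\delta y}>0$, since that violates the bound for every $\beta\ge0$ at once. I fix $a\in\R$ and $0<\eta<2\varepsilon/\beta_1$, and set $y=a$ and $y'=a-\eta$; all other components are zero. I then choose two-point laws
\[
\mu=\tfrac12\delta_{\theta}+\tfrac12\delta_{\tilde\theta},\qquad \mu'=\tfrac12\delta_{\theta'}+\tfrac12\delta_{\tilde\theta'},
\]
with $\tilde y=a-2$ and $\tilde y'=a-\eta+2$. These give $m_Y(\mu)=a-1$ and $m_Y(\mu')=a-\eta+1$. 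Both laws lie in $\mathcal P_2$, and $\theta\in\operatorname{supp}\mu$, $\theta'\in\operatorname{supp}\mu'$ by construction. Now $y-m_Y(\mu)=1$ and $y'-m_Y(\mu')=-1$, so $g_0$ takes the values $1$ and $-1$. Hence $\delta b=-\beta_1\eta+2\varepsilon$ and
\[
\inner{\delta b}{\delta y}=\eta(2\varepsilon-\beta_1\eta)>0 .
\]

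The only delicate point is this last construction: meeting the support requirement while making the means move oppositely to the states. The two-point laws above are my first choice for that. The remaining steps are routine once the diagonal identity $m_Y(\mu)=\E Y$ is used.
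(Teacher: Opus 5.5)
Your proposal is correct and follows the paper's proof step for step. It uses the same two ingredients: $1$-Lipschitzness of $g_0$ combined with \eqref{eq:lipfun} for the free-measure bound, and, for \eqref{eq:diagdrift}, the identity $m_Y(\mu)=\E Y$ followed by Cauchy--Schwarz with $\E|\delta Y-\E\,\delta Y|^2\le\E|\delta Y|^2$. Your two-point-law counterexample is also the paper's, with your $a,\eta$ in place of its $y,t$, and it yields $\inner{\delta b}{\delta y}=\eta(2\varepsilon-\beta_1\eta)>0$ for $0<\eta<2\varepsilon/\beta_1$.
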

\begin{proof}
\emph{Free-measure Lipschitz.} Since $g_0$ is $1$-Lipschitz and by \eqref{eq:lipfun},
\[
\begin{aligned}
|\delta b|&\ \le\ \beta_1|\delta y|
+\varepsilon\bigl|\bigl(y-m_Y(\mu)\bigr)-\bigl(y'-m_Y(\mu')\bigr)\bigr|\\
&\ \le\ \beta_1|\delta y|+\varepsilon\bigl(|\delta y|+|m_Y(\mu)-m_Y(\mu')|\bigr)\\
&\ \le\ (\beta_1+\varepsilon)\bigl(|\delta y|+W_2(\mu,\mu')\bigr).
\end{aligned}
\]

\emph{Expected diagonal bound.} Along diagonal inputs, $m_Y(\mu)=\E Y$ and
$m_Y(\mu')=\E Y'$. Since $g_0$ is $1$-Lipschitz and its arguments differ by
$(Y-\E Y)-(Y'-\E Y')=\delta Y-\E\,\delta Y$, Cauchy--Schwarz and
$\E|\delta Y-\E\,\delta Y|^2=\E|\delta Y|^2-|\E\,\delta Y|^2\le\E|\delta Y|^2$ give
\[
\begin{aligned}
\E\inner{\delta b}{\delta Y}
&\ =\ -\beta_1\E|\delta Y|^2
+\varepsilon\,\E\bigl[\bigl(g_0(Y-\E Y)-g_0(Y'-\E Y')\bigr)\,\delta Y\bigr]\\
&\ \le\ -\beta_1\E|\delta Y|^2
+\varepsilon\bigl(\E|\delta Y-\E\,\delta Y|^2\bigr)^{1/2}\bigl(\E|\delta Y|^2\bigr)^{1/2}\\
&\ \le\ -(\beta_1-\varepsilon)\,\E|\delta Y|^2 ,
\end{aligned}
\]
which is \eqref{eq:diagdrift}.

\emph{Pointwise failure on the supports.} Take $\theta=(0,y,0,0)$ and
$\theta'=(0,y-t,0,0)$ with $t>0$, and the two-point laws
\[
\mu:=\tfrac12\delta_{(0,y,0,0)}+\tfrac12\delta_{(0,y-2,0,0)},\qquad
\mu':=\tfrac12\delta_{(0,y-t,0,0)}+\tfrac12\delta_{(0,y-t+2,0,0)},
\]
so that $\theta\in\operatorname{supp}\mu$, $\theta'\in\operatorname{supp}\mu'$, and
$m_Y(\mu)=y-1$, $m_Y(\mu')=y-t+1$. The two arguments of $g_0$ are then $\pm1$, and
\[
\begin{aligned}
y-m_Y(\mu)&=1, \qquad (y-t)-m_Y(\mu')=-1, \qquad \delta y=t,\\
\delta b&=-\beta_1t+\varepsilon\bigl(g_0(1)-g_0(-1)\bigr)=-\beta_1t+2\varepsilon,\\
\inner{\delta b}{\delta y}&=-\beta_1t^2+2\varepsilon t>0
\qquad\text{for }0<t<2\varepsilon/\beta_1,
\end{aligned}
\]
incompatible with every nonpositive bound $-\beta t^2$, $\beta\ge0$.
\end{proof}

 \section{Proof of well-posedness}
\label{sec:wp-proof}

The route is: backward $L^2$ estimate
(Lemma~\ref{lem:bpbsde}) $\to$ small-coupling base case (Lemma~\ref{lem:small}) $\to$
uniform stability on $[\lambda_*,1]$ (Lemma~\ref{lem:pert}), which at $\lambda=1$ gives
Theorems~\ref{thm:stab} and~\ref{thm:uniq} $\to$ fixed-increment continuation
(Lemma~\ref{lem:cont}) $\to$ Theorem~\ref{thm:cwp}, the continuation being along the
coupling strength, in the tradition of Peng and Wu~\cite{PengWu}. The central a priori
estimate rests on a mixed $G$-energy identity for $\inner{G\,\delta X_t}{\delta Y_t}$,
whose compensated-Poisson cross-variation produces a coercive $\Lnu$ jump term controlled
by the $G$-monotonicity assumption. The continuation is in the coupling strength, not the
horizon: shortening the horizon does not make the frozen-coefficient map contractive,
since the $(z,u)$-dependence of $\sigma,h$ and the full-tuple law coupling enter the
contraction constant with no $T$-factor.

Inside $\diff t$-integrals we do not distinguish $X_t,Y_t$ from $X_{t-},Y_{t-}$:
c\`adl\`ag paths have only countably many jumps on compact intervals, so
$\Theta_t^-=\Theta_t$ $\Prob$-a.s.\ and $\mu_t^-=\mu_t$ for a.e.\ $t$
(Lemma~\ref{lem:lawflow}).

Throughout this section, $C$ denotes a positive constant, not necessarily the same at each
occurrence, depending only on $T,L,\|G\|,c_G,\alpha,\beta_1,\beta_2$ and the BDG constant
$c_{\mathrm{BDG}}$, unless a different dependence is stated; recurring constants are
subscripted ($C_0$, $C_{\mathrm{BP}}$, $C_B$, \dots) and fixed where they are introduced.

It\^o's formula for L\'evy-type stochastic integrals, and the product form obtained by
applying it to the bilinear map $(x,y)\mapsto\inner{Gx}{y}$, are those
of~\cite[\S4.4]{Applebaum}, stated for the Brownian--Poisson setting fixed above. The
maximal estimates use the Burkholder--Davis--Gundy inequality at $p=1$ in
quadratic-variation form (\cite{HeWangYan}, chapter on $H^1$ and BMO); valid for
c\`adl\`ag local martingales, it covers the compensated-Poisson martingales below without
a separate jump statement. By localisation and dominated convergence, a c\`adl\`ag local
martingale whose running supremum is integrable has zero-mean increments (the
\emph{supremum criterion} below).

For $\lambda\in[0,1]$ we work throughout with the \emph{$\lambda$-scaled, forced}
system: for a square-integrable forcing
$q=(q^b,q^\sigma,q^h,q^f,q^g_T)$ with
\[
\begin{aligned}
q^b&\in L^2(\Omega\times[0,T];\R^n),
&\qquad q^\sigma&\in L^2(\Omega\times[0,T];\R^{n\times d}),\\
q^h&\in L^2(\Omega\times[0,T]\times E;\R^n),
&\qquad q^f&\in L^2(\Omega\times[0,T];\R^m),\\
q^g_T&\in L^2(\Omega,\F_T;\R^m),
\end{aligned}
\]
where $q^b,q^\sigma,q^f$ are progressively
measurable and $q^h$ is predictable ($\mathcal P\otimes\mathcal E$-measurable as
in~\eqref{eq:fubiniiso}, square-integrable for $\diff\Prob\otimes\diff t\otimes\nu$),
\begin{equation}\label{eq:forced}
\left\{\;\begin{aligned}
X_t&=\xi_0+\lambda\!\int_0^t b(s,\Theta_s,\mu_s)\,\diff s
+\lambda\!\int_0^t\sigma(s,\Theta_s,\mu_s)\,\diff W_s\\
&\hspace{10.2em}+\lambda\!\int_0^t\!\!\int_E h(s,\Theta_s^-,e,\mu_s^-)\,\Nt(\diff s,\diff e)\\
&\hspace{5.4em}+\int_0^t q_s^b\,\diff s+\int_0^t q_s^\sigma\,\diff W_s
+\int_0^t\!\!\int_E q_s^h(e)\,\Nt(\diff s,\diff e),\\[2.2ex]
Y_t&=\lambda\,g(X_T,\Law(X_T))+\lambda\!\int_t^T f(s,\Theta_s,\mu_s)\,\diff s
+q_T^g+\int_t^T q_s^f\,\diff s\\
&\hspace{12.6em}-\int_t^T Z_s\,\diff W_s-\int_t^T\!\!\int_E U_s(e)\,\Nt(\diff s,\diff e).
\end{aligned}\right.
\end{equation}
Call the system \eqref{eq:forced} \emph{well posed at} $\lambda$ --- written
$\mathsf{WP}(\lambda)$ --- if it has a unique solution in $\HH$ for \emph{every} such
forcing $q$. The unforced target is
$\mathsf{WP}(1)$ with $q\equiv0$, so it suffices to prove that
$\mathsf{WP}(1)$ holds.

By Assumptions~\ref{ass:integ}--\ref{ass:lip}, the coefficient processes~\eqref{eq:coefmeas}
along any $\Theta\in\HH$ are square-integrable: for $\psi\in\{b,\sigma,h,f\}$, in the norms of
Assumption~\ref{ass:lip},
\begin{equation}\label{eq:coeffint}
\E\!\int_0^T\!|\psi(s,\Theta_s,\mu_s)|^2\,\diff s
\le C\,\E\!\int_0^T\!\Big(|X_s|^2+|Y_s|^2+|Z_s|_F^2+\Lnorm{U_s}^2
+|\psi(s,0,\delta_0)|^2\Big)\diff s,
\end{equation}
by that Lipschitz bound together with the coupling estimate \eqref{eq:coupling} at
$\mu'=\delta_0$. For the terminal function the analogous bound
\begin{equation}\label{eq:termg}
\E|g(X_T,\Law(X_T))|^2\le C\big(\E|X_T|^2+|g(0,\delta_0)|^2\big)
\end{equation}
follows in the same way from \eqref{eq:lipT} and \eqref{eq:couplingT}.

\subsection{A Brownian--Poisson \texorpdfstring{$L^2$}{L2} estimate}

\begin{lemma}[Brownian--Poisson $L^2$ estimate]\label{lem:bpbsde}
Let $\delta Y$ be a c\`adl\`ag adapted process and let $\delta Z\in\HW$, $\delta U\in\HN$
satisfy
\begin{equation}\label{eq:dYlin}
\delta Y_t=\delta\xi+\int_t^T\phi_s\,\diff s-\int_t^T\delta Z_s\,\diff W_s-\int_t^T\!\int_E\delta U_s(e)\,\Nt(\diff s,\diff e)
\end{equation}
with $\delta\xi\in L^2(\F_T)$ and $\phi\in L^2(\Omega\times[0,T])$. Then
$\delta Y\in\Ssp{\R^m}$, and there is a constant
$C_{\mathrm{BP}}$, depending only on $T$ and $c_{\mathrm{BDG}}$, with
\[
\E\sup_{t\le T}|\delta Y_t|^2+\E\!\int_0^T\!\big(|\delta Z_s|_F^2+\Lnorm{\delta U_s}^2\big)\diff s
\le C_{\mathrm{BP}}\Big(\E|\delta\xi|^2+\E\!\int_0^T|\phi_s|^2\diff s\Big).
\]
\end{lemma}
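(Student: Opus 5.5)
The approach is the standard one for linear BSDEs with given generator process, using the predictable representation property and It\^o's isometry.

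\emph{Step 1: conditional-expectation representation and the integrand bound.} Taking $t=0$ in \eqref{eq:dYlin} and rearranging, the terminal value satisfies
\[
\delta\xi+\int_0^T\phi_s\,\diff s=\delta Y_0+\int_0^T\delta Z_s\,\diff W_s+\int_0^T\!\!\int_E\delta U_s(e)\,\Nt(\diff s,\diff e).
\]
Because $\delta Z\in\HW$ and $\delta U\in\HN$, both stochastic integrals are square-integrable martingales, by It\^o's isometry for the Brownian integral and the compensation formula \eqref{eq:compid} for the Poisson one. Conditioning \eqref{eq:dYlin} on $\F_t$ then gives
\[
\delta Y_t=\E\Big[\delta\xi+\int_t^T\phi_s\,\diff s\,\Big|\,\F_t\Big].
\]
In particular $\delta Y_0$ is $\F_0$-measurable and lies in $L^2$. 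Write $M_t:=\E[\delta\xi+\int_0^T\phi_s\diff s\mid\F_t]$. By the representation above,
\[
M_t-M_0=\int_0^t\delta Z_s\,\diff W_s+\int_0^t\!\!\int_E\delta U_s(e)\,\Nt(\diff s,\diff e).
\]
Independence of $W$ and $N$ makes the cross term vanish, so the isometry gives
\[
\E\int_0^T\big(|\delta Z_s|_F^2+\Lnorm{\delta U_s}^2\big)\diff s=\E|M_T-M_0|^2\le\E|M_T|^2\le 2\E|\delta\xi|^2+2T\,\E\int_0^T|\phi_s|^2\diff s,
\]
the last inequality by Cauchy--Schwarz on the time integral.

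\emph{Step 2: the supremum of $\delta Y$.} Write $\delta Y_t=M_t-\int_0^t\phi_s\,\diff s$. Then
\[
\E\sup_{t\le T}|\delta Y_t|^2\le 2\E\sup_{t\le T}|M_t|^2+2T\,\E\int_0^T|\phi_s|^2\diff s.
\]
Doob's $L^2$ maximal inequality gives $\E\sup_t|M_t|^2\le4\E|M_T|^2$, which is controlled as in Step 1. Equivalently, one can apply BDG to the two martingale parts and absorb the result into Step 1's bound; this is where $c_{\mathrm{BDG}}$ enters if one prefers that route. Combining the two steps yields the estimate with, for instance, $C_{\mathrm{BP}}=C(1+T)^2$, depending only on $T$ and $c_{\mathrm{BDG}}$.

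\emph{Main obstacle.} The delicate point is the identification in Step 1. We are \emph{given} that $\delta Z,\delta U$ are square-integrable, so the martingale property follows without any localisation; no uniqueness from the predictable representation property is needed, only the identity itself.

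One must also check that the jump integral's isometry and the independence of the Brownian and Poisson integrals hold without assuming $\nu(E)<\infty$. Both follow from the $\sigma$-finite compensation formula \eqref{eq:compid}, since the quadratic covariation of the continuous and purely discontinuous martingales is zero. Consequently, $\nu(E)$ never enters the constant.
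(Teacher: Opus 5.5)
Your proof is correct, and it takes a genuinely different and more elementary route than the paper.

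\textbf{The paper's route.} The paper applies It\^o's formula to $e^{t}|\delta Y_t|^2$. It then has to check separately that three local martingales have zero mean. The quadratic jump term $\int\!\!\int e^{s}|\delta U_s(e)|^2\,\Nt(\diff s,\diff e)$ need not be square-integrable when $\nu(E)=\infty$, so it is handled through the compensation formula rather than BDG. Young's inequality then bounds $\sup_t\E|\delta Y_t|^2$ together with the integrand norms. Finally the paper writes the equation forward from $\delta Y_0$ and applies BDG to reach $\E\sup_t|\delta Y_t|^2$.

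\textbf{Your route.} You exploit that $\phi$ is a given process. The equation itself exhibits $M_t=\delta Y_t+\int_0^t\phi_s\,\diff s$ as the square-integrable martingale $\E[\delta\xi+\int_0^T\phi_s\,\diff s\mid\F_t]$, whose increments are exactly $\int\delta Z\,\diff W+\int\!\!\int\delta U\,\Nt$. Orthogonality of martingale increments and the isometry then give the integrand bound at once, and Doob's inequality gives the supremum bound, which also yields membership in $\Ssp{\R^m}$. There is no It\^o formula and no zero-mean verification. The constant depends on $T$ alone (for instance $\max\{18,20T\}$), not on $c_{\mathrm{BDG}}$.

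\textbf{What each buys.} Your argument is shorter and cleaner for this lemma. The paper's energy method is reusable: the same weighted It\^o computation is what Step~4 of Lemma~\ref{lem:pert} runs when the driver depends on $(\delta X,\delta Y,\delta Z,\delta U)$. In that setting no explicit conditional-expectation formula for $\delta Y$ is available.

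\textbf{One small correction.} The cross term $2\,\E\inner{\int_0^T\delta Z_s\,\diff W_s}{\int_0^T\!\!\int_E\delta U_s(e)\,\Nt(\diff s,\diff e)}$ does not vanish because $W$ and $N$ are independent, since the integrands may depend on both. It vanishes because the covariation of a continuous and a purely discontinuous square-integrable martingale is zero, which is the reason you correctly give at the end.
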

\begin{proof}
\emph{Membership.} By \eqref{eq:dYlin} with $\int_t^T=\int_0^T-\int_0^t$,
\[
\sup_{t\le T}|\delta Y_t|\le|\delta\xi|+\int_0^T\!|\phi_s|\,\diff s
+2\sup_{t\le T}\Big|\int_0^t\delta Z_s\,\diff W_s\Big|
+2\sup_{t\le T}\Big|\int_0^t\!\!\int_E\delta U_s(e)\,\Nt(\diff s,\diff e)\Big|.
\]
Both stochastic integrals are true $L^2$-martingales by the It\^o and
compensated-Poisson isometries, as $\delta Z\in\HW$ and $\delta U\in\HN$.
Squaring, Cauchy--Schwarz on the drift term, and Doob's inequality then give
$\E\sup_{t\le T}|\delta Y_t|^2<\infty$, that is, $\delta Y\in\Ssp{\R^m}$.

The $\diff s$- and $\diff W$-integrals of $\delta Y$ are continuous, so
$\delta Y$ jumps only through its compensated Poisson integral: its jumps
$\Delta\,\delta Y_s:=\delta Y_s-\delta Y_{s-}$ satisfy $\Delta\,\delta Y_s=\delta U_s(e)$
at each atom $(s,e)$ of $N$; hence the jump variation
splits through $N=\Nt+\diff s\,\nu$ as
\begin{align*}
\sum_{t<s\le T}|\Delta\,\delta Y_s|^2
&=\int_t^T\!\!\int_E|\delta U_s(e)|^2\,N(\diff s,\diff e)\\
&=\int_t^T\!\!\int_E|\delta U_s(e)|^2\,\Nt(\diff s,\diff e)
+\int_t^T\!\Lnorm{\delta U_s}^2\,\diff s.
\end{align*}
It\^o's formula for
$e^{t}|\delta Y_t|^2$ then gives the pathwise identity
\begin{equation}\label{eq:itoY2}
\begin{aligned}
e^{t}|\delta Y_t|^2
&+\int_t^T\!e^{s}\big(|\delta Y_s|^2+|\delta Z_s|_F^2+\Lnorm{\delta U_s}^2\big)\diff s\\
&=e^{T}|\delta\xi|^2+2\int_t^T\!e^{s}\inner{\delta Y_s}{\phi_s}\diff s\\
&\quad-2\int_t^T\!e^{s}\inner{\delta Y_{s-}}{\delta Z_s\,\diff W_s}\\
&\quad-2\int_t^T\!\!\int_E e^{s}\inner{\delta Y_{s-}}{\delta U_s(e)}\,\Nt(\diff s,\diff e)\\
&\quad-\int_t^T\!\!\int_E e^{s}|\delta U_s(e)|^2\,\Nt(\diff s,\diff e).
\end{aligned}
\end{equation}

\emph{Zero-mean terms.} The stochastic-integral terms in \eqref{eq:itoY2} decompose into three
local martingales
\[
\begin{aligned}
M^W_t&:=\int_0^t e^{s}\inner{\delta Y_{s-}}{\delta Z_s\,\diff W_s},\\
M^1_t&:=\int_0^t\!\int_E e^{s}\inner{\delta Y_{s-}}{\delta U_s(e)}\,\Nt(\diff s,\diff e),\\
M^2_t&:=\int_0^t\!\int_E e^{s}|\delta U_s(e)|^2\,\Nt(\diff s,\diff e),
\end{aligned}
\]
each of which we now show satisfies $\E[M_T-M_t]=0$. The three do not share one argument:
$\delta U\in\HN$ gives the quadratic integrand of $M^2$ no square moments, so the $L^2$
theory behind the bounds below does not reach it --- nor, with $\nu(E)=\infty$ admitted,
does localisation along jump times, which then fail to exhaust $[0,T]$. $M^2$ is instead
handled through the compensation formula, which asks only for integrability.
For $M^W$, BDG and Cauchy--Schwarz give
\[
\E\sup_{t\le T}|M^W_t|
\le c_{\mathrm{BDG}}\,e^{T}\big(\E\sup_{t\le T}|\delta Y_t|^2\big)^{1/2}
\Big(\E\!\int_0^T\!|\delta Z_s|_F^2\,\diff s\Big)^{1/2}<\infty ,
\]
and for $M^1$ BDG, Cauchy--Schwarz and \eqref{eq:compid} give
\begin{align*}
\E\sup_{t\le T}|M^1_t|
&\le c_{\mathrm{BDG}}\,\E\Big[\Big(\int_0^T\!\!\int_E e^{2s}|\delta Y_{s-}|^2|\delta U_s(e)|^2
\,N(\diff s,\diff e)\Big)^{1/2}\Big]\\
&\le c_{\mathrm{BDG}}\,e^{T}\big(\E\sup_{t\le T}|\delta Y_t|^2\big)^{1/2}
\Big(\E\!\int_0^T\!\!\int_E|\delta U_s(e)|^2\,N(\diff s,\diff e)\Big)^{1/2}\\
&=c_{\mathrm{BDG}}\,e^{T}\big(\E\sup_{t\le T}|\delta Y_t|^2\big)^{1/2}
\Big(\E\!\int_0^T\!\!\int_E|\delta U_s(e)|^2\,\nu(\diff e)\,\diff s\Big)^{1/2}\\
&=c_{\mathrm{BDG}}\,e^{T}\big(\E\sup_{t\le T}|\delta Y_t|^2\big)^{1/2}
\Big(\E\!\int_0^T\!\Lnorm{\delta U_s}^2\,\diff s\Big)^{1/2}<\infty .
\end{align*}
These bounds put the running suprema of $M^W$ and $M^1$ in $L^1$, so the supremum
criterion gives $\E[M_T-M_t]=0$ for $M\in\{M^W,M^1\}$.
For $M^2$ the integrand $e^{s}|\delta U_s(e)|^2$ is predictable and nonnegative and,
because $\delta U\in\HN$, lies in $L^1(\diff\Prob\otimes\diff s\otimes\nu)$:
\[
\E\!\int_0^T\!\!\int_E e^{s}|\delta U_s(e)|^2\,\nu(\diff e)\,\diff s
\le e^{T}\,\E\!\int_0^T\!\Lnorm{\delta U_s}^2\,\diff s<\infty .
\]
Both the Poisson integral and its compensator are therefore integrable, so the increment
splits under the expectation:
\begin{align*}
\E[M^2_T-M^2_t]
&=\E\!\int_t^T\!\!\int_E e^{s}|\delta U_s(e)|^2\,\Nt(\diff s,\diff e)\\
&=\E\!\int_t^T\!\!\int_E e^{s}|\delta U_s(e)|^2\,N(\diff s,\diff e)
-\E\!\int_t^T\!\!\int_E e^{s}|\delta U_s(e)|^2\,\nu(\diff e)\,\diff s\\
&=0 .
\end{align*}
Taking
expectations in \eqref{eq:itoY2} therefore removes the three martingale terms, and
bounding the drift term by Young's inequality,
$2\inner{\delta Y_s}{\phi_s}\le|\delta Y_s|^2+|\phi_s|^2$, cancels the $|\delta Y_s|^2$
integrals on the two sides:
\[
\E e^{t}|\delta Y_t|^2
+\E\int_t^T e^{s}\big(|\delta Z_s|_F^2+\Lnorm{\delta U_s}^2\big)\diff s
\le
\E e^{T}|\delta\xi|^2
+\E\int_t^T e^{s}|\phi_s|^2\diff s .
\]
Bounding $1\le e^{s}\le e^{T}$ on both sides leaves, for every $t\le T$,
\[
\E|\delta Y_t|^2+\E\int_t^T\big(|\delta Z_s|_F^2+\Lnorm{\delta U_s}^2\big)\diff s\le R ,
\qquad
R:=e^{T}\Big(\E|\delta\xi|^2+\E\int_0^T|\phi_s|^2\diff s\Big).
\]
Both terms on the left are nonnegative and $R$ does not depend on $t$, so each is bounded
by $R$ on its own, the first uniformly in $t$ and the second at $t=0$; adding them,
\begin{equation}\label{eq:supE}
\begin{aligned}
&\sup_{t\le T}\E|\delta Y_t|^2
+\E\int_0^T\big(|\delta Z_s|_F^2+\Lnorm{\delta U_s}^2\big)\diff s\\
&\qquad\le 2R
=C_0\Big(\E|\delta\xi|^2+\E\int_0^T|\phi_s|^2\diff s\Big),
\qquad
C_0:=2e^{T} .
\end{aligned}
\end{equation}

It remains to upgrade $\sup_{t\le T}\E|\delta Y_t|^2$ to $\E\sup_{t\le T}|\delta Y_t|^2$. Taking the difference of
\eqref{eq:dYlin} at times $t$ and $0$ gives the forward form
\[
\delta Y_t=\delta Y_0-\int_0^t\phi_s\diff s+\int_0^t\delta Z_s\diff W_s+\int_0^t\!\int_E\delta U_s(e)\Nt(\diff s,\diff e) .
\]
Squaring, taking the supremum over $t$, Cauchy--Schwarz on the $\diff s$-integral, and BDG on
the two martingales give
\begin{align*}
\E\sup_{t\le T}|\delta Y_t|^2
&\le 4\,\E|\delta Y_0|^2+4\,\E\Big(\int_0^T\!|\phi_s|\diff s\Big)^{2}\\
&\qquad+4\,\E\sup_{t\le T}\Big|\int_0^t\!\delta Z_s\diff W_s\Big|^{2}
+4\,\E\sup_{t\le T}\Big|\int_0^t\!\!\int_E\delta U_s(e)\,\Nt(\diff s,\diff e)\Big|^{2}\\
&\le 4\,\E|\delta Y_0|^2+4T\,\E\!\int_0^T\!|\phi_s|^2\diff s
+4c_{\mathrm{BDG}}\,\E\!\int_0^T\!\big(|\delta Z_s|_F^2+\Lnorm{\delta U_s}^2\big)\diff s .
\end{align*}
Using $\E|\delta Y_0|^2\le\sup_{t\le T}\E|\delta Y_t|^2$ and \eqref{eq:supE},
\begin{align*}
\E\sup_{t\le T}|\delta Y_t|^2
&+\E\!\int_0^T\!\big(|\delta Z_s|_F^2+\Lnorm{\delta U_s}^2\big)\diff s\\
&\le 4\sup_{t\le T}\E|\delta Y_t|^2
+(4c_{\mathrm{BDG}}+1)\,\E\!\int_0^T\!\big(|\delta Z_s|_F^2+\Lnorm{\delta U_s}^2\big)\diff s\\
&\qquad+4T\,\E\!\int_0^T\!|\phi_s|^2\diff s\\
&\le\big(4C_0+(4c_{\mathrm{BDG}}+1)C_0+4T\big)
\Big(\E|\delta\xi|^2+\E\!\int_0^T\!|\phi_s|^2\diff s\Big)\\
&=C_{\mathrm{BP}}\Big(\E|\delta\xi|^2+\E\!\int_0^T\!|\phi_s|^2\diff s\Big),
\qquad C_{\mathrm{BP}}:=C_0(5+4c_{\mathrm{BDG}})+4T .
\end{align*}
This depends only on $T$ and $c_{\mathrm{BDG}}$.
\end{proof}

\subsection{Small-coupling solvability}

\begin{lemma}[Small-coupling base case]\label{lem:small}
Under Assumptions~\ref{ass:integ}--\ref{ass:lip}, there is $\lambda_*\in(0,1]$, depending only on $T$, $L$ and
$c_{\mathrm{BDG}}$, such that $\mathsf{WP}(\lambda)$ holds for
every $\lambda\in[0,\lambda_*]$.
\end{lemma}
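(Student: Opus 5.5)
We prove $\mathsf{WP}(\lambda)$ for small $\lambda$ by a Picard (Banach fixed-point) argument on $\HH$. The map freezes the coefficients along an input tuple and then solves a decoupled forward SDE and a linear BSDE.

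Fix $\lambda\in[0,1]$ and a forcing $q$. Given $\bar\Theta=(\bar X,\bar Y,\bar Z,\bar U)\in\HH$ with laws $\bar\mu_t$, $\bar\mu_t^-$, define $\Phi(\bar\Theta)=\Theta$ in two steps.
\begin{enumerate}[label=(\roman*)]
\item Set
\[
X_t=\xi_0+\lambda\int_0^t b(s,\bar\Theta_s,\bar\mu_s)\,\diff s+\cdots+\int_0^t\!\!\int_E q^h_s(e)\,\Nt(\diff s,\diff e).
\]
This is an explicit stochastic integral, since all integrands are frozen. By \eqref{eq:coeffint}, Assumption~\ref{ass:integ} and the measurability \eqref{eq:coefmeas}, the integrands are square integrable, so $X\in\Ssp{\R^n}$ by Doob/BDG.
\item Put $\xi:=\lambda g(X_T,\Law(X_T))+q^g_T$, which lies in $L^2$ by \eqref{eq:termg}. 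Apply the martingale representation to $\E[\xi+\int_0^T\phi_s\,\diff s\mid\F_t]$ with $\phi_s:=\lambda f(s,\bar\Theta_s,\bar\mu_s)+q^f_s$. This produces $(Y,Z,U)$ solving the linear BSDE with driver $\phi$, and Lemma~\ref{lem:bpbsde} gives $Y\in\Ssp{\R^m}$.
\end{enumerate}
Thus $\Phi:\HH\to\HH$. Its fixed points are exactly the solutions of \eqref{eq:forced}, with the $X$ and $Y$ components read as c\`adl\`ag processes.

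For contraction, take two inputs $\bar\Theta^1,\bar\Theta^2$ and write $\delta$ for differences. We control the coefficient differences using Assumption~\ref{ass:lip} at a.e.\ $t$, which applies because $\bar\Theta^k_t\in L^2(\F_t)$ for a.e.\ $t$. Combined with \eqref{eq:coupling}, this gives
\[
\E\int_0^T\big(|\delta b|^2+|\delta\sigma|_F^2+\Lnorm{\delta h}^2+|\delta f|^2\big)\diff s\le 8L^2\,\E\int_0^T|\delta\bar\Theta_s|_{\mathcal H}^2\,\diff s\le C(T,L)\|\delta\bar\Theta\|_{\HH}^2 .
\]
On the jump term we use that $\bar\Theta^-_s=\bar\Theta_s$ a.s.\ and $\bar\mu^-_s=\bar\mu_s$ for a.e.\ $s$ (Lemma~\ref{lem:lawflow}).
\begin{itemize}
\item \emph{Forward part.} The forcing cancels in the difference, so Doob/BDG gives $\E\sup|\delta X|^2\le\lambda^2 C_1(T,c_{\mathrm{BDG}})\cdot C(T,L)\|\delta\bar\Theta\|^2$.
\item \emph{Backward part.} Lemma~\ref{lem:bpbsde} with $\delta\xi=\lambda\,\delta g_T$ and $\delta\phi=\lambda\,\delta f$, together with \eqref{eq:lipT} and \eqref{eq:couplingT}, gives $\E|\delta g_T|^2\le 4L^2\E|\delta X_T|^2$. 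Hence
\[
\E\sup|\delta Y|^2+\E\int_0^T\big(|\delta Z|_F^2+\Lnorm{\delta U}^2\big)\le C_{\mathrm{BP}}\lambda^2\big(4L^2\E\sup|\delta X|^2+C\|\delta\bar\Theta\|^2\big).
\]
\end{itemize}
Since $\lambda\le1$, combining the two parts yields $\|\Phi(\bar\Theta^1)-\Phi(\bar\Theta^2)\|_{\HH}^2\le\lambda^2K\|\delta\bar\Theta\|_{\HH}^2$, where $K$ depends only on $T$, $L$ and $c_{\mathrm{BDG}}$. Choosing $\lambda_*:=\min\{1,(2K)^{-1/2}\}$ makes $\Phi$ a contraction for every $\lambda\le\lambda_*$ and every forcing $q$. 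Banach's theorem on the complete space $\HH$ then gives existence and uniqueness in $\HH$, which is $\mathsf{WP}(\lambda)$. Note that uniqueness here comes from the contraction itself and does not need monotonicity.

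The main obstacle is not the estimate but the well-definedness of $\Phi$. Concretely, we must check:
\begin{itemize}
\item the frozen coefficient processes are progressive, or predictable for $h$, and the law flows are Borel in $t$ (Lemma~\ref{lem:lawflow} and \eqref{eq:coefmeas});
\item the output is independent of the chosen representatives of $\bar Z$, $\bar U$ and of the equivalence class of $\bar\Theta$;
\item the terminal law $\Law(X_T)$ is the law of the genuine output at time $T$, not an a.e.-$t$ representative.
\end{itemize}
A further subtlety is that the Lipschitz bound holds only for a.e.\ $t$ and only along diagonal inputs. It must therefore be applied to the pair $(\bar\Theta^k_t,\mu^k_t)$, whose law flows coincide with the true time-$t$ laws for a.e.\ $t$, and only after integration in time. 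The constant also stays free of $\nu(E)$, because only $\Lnu$ norms enter through the compensated isometry and BDG.
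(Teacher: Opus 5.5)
Your proposal is correct and follows the paper's proof essentially step for step. It uses the same frozen-coefficient Picard map (an explicit forward stochastic integral, then the linear BSDE obtained by representing $\E[\Xi\mid\F_t]$, with the terminal value built from the \emph{output} $X_T$), the same $\lambda^2$ contraction bound via BDG, the coupling bound \eqref{eq:coupling}, $\E|\delta g_T|^2\le4L^2\E|\delta X_T|^2$ and Lemma~\ref{lem:bpbsde}, and Banach's theorem, with uniqueness coming from the contraction rather than from monotonicity.

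Two small points. Your constant $8L^2$ is not valid: the paper's Cauchy--Schwarz in $\R^5$ gives $10L^2$, and even a sharp split only reaches $9L^2$. This is immaterial, since the constant is absorbed into $C(T,L)$. Also, the claim that every solution in $\HH$ is a fixed point needs the short check the paper supplies: $Y_t+\int_0^t(\lambda f+q^f)\,\diff s$ is a true $L^2$-martingale equal to $\E[\Xi\mid\F_t]$, so uniqueness of the predictable representation identifies $(Z,U)$.
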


\begin{proof}
Fix $\lambda\in[0,1]$ and a forcing $q$. For a trial tuple $\Theta=(X,Y,Z,U)\in\HH$ write
\[
\mu^\Theta_t:=\Law(\Theta_t),\qquad\mu^{\Theta,-}_t:=\Law(\Theta_t^-),
\]
and define $\Phi_\lambda(\Theta):=\bar\Theta=(\bar X,\bar Y,\bar Z,\bar U)$ by the forward and
backward constructions below.

\emph{Forward.} Let $\bar X$ be given by the frozen forward dynamics
\begin{multline}\label{eq:frozenfwd}
\bar X_t=\xi_0+\lambda\!\int_0^t b(s,\Theta_s,\mu^\Theta_s)\,\diff s
+\lambda\!\int_0^t\sigma(s,\Theta_s,\mu^\Theta_s)\,\diff W_s\\
+\lambda\!\int_0^t\!\!\int_E h(s,\Theta_s^-,e,\mu^{\Theta,-}_s)\,\Nt(\diff s,\diff e)\\
+\int_0^t q^b_s\,\diff s+\int_0^t q^\sigma_s\,\diff W_s+\int_0^t\!\!\int_E q^h_s(e)\,\Nt(\diff s,\diff e).
\end{multline}
In \eqref{eq:frozenfwd} the coefficient $h$ is evaluated at the left limits
$(\Theta^-_s,\mu^{\Theta,-}_s)$; by Lemma~\ref{lem:lawflow} these agree with
$(\Theta_s,\mu^\Theta_s)$ outside a $\diff t\otimes\diff\Prob$-null set, so either form may be
used inside the $\diff t\otimes\diff\Prob$-integrals below.
Since $\Theta\in\HH$, \eqref{eq:coeffint} gives
\[
\E\!\int_0^T\!|\psi(s,\Theta_s,\mu^\Theta_s)|^2\,\diff s<\infty
\qquad\text{for }\psi\in\{b,\sigma,h\},
\]
and $q^b,q^\sigma,q^h$ are square-integrable by hypothesis. The coefficients are jointly Borel,
so these integrands are progressively measurable and the jump integrand is predictable.
Cauchy--Schwarz on the drift and Doob/BDG on the two martingale integrals then give
$\E\sup_{t\le T}|\bar X_t|^2<\infty$, so \eqref{eq:frozenfwd} defines a unique
$\bar X\in\Ssp{\R^n}$.

\emph{Backward.} The random variable
\[
\Xi:=\lambda\,g(\bar X_T,\Law(\bar X_T))+q^g_T+\int_0^T(\lambda f(s,\Theta_s,\mu^\Theta_s)+q^f_s)\,\diff s
\]
lies in $L^2(\Omega,\F_T;\R^m)$ by \eqref{eq:termg}, \eqref{eq:coeffint} applied to $f$, and the
square-integrability of $q^g_T$ and $q^f$. Set $M_t:=\E[\Xi\mid\F_t]$. By the Brownian--Poisson
predictable representation property there are unique
$\bar Z\in\HW$, $\bar U\in\HN$ with
\[
M_t=M_0+\int_0^t\bar Z_s\,\diff W_s+\int_0^t\!\int_E\bar U_s(e)\,\Nt(\diff s,\diff e).
\]
Put
\[
\bar Y_t:=M_t-\int_0^t(\lambda f(s,\Theta_s,\mu^\Theta_s)+q^f_s)\,\diff s .
\]
Doob's inequality and $\Xi\in L^2$ give $M\in\Ssp{\R^m}$. The drift is square-integrable by
\eqref{eq:coeffint} and $q^f\in L^2$, so Cauchy--Schwarz gives $\bar Y\in\Ssp{\R^m}$. By construction
$(\bar Y,\bar Z,\bar U)$ solves the frozen linear BSDE
\begin{multline}\label{eq:frozenbwd}
\bar Y_t=\lambda g(\bar X_T,\Law(\bar X_T))+q^g_T
+\int_t^T\!(\lambda f(s,\Theta_s,\mu^\Theta_s)+q^f_s)\,\diff s\\
-\int_t^T\bar Z_s\,\diff W_s-\int_t^T\!\!\int_E\bar U_s(e)\,\Nt(\diff s,\diff e),
\end{multline}
so $\bar\Theta\in\HH$ and $\Phi_\lambda:\HH\to\HH$ is well defined.

\emph{Contraction.} Take $\Theta^1,\Theta^2\in\HH$, write $\bar\Theta^k=\Phi_\lambda(\Theta^k)$,
and set
\[
\begin{gathered}
\delta\psi_s:=\psi(s,\Theta^1_s,\mu^{\Theta^1}_s)-\psi(s,\Theta^2_s,\mu^{\Theta^2}_s)
\quad(\psi\in\{b,\sigma,h,f\}),\\[0.4ex]
\delta X:=X^1-X^2,\qquad \delta\bar X:=\bar X^1-\bar X^2 ,
\end{gathered}
\]
and likewise for the remaining components of $\Theta^1-\Theta^2$ and of
$\bar\Theta^1-\bar\Theta^2$; the common initial datum $\xi_0$ and
the fixed forcing $q$ cancel in every difference.

\emph{Step 1: the coefficient differences.} Write
$S_s:=|\delta b_s|+|\delta\sigma_s|_F+\Lnorm{\delta h_s}+|\delta f_s|$. Assumption~\ref{ass:lip},
applied to the diagonal inputs $(\Theta^1_s,\mu^{\Theta^1}_s)$ and $(\Theta^2_s,\mu^{\Theta^2}_s)$,
gives
\[
S_s\le L\Big(|\delta X_s|+|\delta Y_s|+|\delta Z_s|_F+\Lnorm{\delta U_s}
+W_2\big(\mu^{\Theta^1}_s,\mu^{\Theta^2}_s\big)\Big) .
\]
Squaring, using Cauchy--Schwarz in $\R^5$, taking expectations, and bounding the deterministic
Wasserstein term by \eqref{eq:coupling} give
\[
\E\,S_s^2\le 10L^2\,\E\big[|\delta X_s|^2+|\delta Y_s|^2+|\delta Z_s|_F^2+\Lnorm{\delta U_s}^2\big].
\]
Integrating in $s$ and dominating the $X$- and $Y$-slots by their suprema,
\begin{equation}\label{eq:coeffdiff}
\begin{aligned}
\E\!\int_0^T\! S_s^2\,\diff s
&\le 10L^2\,\E\!\int_0^T\!\big(|\delta X_s|^2+|\delta Y_s|^2+|\delta Z_s|_F^2+\Lnorm{\delta U_s}^2\big)\diff s\\
&\le 10L^2\Big(T\,\E\sup_{t\le T}|\delta X_t|^2+T\,\E\sup_{t\le T}|\delta Y_t|^2\\
&\hspace{6.2em}+\E\!\int_0^T\!\big(|\delta Z_s|_F^2+\Lnorm{\delta U_s}^2\big)\diff s\Big)\\
&\le L_\star^2\,\|\Theta^1-\Theta^2\|_{\HH}^2 ,
\qquad L_\star^2:=10L^2\max\{T,1\} .
\end{aligned}
\end{equation}

\emph{Step 2: the forward difference.} Taking the difference of \eqref{eq:frozenfwd} at
$\Theta^1$ and $\Theta^2$,
\[
\delta\bar X_t=\lambda\!\int_0^t\!\delta b_s\,\diff s+\lambda\!\int_0^t\!\delta\sigma_s\,\diff W_s
+\lambda\!\int_0^t\!\!\int_E\delta h_s(e)\,\Nt(\diff s,\diff e) .
\]
Squaring, taking the supremum in $t$, and taking expectations,
\begin{equation}\label{eq:fwddiff}
\begin{aligned}
\E\sup_{t\le T}|\delta\bar X_t|^2
&\le 3\lambda^2\Big[\E\Big(\int_0^T\!|\delta b_s|\,\diff s\Big)^{2}
+\E\sup_{t\le T}\Big|\int_0^t\!\delta\sigma_s\,\diff W_s\Big|^{2}\\
&\hspace{13.4em}+\E\sup_{t\le T}\Big|\int_0^t\!\!\int_E\delta h_s(e)\,\Nt(\diff s,\diff e)\Big|^{2}\Big]\\
&\le 3\lambda^2\Big[T\,\E\!\int_0^T\!|\delta b_s|^2\,\diff s
+c_{\mathrm{BDG}}\,\E\!\int_0^T\!|\delta\sigma_s|_F^2\,\diff s
+c_{\mathrm{BDG}}\,\E\!\int_0^T\!\Lnorm{\delta h_s}^2\,\diff s\Big]\\
&\le 3\lambda^2\max\{T,c_{\mathrm{BDG}}\}\,
\E\!\int_0^T\!\big(|\delta b_s|^2+|\delta\sigma_s|_F^2+\Lnorm{\delta h_s}^2\big)\diff s\\
&\le 3\lambda^2\max\{T,c_{\mathrm{BDG}}\}\,\E\!\int_0^T\! S_s^2\,\diff s\\
&\le C_X\lambda^2\,\|\Theta^1-\Theta^2\|_{\HH}^2 ,
\qquad C_X:=3\max\{T,c_{\mathrm{BDG}}\}\,L_\star^2 ,
\end{aligned}
\end{equation}
the jump term by \eqref{eq:compid}, the next step since the summands of $S_s$ are
non-negative, and the last by \eqref{eq:coeffdiff}.

\emph{Step 3: the terminal difference.} Put
\[
\delta\bar\xi:=\lambda\big[g(\bar X^1_T,\Law(\bar X^1_T))-g(\bar X^2_T,\Law(\bar X^2_T))\big].
\]
Then
\begin{equation}\label{eq:terdiff}
\begin{aligned}
\E|\delta\bar\xi|^2
&\le\lambda^2L^2\,\E\big(|\delta\bar X_T|+W_2(\Law(\bar X^1_T),\Law(\bar X^2_T))\big)^2\\
&\le2\lambda^2L^2\big(\E|\delta\bar X_T|^2+W_2(\Law(\bar X^1_T),\Law(\bar X^2_T))^2\big)\\
&\le C_g\lambda^2\,\E|\delta\bar X_T|^2 ,\qquad C_g:=4L^2 ,
\end{aligned}
\end{equation}
the first step by \eqref{eq:lipT} and the last by \eqref{eq:couplingT}.

\emph{Step 4: the backward difference.} Taking the difference of \eqref{eq:frozenbwd} at
$\Theta^1$ and $\Theta^2$, the terms $q^g_T$ and $q^f$ cancelling,
\[
\delta\bar Y_t=\delta\bar\xi+\int_t^T\!\lambda\,\delta f_s\,\diff s
-\int_t^T\!\delta\bar Z_s\,\diff W_s-\int_t^T\!\!\int_E\delta\bar U_s(e)\,\Nt(\diff s,\diff e) ,
\]
which is \eqref{eq:dYlin} with $\delta\xi=\delta\bar\xi$ and $\phi_s=\lambda\,\delta f_s$.
$\delta\bar Y$ is c\`adl\`ag adapted, and the remaining hypotheses hold:
\[
\begin{alignedat}{2}
(\delta\bar Z,\delta\bar U)&\in\HW\times\HN
&\quad&\text{since }\bar\Theta^1,\bar\Theta^2\in\HH,\\
\delta\bar\xi&\in L^2(\F_T) &&\text{by }\eqref{eq:termg},\\
\lambda\,\delta f&\in L^2(\Omega\times[0,T]) &&\text{by }\eqref{eq:coeffdiff},
\text{ as }|\delta f|\le S.
\end{alignedat}
\]
By Lemma~\ref{lem:bpbsde},
\begin{equation}\label{eq:bwddiff}
\begin{aligned}
\E\sup_{t\le T}|\delta\bar Y_t|^2
&+\E\!\int_0^T\!\big(|\delta\bar Z_s|_F^2+\Lnorm{\delta\bar U_s}^2\big)\diff s\\
&\le C_{\mathrm{BP}}\Big(\E|\delta\bar\xi|^2+\lambda^2\,\E\!\int_0^T\!|\delta f_s|^2\,\diff s\Big)\\
&\le C_{\mathrm{BP}}\Big(C_g\lambda^2\,\E|\delta\bar X_T|^2
+L_\star^2\lambda^2\,\|\Theta^1-\Theta^2\|_{\HH}^2\Big)\\
&\le C_{\mathrm{BP}}\Big(C_g\lambda^2\,\E\sup_{t\le T}|\delta\bar X_t|^2
+L_\star^2\lambda^2\,\|\Theta^1-\Theta^2\|_{\HH}^2\Big)\\
&\le C_{\mathrm{BP}}\big(C_gC_X\lambda^4+L_\star^2\lambda^2\big)\|\Theta^1-\Theta^2\|_{\HH}^2\\
&\le C_Y\lambda^2\,\|\Theta^1-\Theta^2\|_{\HH}^2 ,
\qquad C_Y:=C_{\mathrm{BP}}\big(C_gC_X+L_\star^2\big) ,
\end{aligned}
\end{equation}
the second inequality by \eqref{eq:terdiff} and \eqref{eq:coeffdiff}, the fourth by
\eqref{eq:fwddiff}, and the last by $\lambda^4\le\lambda^2$ for $\lambda\in[0,1]$.

\emph{Step 5: the contraction.} Since
$\Phi_\lambda(\Theta^1)-\Phi_\lambda(\Theta^2)
=(\delta\bar X,\delta\bar Y,\delta\bar Z,\delta\bar U)$,
\begin{equation}\label{eq:contract}
\begin{aligned}
\|\Phi_\lambda(\Theta^1)-\Phi_\lambda(\Theta^2)\|_{\HH}^2
&=\E\sup_{t\le T}|\delta\bar X_t|^2\\
&\quad+\E\sup_{t\le T}|\delta\bar Y_t|^2
+\E\!\int_0^T\!\big(|\delta\bar Z_s|_F^2+\Lnorm{\delta\bar U_s}^2\big)\diff s\\
&\le(C_X+C_Y)\lambda^2\,\|\Theta^1-\Theta^2\|_{\HH}^2\\
&\le C_\Phi\lambda^2\,\|\Theta^1-\Theta^2\|_{\HH}^2 ,
\qquad C_\Phi:=\max\{1,C_X+C_Y\} ,
\end{aligned}
\end{equation}
the first inequality by \eqref{eq:fwddiff} and \eqref{eq:bwddiff}. Here
\[
L_\star=L_\star(T,L),\quad C_X=C_X(T,L,c_{\mathrm{BDG}}),\quad
C_g=C_g(L),\quad C_{\mathrm{BP}}=C_{\mathrm{BP}}(T,c_{\mathrm{BDG}}),
\]
so $C_\Phi=C_\Phi(T,L,c_{\mathrm{BDG}})$. Setting
\[
\lambda_*:=(2\sqrt{C_\Phi})^{-1}\in(0,\tfrac12]
\]
gives $C_\Phi\lambda^2\le\tfrac14$ for $\lambda\in[0,\lambda_*]$, so that \eqref{eq:contract}
reads
\[
\|\Phi_\lambda(\Theta^1)-\Phi_\lambda(\Theta^2)\|_{\HH}
\le\tfrac12\,\|\Theta^1-\Theta^2\|_{\HH},\qquad\lambda\in[0,\lambda_*] .
\]
By the Banach fixed-point theorem, $\Phi_\lambda$ has a unique fixed point $\Theta\in\HH$.
Evaluating $\Phi_\lambda$ at $\Theta$ itself,
\[
\mu^\Theta=\mu,\qquad\bar\Theta=\Theta ,
\]
so \eqref{eq:frozenfwd}--\eqref{eq:frozenbwd} become the forward and backward equations of
\eqref{eq:forced}, which is solved by $\Theta$.

\emph{Conversely.} Let $\Theta\in\HH$ solve \eqref{eq:forced} and evaluate $\Phi_\lambda$
at the trial $\Theta$. Again $\mu^\Theta=\mu$, so \eqref{eq:frozenfwd} and the forward
equation of \eqref{eq:forced} have the same right-hand side, which does not involve
$\bar X$; hence $\bar X=X$. For the backward component set
\[
M'_t:=Y_t+\int_0^t\!\big(\lambda f(s,\Theta_s,\mu_s)+q^f_s\big)\,\diff s ,
\]
so that
\[
M'_T=\Xi,\qquad
\diff M'_t=Z_t\,\diff W_t+\int_E U_t(e)\,\Nt(\diff t,\diff e) .
\]
Since $Z\in\HW$ and $U\in\HN$, $M'$ is a true $L^2$-martingale, so
\[
M'_t=\E[\Xi\mid\F_t]=M_t .
\]
Uniqueness of the predictable representation forces $(\bar Z,\bar U)=(Z,U)$, and
\eqref{eq:frozenbwd} then gives $\bar Y=Y$, so
\[
\bar\Theta=\Theta .
\]
Solutions are therefore exactly the fixed points, and the solution is unique; as $q$ was
arbitrary, $\mathsf{WP}(\lambda)$ holds on $[0,\lambda_*]$.
\end{proof}

\subsection{Continuous perturbation stability}

\begin{lemma}[Continuous perturbation stability along the homotopy]\label{lem:pert}
Let Assumptions~\ref{ass:lip}--\ref{ass:mono} hold, fix $\underline\lambda\in(0,1]$, and
fix $\lambda\in[\underline\lambda,1]$. Let
$\Theta^1,\Theta^2$ solve \eqref{eq:forced} at parameter $\lambda$ with $\F_0$-measurable
initial data $\xi_0^1,\xi_0^2$ and forcings
$q^k=(q^{b,k},q^{\sigma,k},q^{h,k},q^{f,k},q^{g,k}_T)$, $k=1,2$, where, for each $k$,
$X^k,Y^k$ are c\`adl\`ag adapted with
$\E\int_0^T(|X^k_t|^2+|Y^k_t|^2)\,\diff t<\infty$ and $X^k_T\in L^2(\Omega,\F_T;\R^n)$,
$Z^k\in\HW$, $U^k\in\HN$, and $\Theta^1-\Theta^2\in\HH$. Then, writing
$\delta\,{\cdot}={\cdot}^1-{\cdot}^2$,
\begin{multline}\label{eq:contpert}
\E\sup_{t\le T}|\delta X_t|^2+\E\sup_{t\le T}|\delta Y_t|^2
+\E\!\int_0^T\!\big(|\delta Z_t|_F^2+\Lnorm{\delta U_t}^2\big)\diff t\\
\le C_B\Big(\E|\delta\xi_0|^2+\E|\delta q^g_T|^2
+\E\!\int_0^T\!\big(|\delta q^b_t|^2+|\delta q^\sigma_t|_F^2
+\Lnorm{\delta q^h_t}^2+|\delta q^f_t|^2\big)\diff t\Big),
\end{multline}
where $C_B$ depends only on
$T,L,\|G\|,c_G,\alpha,\beta_1,\beta_2,\underline\lambda$ and $c_{\mathrm{BDG}}$.
\end{lemma}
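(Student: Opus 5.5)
The plan is the classical Peng--Wu/Hu--Peng energy method: combine the mixed product $\inner{G\,\delta X_t}{\delta Y_t}$ with the monotonicity of Assumption~\ref{ass:mono}, then close with standard forward/backward estimates. All differences are along the two solutions at the same $\lambda$. Write $\delta\psi_t$ for $\psi(t,\Theta^1_t,\mu^1_t)-\psi(t,\Theta^2_t,\mu^2_t)$, so that the forward difference has drift $\lambda\delta b+\delta q^b$ and diffusion $\lambda\delta\sigma+\delta q^\sigma$, with the analogous jump and driver terms. As in the proof of Theorem~\ref{thm:uniq}, $(\Theta^k_t,\mu^k_t)\in\mathcal D_t$ for a.e.\ $t$, so Assumptions~\ref{ass:lip} and~\ref{ass:mono} apply pointwise in $t$.

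\emph{Step 1: energy identity.} Apply It\^o's product formula to $\inner{G\delta X_t}{\delta Y_t}$ on $[0,T]$. The cross-variation consists of the Brownian term $\inner{G(\lambda\delta\sigma+\delta q^\sigma)}{\delta Z}_F$ and a jump term $\int_E\inner{G(\lambda\delta h+\delta q^h)(e)}{\delta U(e)}N(\diff s,\diff e)$. The jump term is split via $N=\Nt+\diff s\,\nu$, exactly as for $M^2$ in Lemma~\ref{lem:bpbsde}. The stochastic integrals have zero mean: the $W$ and $\Nt$ integrals against $\delta X_{s-},\delta Y_{s-}$ by the supremum criterion, since $\delta\Theta\in\HH$; the quadratic jump part by the compensation formula, since its integrand is in $L^1$ by Cauchy--Schwarz. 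Taking expectations and using $\delta X_0=\delta\xi_0$,
\[
\E\inner{G\delta X_T}{\delta g^\lambda_T}-\E\inner{G\delta\xi_0}{\delta Y_0}
=\lambda\,\E\!\int_0^T\!\mathcal M_s\,\diff s+\E\!\int_0^T\!\mathcal R_s\,\diff s,
\]
where $\mathcal M_s$ is exactly the bracket on the left of \eqref{eq:mono}, and $\mathcal R_s$ collects the forcing pairings $\inner{\delta q^b}{\Gstar\delta Y}+\inner{\delta q^\sigma}{\Gstar\delta Z}_F+\inner{\delta q^h}{\Gstar\delta U}_{\Lnu}-\inner{\delta q^f}{G\delta X}$. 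The terminal value is $\delta Y_T=\lambda\delta g_T+\delta q^g_T$.

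\emph{Step 2: monotonicity.} Applying Assumption~\ref{ass:mono} at the terminal time and, for a.e.\ $s$, in the interior, and using $\lambda\ge\underline\lambda$,
\[
\underline\lambda\Big(\alpha\E|G\delta X_T|^2+\beta_1\E\!\int_0^T\!\big(|\Gstar\delta Y|^2+|\Gstar\delta Z|_F^2+\Lnorm{\Gstar\delta U}^2\big)+\beta_2\E\!\int_0^T\!|G\delta X|^2\Big)
\le\E\inner{G\delta\xi_0}{\delta Y_0}-\E\inner{G\delta X_T}{\delta q^g_T}+\E\!\int_0^T\!\mathcal R_s\,\diff s.
\]
Every right-hand term is a product of a data difference with a solution difference. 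Young's inequality bounds each by $\varepsilon\|\delta\Theta\|_{\HH}^2+C_\varepsilon\,\mathrm{Data}$, where $\mathrm{Data}$ denotes the bracket on the right of \eqref{eq:contpert}.

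\emph{Step 3: closing via the rank cases.} Separately, I would derive the standard estimates. The forward one is obtained from BDG and Gronwall: $\E\sup|\delta X|^2\le C\big(\E|\delta\xi_0|^2+\E\int(|\delta Y|^2+|\delta Z|_F^2+\Lnorm{\delta U}^2)+\mathrm{Data}\big)$. The backward one comes from Lemma~\ref{lem:bpbsde} after an exponential-weight Gronwall absorbing the $L$-Lipschitz dependence of $f$ on $(\delta Y,\delta Z,\delta U)$ and on $W_2$, via \eqref{eq:coupling}: $\E\sup|\delta Y|^2+\E\int(|\delta Z|_F^2+\Lnorm{\delta U}^2)\le C\big(\E|\delta X_T|^2+\E\int|\delta X|^2+\mathrm{Data}\big)$.

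The cases are then as follows.
\begin{itemize}
\item If $m<n$, then $\beta_1>0$ and $c_G$ converts the $\Gstar$-quantities into full $\delta Y,\delta Z,\delta U$ norms; the forward estimate closes.
\item If $m>n$, then $\alpha,\beta_2>0$ control $\delta X_T$ and $\int|\delta X|^2$ through $c_G$; the backward estimate closes.
\item If $m=n$, $G$ is invertible and either branch works, according to whether $\alpha+\beta_2>0$ (using $\beta_1+\beta_2>0$ and $\alpha+\beta_1>0$).
\end{itemize}
The main obstacle is the intermediate mixed case, for example $\beta_1>0$ but $\alpha=0$, where the term $\E|\delta X_T|^2$ in the backward bound is controlled only by the forward bound. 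There I would chain the two estimates with $\varepsilon$ chosen small relative to $\underline\lambda\beta_1/c_G$, substituting the forward Gronwall bound for $\E|\delta X_T|^2$ and $\E\int|\delta X|^2$ before absorbing.

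Taking $\varepsilon$ small relative to $\underline\lambda\min$ (positive constants)$/(c_G C)$ absorbs $\|\delta\Theta\|_{\HH}^2$ into the left-hand side. This yields \eqref{eq:contpert} with $C_B$ depending only on the stated quantities; in particular no $\nu(E)$ enters, since only $\Lnu$ norms and the compensation formula are used.
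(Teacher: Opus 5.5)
Your proposal is correct and follows the paper's proof essentially step for step. The shared steps are the $G$-energy identity with the jump covariation split through $N=\Nt+\diff t\,\nu$, insertion of the expected monotonicity followed by Young's inequality, the forward Gr\"onwall and exponentially weighted backward estimates, and rank-case closure with absorption of the small multiple of $\|\delta\Theta\|_{\HH}^2$.

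One small correction concerns the case $m=n$. The branch should be chosen according to whether $\beta_1>0$ or $\beta_1=0$, not according to whether $\alpha+\beta_2>0$. If $\beta_1>0$, you are in the paper's Case~A, which is exactly your forward-then-backward chaining, so your ``mixed case'' is not a separate obstacle. If $\beta_1=0$, the constraints force $\alpha,\beta_2>0$, and you are in Case~B.
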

\begin{proof}
The difference equations below involve the forcings only through $\delta q$, which we
therefore write simply as $q$. We define
\[
P:=\E|\delta\xi_0|^2+\E|q^g_T|^2
+\E\!\int_0^T\!\big(|q^b_t|^2+|q^\sigma_t|_F^2+\Lnorm{q^h_t}^2+|q^f_t|^2\big)\diff t .
\]
Set
\[
\delta\psi_t:=\psi(t,\Theta^1_t,\mu^1_t)-\psi(t,\Theta^2_t,\mu^2_t),\qquad
\psi\in\{b,\sigma,h,f\},
\]
where $\mu^k_t:=\Law(\Theta^k_t)$; likewise $\delta g_T$, with $\delta h$ evaluated at the
predictable tuples $(\Theta^{k,-}_t,\mu^{k,-}_t)$. For a.e.\ $t$ each pair
$(\Theta^k_t,\mu^k_t)$ is a diagonal input in the sense of Definition~\ref{def:diag}:
$\Theta^k_t\in L^2(\Omega,\F_t;\mathcal H)$ for a.e.\ $t$ by Fubini, and $\mu^k_t$ is
the Borel law flow of Lemma~\ref{lem:lawflow}.
The difference equations are
\begin{align}
\diff\,\delta X_t&=(\lambda\delta b_t+q^b_t)\,\diff t+(\lambda\delta\sigma_t+q^\sigma_t)\,\diff W_t
+\int_E(\lambda\delta h_t(e)+q^h_t(e))\,\Nt(\diff t,\diff e),\label{eq:dX}\\
\diff\,\delta Y_t&=-(\lambda\delta f_t+q^f_t)\,\diff t+\delta Z_t\,\diff W_t
+\int_E\delta U_t(e)\,\Nt(\diff t,\diff e),\qquad \delta Y_T=\lambda\,\delta g_T+q^g_T.\label{eq:dY}
\end{align}

The five steps run as follows. Steps~1--2 pair $G\,\delta X$ against $\delta Y$ and
insert Assumption~\ref{ass:mono}: this controls the projections $G\delta X$ and
$\Gstar\delta Y,\Gstar\delta Z,\Gstar\delta U$, but only by the data $P$ together with a
free multiple of the full norm. Steps~3--4 record two a~priori estimates, the forward one
bounding $\delta X$ through $(\delta Y,\delta Z,\delta U)$ and the backward one bounding
$(\delta Y,\delta Z,\delta U)$ through $\delta X$; neither closes alone.
Step~5 breaks the circle: full rank makes $\Gstar$ injective
when $n\ge m$ and $G$ injective when $m\ge n$, so the coercive blocks present shed their
projections, and which blocks are present decides whether the forward or the backward
estimate enters first. The free multiple is then chosen small and absorbed.

\emph{Step 1 ($G$-energy identity).} Both $\delta X$ and $\delta Y$ jump only through
their compensated Poisson integrals, with $\Delta\,\delta X_t=(\lambda\delta h_t+q^h_t)(e)$
and $\Delta\,\delta Y_t=\delta U_t(e)$ at each atom $(t,e)$ of $N$; hence the jump
covariation splits through $N=\Nt+\diff t\,\nu$ as
\[
\begin{aligned}
\sum_{t\le T}\inner{G\,\Delta\delta X_t}{\Delta\delta Y_t}
&=\int_0^T\!\!\int_E\inner{G(\lambda\delta h_t+q^h_t)(e)}{\delta U_t(e)}\,N(\diff t,\diff e)\\
&=\int_0^T\!\!\int_E\inner{G(\lambda\delta h_t+q^h_t)(e)}{\delta U_t(e)}\,\Nt(\diff t,\diff e)\\
&+\int_0^T\!\!\int_E\inner{\lambda\delta h_t(e)+q^h_t(e)}{\Gstar\delta U_t(e)}\,\nu(\diff e)\diff t .
\end{aligned}
\]
The continuous covariation of $\delta X$ and $\delta Y$, whose diffusions are
$\lambda\delta\sigma_t+q^\sigma_t$ and $\delta Z_t$, contributes
\[
\int_0^T\inner{\lambda\delta\sigma_t+q^\sigma_t}{\Gstar\delta Z_t}_F\,\diff t .
\]
It\^o's product formula for $\inner{G\,\delta X_t}{\delta Y_t}$, integrated over $[0,T]$ and using
$\delta X_0=\delta\xi_0$, then gives the pathwise identity
\begin{equation}\label{eq:Gid-path}
\begin{aligned}
\inner{G\delta X_T}{\delta Y_T}-\inner{G\,\delta\xi_0}{\delta Y_0}
&=\int_0^T\inner{\lambda\delta b_t+q^b_t}{\Gstar\delta Y_{t-}}\,\diff t\\
&+\int_0^T\inner{\lambda\delta\sigma_t+q^\sigma_t}{\Gstar\delta Z_t}_F\,\diff t\\
&+\int_0^T\!\!\int_E\inner{\lambda\delta h_t(e)+q^h_t(e)}{\Gstar\delta U_t(e)}\,\nu(\diff e)\diff t\\
&+\int_0^T\inner{-\lambda\delta f_t-q^f_t}{G\delta X_{t-}}\,\diff t\\
&+\int_0^T\inner{G\delta X_{t-}}{\delta Z_t\,\diff W_t}\\
&+\int_0^T\inner{G(\lambda\delta\sigma_t+q^\sigma_t)\,\diff W_t}{\delta Y_{t-}}\\
&+\int_0^T\!\!\int_E\inner{G\delta X_{t-}}{\delta U_t(e)}\,\Nt(\diff t,\diff e)\\
&+\int_0^T\!\!\int_E\inner{G(\lambda\delta h_t+q^h_t)(e)}{\delta Y_{t-}}\,\Nt(\diff t,\diff e)\\
&+\int_0^T\!\!\int_E\inner{G(\lambda\delta h_t+q^h_t)(e)}{\delta U_t(e)}\,\Nt(\diff t,\diff e) .
\end{aligned}
\end{equation}
Of the last five terms of \eqref{eq:Gid-path}, the first four are bounded by BDG and
Cauchy--Schwarz, with \eqref{eq:compid} in the jump terms:
\begin{equation}\label{eq:bdg4}
\left\{\;
\begin{aligned}
&\E\sup_{t\le T}\Big|\int_0^t\inner{G\delta X_{s-}}{\delta Z_s\,\diff W_s}\Big|\\
&\hspace{0.14\linewidth}\le c_{\mathrm{BDG}}\|G\|\big(\E\sup_{t\le T}|\delta X_t|^2\big)^{1/2}
\Big(\E\!\int_0^T\!|\delta Z_t|_F^2\,\diff t\Big)^{1/2},\\
&\E\sup_{t\le T}\Big|\int_0^t\inner{G(\lambda\delta\sigma_s+q^\sigma_s)\,\diff W_s}{\delta Y_{s-}}\Big|\\
&\hspace{0.14\linewidth}\le c_{\mathrm{BDG}}\|G\|\big(\E\sup_{t\le T}|\delta Y_t|^2\big)^{1/2}
\Big(\E\!\int_0^T\!|\lambda\delta\sigma_t+q^\sigma_t|_F^2\,\diff t\Big)^{1/2},\\
&\E\sup_{t\le T}\Big|\int_0^t\!\!\int_E\inner{G\delta X_{s-}}{\delta U_s(e)}\,\Nt(\diff s,\diff e)\Big|\\
&\hspace{0.14\linewidth}\le c_{\mathrm{BDG}}\|G\|\big(\E\sup_{t\le T}|\delta X_t|^2\big)^{1/2}
\Big(\E\!\int_0^T\!\Lnorm{\delta U_t}^2\,\diff t\Big)^{1/2},\\
&\E\sup_{t\le T}\Big|\int_0^t\!\!\int_E\inner{G(\lambda\delta h_s+q^h_s)(e)}{\delta Y_{s-}}\,\Nt(\diff s,\diff e)\Big|\\
&\hspace{0.14\linewidth}\le c_{\mathrm{BDG}}\|G\|\big(\E\sup_{t\le T}|\delta Y_t|^2\big)^{1/2}
\Big(\E\!\int_0^T\!\Lnorm{\lambda\delta h_t+q^h_t}^2\,\diff t\Big)^{1/2} .
\end{aligned}
\right.
\end{equation}

Since $\Theta^1-\Theta^2\in\HH$,
\[
\E\sup_{t\le T}|\delta X_t|^2+\E\sup_{t\le T}|\delta Y_t|^2
+\E\!\int_0^T\!\big(|\delta Z_t|_F^2+\Lnorm{\delta U_t}^2\big)\diff t
=\|\Theta^1-\Theta^2\|_{\HH}^2<\infty ,
\]
and by the computation of \eqref{eq:coeffdiff}, with the forcings square-integrable,
\[
\E\!\int_0^T\!\big(|\lambda\delta\sigma_t+q^\sigma_t|_F^2
+\Lnorm{\lambda\delta h_t+q^h_t}^2\big)\diff t<\infty .
\]
The four suprema in \eqref{eq:bdg4} are therefore integrable, so the supremum criterion
applies to those four local martingales. The fifth has predictable
integrand, and by Cauchy--Schwarz
\[
\begin{aligned}
&\E\!\int_0^T\!\!\int_E\big|\inner{\lambda\delta h_t(e)+q^h_t(e)}{\Gstar\delta U_t(e)}\big|\,\nu(\diff e)\diff t\\
&\hspace{0.26\linewidth}\le\|G\|\Big(\E\!\int_0^T\!\Lnorm{\lambda\delta h_t+q^h_t}^2\diff t\Big)^{1/2}
\Big(\E\!\int_0^T\!\Lnorm{\delta U_t}^2\diff t\Big)^{1/2}\\
&\hspace{0.26\linewidth}<\infty .
\end{aligned}
\]
Hence \eqref{eq:zeromean} applies, and all five terms have zero expectation.

Taking expectations and dropping the left limits in the drift multipliers (the
$\diff t$-integral convention above) reduces \eqref{eq:Gid-path} to
\begin{multline}\label{eq:Gid}
\E\inner{G\delta X_T}{\delta Y_T}-\E\inner{G\,\delta\xi_0}{\delta Y_0}
=\E\!\int_0^T\!\Big[\inner{\lambda\delta b_t+q^b_t}{\Gstar\delta Y_t}
+\inner{\lambda\delta\sigma_t+q^\sigma_t}{\Gstar\delta Z_t}_F\\
+\!\int_E\!\inner{\lambda\delta h_t(e)+q^h_t(e)}{\Gstar\delta U_t(e)}\nu(\diff e)
+\inner{-\lambda\delta f_t-q^f_t}{G\delta X_t}\Big]\diff t .
\end{multline}

\emph{Step 2 (insertion of monotonicity).} In the $h$-channel $\delta h_t$ is evaluated at the
predictable tuples $(\Theta^{k,-}_t,\mu^{k,-}_t)$, which agree with $(\Theta^k_t,\mu^k_t)$
for $\diff t$-a.e.\ $t$ by Lemma~\ref{lem:lawflow}. Assumption~\ref{ass:mono}, applied at
the common input pair $(\Theta^1_t,\mu^1_t),(\Theta^2_t,\mu^2_t)$, therefore bounds the
unperturbed part of the integrand of \eqref{eq:Gid}:
\begin{equation}\label{eq:intmono}
\begin{aligned}
&\lambda\E\Big[\inner{\delta b_t}{\Gstar\delta Y_t}+\inner{\delta\sigma_t}{\Gstar\delta Z_t}_F
+\!\int_E\!\inner{\delta h_t(e)}{\Gstar\delta U_t(e)}\nu(\diff e)+\inner{-\delta f_t}{G\delta X_t}\Big]\\
&\qquad\le-\lambda\beta_1\E\big[|\Gstar\delta Y_t|^2+|\Gstar\delta Z_t|_F^2+\Lnorm{\Gstar\delta U_t}^2\big]
-\lambda\beta_2\E|G\delta X_t|^2 .
\end{aligned}
\end{equation}

The terminal side of \eqref{eq:Gid} is bounded by the terminal monotonicity of
Assumption~\ref{ass:mono}:
\begin{equation}\label{eq:termmono}
\begin{aligned}
\E\inner{G\delta X_T}{\delta Y_T}
&=\lambda\E\inner{G\delta X_T}{\delta g_T}+\E\inner{G\delta X_T}{q^g_T}\\
&\ge\lambda\alpha\E|G\delta X_T|^2-|\E\inner{G\delta X_T}{q^g_T}| .
\end{aligned}
\end{equation}
Recall
\[
P=\E|\delta\xi_0|^2+\E|q^g_T|^2
+\E\!\int_0^T\!\big(|q^b_t|^2+|q^\sigma_t|_F^2+\Lnorm{q^h_t}^2+|q^f_t|^2\big)\diff t ,
\]
and set
\[
\begin{aligned}
\mathcal N'&:=\E|\delta X_T|^2+\E|\delta Y_0|^2
+\E\!\int_0^T\!\big(|\delta X_t|^2+|\delta Y_t|^2+|\delta Z_t|_F^2+\Lnorm{\delta U_t}^2\big)\diff t ,\\
\mathcal K&:=\eta\,\mathcal N'+\tfrac{\|G\|^2}{4\eta}\,P ,\qquad \eta>0 .
\end{aligned}
\]
Inserting \eqref{eq:intmono} and \eqref{eq:termmono} into \eqref{eq:Gid} gives
\[
\begin{aligned}
&\lambda\alpha\E|G\delta X_T|^2-|\E\inner{G\delta X_T}{q^g_T}|-\E\inner{G\,\delta\xi_0}{\delta Y_0}\\
&\qquad\le-\lambda\beta_1\E\!\int_0^T\!\big[|\Gstar\delta Y_t|^2+|\Gstar\delta Z_t|_F^2+\Lnorm{\Gstar\delta U_t}^2\big]\diff t
-\lambda\beta_2\E\!\int_0^T\!|G\delta X_t|^2\diff t\\
&\qquad+\E\!\int_0^T\!\Big(\inner{q^b_t}{\Gstar\delta Y_t}+\inner{q^\sigma_t}{\Gstar\delta Z_t}_F
+\inner{-q^f_t}{G\delta X_t}\\
&\qquad\qquad\quad+\int_E\inner{q^h_t(e)}{\Gstar\delta U_t(e)}\,\nu(\diff e)\Big)\diff t .
\end{aligned}
\]
Rearranging, and applying Young's inequality with $|\Gstar v|\le\|G\|\,|v|$ and
$|Gv|\le\|G\|\,|v|$, gives
\begin{equation}\label{eq:coerc}
\begin{aligned}
&\lambda\alpha\E|G\delta X_T|^2+\lambda\beta_2\E\!\int_0^T\!|G\delta X_t|^2\diff t\\
&\quad+\lambda\beta_1\E\!\int_0^T\!\big[|\Gstar\delta Y_t|^2+|\Gstar\delta Z_t|_F^2+\Lnorm{\Gstar\delta U_t}^2\big]\diff t\\
&\qquad\le|\E\inner{G\,\delta\xi_0}{\delta Y_0}|+|\E\inner{G\delta X_T}{q^g_T}|
+\E\!\int_0^T\!\big(|\inner{q^b}{\Gstar\delta Y}|+|\inner{q^\sigma}{\Gstar\delta Z}_F|\big)\diff t\\
&\qquad+\E\!\int_0^T\!\Bigl(\Bigl|\int_E\inner{q^h(e)}{\Gstar\delta U(e)}\,\nu(\diff e)\Bigr|
+|\inner{q^f}{G\delta X}|\Bigr)\diff t\\
&\qquad\le\mathcal K .
\end{aligned}
\end{equation}
Each coercive block present on the left of \eqref{eq:coerc} is therefore at most
$\mathcal K$.

\emph{Step 3 (forward estimate).} Fix $t\le T$ and integrate \eqref{eq:dX} over $[0,s]$,
$s\le t$, with $\delta X_0=\delta\xi_0$. Squaring, taking the supremum over $s$ and
expectations, Cauchy--Schwarz on the drift, BDG on the two martingales and
Assumption~\ref{ass:lip} give
\[
\begin{aligned}
\E\sup_{s\le t}|\delta X_s|^2
&\le C\Big(\E|\delta\xi_0|^2+\E\!\int_0^t\!\big(|\delta b_s|^2+|\delta\sigma_s|_F^2+\Lnorm{\delta h_s}^2\big)\diff s+P\Big)\\
&\le C\Big(\E|\delta\xi_0|^2+\E\!\int_0^t\!\big(|\delta X_s|^2+|\delta Y_s|^2+|\delta Z_s|_F^2+\Lnorm{\delta U_s}^2\big)\diff s+P\Big)\\
&\le C\Big(\int_0^t\!\E\sup_{r\le s}|\delta X_r|^2\,\diff s
+\E\!\int_0^T\!\big(|\delta Y_s|^2+|\delta Z_s|_F^2+\Lnorm{\delta U_s}^2\big)\diff s+P\Big) .
\end{aligned}
\]
The map $t\mapsto\E\sup_{s\le t}|\delta X_s|^2$ is nondecreasing and finite, since
$\Theta^1-\Theta^2\in\HH$, so Gr\"onwall gives
\begin{equation}\label{eq:fwd}
\E\sup_{t\le T}|\delta X_t|^2\le C\Big(\E\!\int_0^T\!\big(|\delta Y_t|^2+|\delta Z_t|_F^2+\Lnorm{\delta U_t}^2\big)\diff t+P\Big).
\end{equation}

\emph{Step 4 (backward estimate).} It\^o's formula applied to $e^{\kappa t}|\delta Y_t|^2$
gives, for $\kappa>0$,
\begin{equation}\label{eq:bwdito}
\begin{aligned}
&\E\,e^{\kappa t}|\delta Y_t|^2
+\E\!\int_t^T\!e^{\kappa s}\big(\kappa|\delta Y_s|^2+|\delta Z_s|_F^2+\Lnorm{\delta U_s}^2\big)\diff s\\
&\qquad\le\E\,e^{\kappa T}|\lambda\delta g_T+q^g_T|^2
+2\,\E\!\int_t^T\!e^{\kappa s}\inner{\delta Y_s}{\lambda\delta f_s+q^f_s}\diff s .
\end{aligned}
\end{equation}
By Assumption~\ref{ass:lip}, the coupling bound \eqref{eq:coupling}, and Young's
inequality, for every $\varepsilon>0$,
\begin{equation}\label{eq:bwdyoung}
2\,\E\inner{\delta Y_s}{\lambda\delta f_s+q^f_s}
\le \frac C\varepsilon\,\E|\delta Y_s|^2+\varepsilon\,\E\big(|\delta Z_s|_F^2+\Lnorm{\delta U_s}^2\big)
+\frac C\varepsilon\,\E\big(|\delta X_s|^2+|q^f_s|^2\big),
\end{equation}
with $C$ independent of $\varepsilon$.

Insert \eqref{eq:bwdyoung} in \eqref{eq:bwdito}:
\[
\begin{aligned}
&\E\,e^{\kappa t}|\delta Y_t|^2
+\E\!\int_t^T\!e^{\kappa s}\Big[\big(\kappa-\tfrac{C}{\varepsilon}\big)|\delta Y_s|^2
+(1-\varepsilon)\big(|\delta Z_s|_F^2+\Lnorm{\delta U_s}^2\big)\Big]\diff s\\
&\qquad\le\E\,e^{\kappa T}|\lambda\delta g_T+q^g_T|^2
+\tfrac{C}{\varepsilon}\,\E\!\int_t^T\!e^{\kappa s}\big(|\delta X_s|^2+|q^f_s|^2\big)\diff s .
\end{aligned}
\]
Take $\varepsilon<1$ and $\kappa>C/\varepsilon$. The $\delta Y$-integral on the left is then
nonnegative and may be dropped, and the weights satisfy $1\le e^{\kappa s}\le e^{\kappa T}$
on $[0,T]$, so
\[
\E|\delta Y_t|^2+\E\!\int_t^T\!\big(|\delta Z_s|_F^2+\Lnorm{\delta U_s}^2\big)\diff s
\le C\Big(\E|\lambda\delta g_T+q^g_T|^2
+\E\!\int_t^T\!\big(|\delta X_s|^2+|q^f_s|^2\big)\diff s\Big).
\]
With $\E|\delta g_T|^2\le C\,\E|\delta X_T|^2$, the forcings in $P$, and
$\int_t^T\le\int_0^T$ on the right, the bound no longer depends on $t$. Taking the
supremum over $t$ in the first term on the left and $t=0$ in the second, there remains
\begin{equation}\label{eq:wgt}
\sup_{t\le T}\E|\delta Y_t|^2
+\E\!\int_0^T\!\big(|\delta Z_t|_F^2+\Lnorm{\delta U_t}^2\big)\diff t
\le C\Big(\E|\delta X_T|^2+\E\!\int_0^T\!|\delta X_t|^2\diff t+P\Big).
\end{equation}
Writing \eqref{eq:dY} forward from $\delta Y_0$, BDG, Cauchy--Schwarz,
Assumption~\ref{ass:lip} with the coupling bound \eqref{eq:coupling}, and \eqref{eq:wgt} give
\[
\begin{aligned}
\E\sup_{t\le T}|\delta Y_t|^2
&\le C\Big(\E|\delta Y_0|^2+\E\!\int_0^T\!|\lambda\,\delta f_s+q^f_s|^2\diff s
+\E\!\int_0^T\!\big(|\delta Z_s|_F^2+\Lnorm{\delta U_s}^2\big)\diff s\Big)\\
&\le C\Big(\E|\delta Y_0|^2
+\E\!\int_0^T\!\big(|\delta X_s|^2+|\delta Y_s|^2+|\delta Z_s|_F^2+\Lnorm{\delta U_s}^2\big)\diff s+P\Big)\\
&\le C\Big(\E|\delta X_T|^2+\E\!\int_0^T\!|\delta X_t|^2\diff t+P\Big) .
\end{aligned}
\]
The first term of \eqref{eq:wgt} may therefore be replaced by
$\E\sup_{t\le T}|\delta Y_t|^2$:
\begin{equation}\label{eq:bwd}
\E\sup_{t\le T}|\delta Y_t|^2+\E\!\int_0^T\!\big(|\delta Z_t|_F^2+\Lnorm{\delta U_t}^2\big)\diff t
\le C\Big(\E|\delta X_T|^2+\E\!\int_0^T\!|\delta X_t|^2\diff t+P\Big).
\end{equation}

\emph{Step 5 (rank-case closure, then absorb).} Write
\[
\mathcal N:=\E\sup_{t\le T}|\delta X_t|^2+\E\sup_{t\le T}|\delta Y_t|^2+\E\!\int_0^T\!\big(|\delta Z_t|_F^2+\Lnorm{\delta U_t}^2\big)\diff t
\]
so that \eqref{eq:contpert} is the claim $\mathcal N\le C_B P$. Then
$\mathcal N'\le(1+2T)\mathcal N$, and by
\eqref{eq:coerc} each coercive block on its left is at most $\mathcal K$.

\emph{Case A ($n\ge m$; active when $m<n$, where $\beta_1>0$).} As $\Gstar$ is injective,
Assumption~\ref{ass:G} gives
\[
|\delta Y|^2\le c_G|\Gstar\delta Y|^2,\qquad
|\delta Z|_F^2\le c_G|\Gstar\delta Z|_F^2,\qquad
\Lnorm{\delta U}^2\le c_G\Lnorm{\Gstar\delta U}^2 .
\]
With $\beta_1>0$ and $\lambda\ge\underline\lambda$, the $\beta_1$-block of \eqref{eq:coerc} yields
\[
\E\!\int_0^T\!\big(|\delta Y_t|^2+|\delta Z_t|_F^2+\Lnorm{\delta U_t}^2\big)\diff t\le\bigl(c_G/(\underline\lambda\beta_1)\bigr)\mathcal K.
\]
Inserting this in \eqref{eq:fwd}, and the result in \eqref{eq:bwd},
\[
\begin{aligned}
\E\sup_{t\le T}|\delta X_t|^2&\le C(\mathcal K+P),\\
\E\sup_{t\le T}|\delta Y_t|^2
+\E\!\int_0^T\!\big(|\delta Z_t|_F^2+\Lnorm{\delta U_t}^2\big)\diff t&\le C(\mathcal K+P),
\end{aligned}
\]
so $\mathcal N\le C(\mathcal K+P)$.

\emph{Case B ($m\ge n$; active when $m>n$, where $\alpha,\beta_2>0$).} As $G$ is injective,
Assumption~\ref{ass:G} gives
\[
|\delta X|^2\le c_G|G\delta X|^2 .
\]
With $\alpha,\beta_2>0$ and $\lambda\ge\underline\lambda$, the $\alpha,\beta_2$-blocks of
\eqref{eq:coerc} yield
\[
\E|\delta X_T|^2+\E\!\int_0^T\!|\delta X_t|^2\diff t\le\bigl(2c_G/(\underline\lambda\min\{\alpha,\beta_2\})\bigr)\mathcal K.
\]
Inserting this in \eqref{eq:bwd}, and the result in \eqref{eq:fwd},
\[
\begin{aligned}
\E\sup_{t\le T}|\delta Y_t|^2
+\E\!\int_0^T\!\big(|\delta Z_t|_F^2+\Lnorm{\delta U_t}^2\big)\diff t&\le C(\mathcal K+P),\\
\E\sup_{t\le T}|\delta X_t|^2&\le C(\mathcal K+P),
\end{aligned}
\]
so again $\mathcal N\le C(\mathcal K+P)$.

When $m=n$, both $G,\Gstar$ are injective, so Case~A is available whenever $\beta_1>0$ and
Case~B whenever $\alpha,\beta_2>0$; Assumption~\ref{ass:mono} provides at least one of the
two, and when both are available we use whichever gives the smaller bound. In every
case
\[
\mathcal N\le C\Big(\eta\mathcal N'+\tfrac{\|G\|^2}{4\eta} P+P\Big)\le C\eta(1+2T)\mathcal N+C\,P;
\]
choosing $\eta$
small enough that $C\eta(1+2T)\le\tfrac12$ absorbs the $\mathcal N$-term (finite,
since $\Theta^1-\Theta^2\in\HH$) and gives
$\mathcal N\le C\,P$, which is \eqref{eq:contpert} with $C_B:=C$.
\end{proof}

\subsection{Stability}\label{sub:stabuniq}

\begin{proof}[Proof of Theorem~\ref{thm:stab}]
Take the system-$1$ coefficients $(b^1,\sigma^1,h^1,f^1,g^1)$ as the baseline and read
$\Theta^2$ as a forced solution of that system: $\Theta^2$ solves \eqref{eq:forced} at
$\lambda=1$ with
\[
q^{\psi,2}:=-(\delta\psi),\qquad \psi\in\{b,\sigma,h,f\},\qquad
q^{g,2}_T:=-\widehat{\delta g}_T ,
\]
while $\Theta^1$ solves it with zero forcing.

These form a square-integrable forcing for \eqref{eq:forced}: as differences of the
coefficient processes \eqref{eq:coefmeas}, the $q^{\psi,2}$ inherit their measurability,
and they are square-integrable by \eqref{eq:coeffint} applied to each system with its own
constants; $q^{g,2}_T\in L^2(\Omega,\F_T;\R^m)$ by \eqref{eq:lipT} with $X^2_T\in L^2$.

Since $q^1=0$, the difference $\delta q=q^1-q^2$ has $\delta q^\psi=(\delta\psi)$ and
$\delta q^g_T=\widehat{\delta g}_T$, so Lemma~\ref{lem:pert} at parameter $\lambda=1$ and
lower bound $\underline\lambda=1$, applied to
system~$1$, gives
\begin{multline*}
\E\sup_{t\le T}|\delta X_t|^2+\E\sup_{t\le T}|\delta Y_t|^2
+\E\!\int_0^T\!\big(|\delta Z_t|_F^2+\Lnorm{\delta U_t}^2\big)\diff t\\
\le C_B\Big(\E|\delta\xi_0|^2+\E\big|\widehat{\delta g}_T\big|^2\\
+\E\!\int_0^T\!\big(|(\delta b)(t)|^2+|(\delta\sigma)(t)|_F^2
+\Lnorm{(\delta h)(t)}^2+|(\delta f)(t)|^2\big)\diff t\Big),
\end{multline*}
which is \eqref{eq:stab} with $C_{\mathrm{stab}}:=C_B$.
\end{proof}

\subsection{The continuation step}

Taking identical sources and the common initial datum in Lemma~\ref{lem:pert} (at
lower bound $\underline\lambda=\lambda_*$) yields
uniqueness at each $\lambda\in[\lambda_*,1]$; Lemma~\ref{lem:small} gives it on
$[0,\lambda_*]$. It remains to prove existence up to $\lambda=1$.

\begin{lemma}[Continuation]\label{lem:cont}
Let Assumptions~\ref{ass:integ}--\ref{ass:mono} hold, and let $\lambda_*$ be the
small-coupling threshold of Lemma~\ref{lem:small}. There is $\zeta_0>0$, depending only on
$T$, $L$, $\|G\|$, $c_G$, $\alpha$, $\beta_1$, $\beta_2$, $\lambda_*$ and $c_{\mathrm{BDG}}$, such that: if
$\lambda_0\in[\lambda_*,1]$ and $\mathsf{WP}(\lambda_0)$ holds, then
$\mathsf{WP}(\lambda_0+\zeta)$ holds for every $\zeta\in[0,\zeta_0]$ with $\lambda_0+\zeta\le1$.
\end{lemma}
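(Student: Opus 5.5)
The plan is the standard Peng--Wu continuation. Fix $\lambda_0\in[\lambda_*,1]$ with $\mathsf{WP}(\lambda_0)$, a target $\lambda=\lambda_0+\zeta\le1$, and a forcing $q$. For a trial tuple $\Theta\in\HH$ with $\mu^\Theta_t=\Law(\Theta_t)$, I define $\Gamma(\Theta)$ as the unique solution in $\HH$, given by $\mathsf{WP}(\lambda_0)$, of \eqref{eq:forced} at parameter $\lambda_0$ with the augmented forcing
\[
\tilde q^\psi:=q^\psi+\zeta\,\psi(\cdot,\Theta,\mu^\Theta),\quad\psi\in\{b,\sigma,h,f\},\qquad
\tilde q^g_T:=q^g_T+\zeta\,g(X_T,\Law(X_T)),
\]
with $h$ evaluated at $(\Theta^-,\mu^{\Theta,-})$. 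By \eqref{eq:coeffint}, \eqref{eq:termg} and Assumption~\ref{ass:integ}, this is an admissible square-integrable forcing with the right measurability, so $\Gamma:\HH\to\HH$ is well defined. A fixed point of $\Gamma$ is a solution of \eqref{eq:forced} at $\lambda_0+\zeta$ with forcing $q$, and conversely. Uniqueness at $\lambda_0+\zeta$ follows from Lemma~\ref{lem:pert} with $\underline\lambda=\lambda_*$.

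For the contraction, take $\Theta^1,\Theta^2\in\HH$ and let $\bar\Theta^k=\Gamma(\Theta^k)$. Both $\bar\Theta^k$ solve the $\lambda_0$-system, so Lemma~\ref{lem:pert} applies at $\lambda_0\ge\lambda_*$ with $\underline\lambda=\lambda_*$, the same initial datum, and $\bar\Theta^1-\bar\Theta^2\in\HH$. The lemma gives
\[
\|\bar\Theta^1-\bar\Theta^2\|_{\HH}^2\le C_B\zeta^2\Big(\E|\delta g_T|^2+\E\!\int_0^T\big(|\delta b|^2+|\delta\sigma|_F^2+\Lnorm{\delta h}^2+|\delta f|^2\big)\diff t\Big),
\]
where the differences are taken along $\Theta^1,\Theta^2$. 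Step~1 of Lemma~\ref{lem:small}, i.e.\ \eqref{eq:coeffdiff}, bounds the integral by $L_\star^2\|\Theta^1-\Theta^2\|_{\HH}^2$. The terminal term satisfies $\E|\delta g_T|^2\le 4L^2\E|X^1_T-X^2_T|^2\le4L^2\|\Theta^1-\Theta^2\|_{\HH}^2$ by \eqref{eq:lipT} and \eqref{eq:couplingT}. Hence
\[
\|\Gamma(\Theta^1)-\Gamma(\Theta^2)\|_{\HH}^2\le C_B(L_\star^2+4L^2)\,\zeta^2\,\|\Theta^1-\Theta^2\|_{\HH}^2 .
\]
Choosing $\zeta_0:=\bigl(2\sqrt{C_B(L_\star^2+4L^2)}\bigr)^{-1}$ gives contraction constant $\tfrac12$, and Banach's theorem gives a fixed point. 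The constant $C_B$ is evaluated with $\underline\lambda=\lambda_*$, so $\zeta_0$ is independent of $\lambda_0$, as the statement requires.

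The main obstacle is the legitimacy of applying Lemma~\ref{lem:pert} to $\Gamma$'s outputs. The lemma asks for the solutions to satisfy \eqref{eq:forced} with square-integrable forcings, and its Step~2 uses monotonicity only of the $\lambda_0$-scaled coefficients along the outputs' own diagonal inputs. The $\zeta$-terms are frozen along the \emph{inputs} $\Theta^k$, so they are genuinely part of the forcing and are never touched by Assumption~\ref{ass:mono}. I will check that the forcing difference $\delta\tilde q$ is exactly $\zeta$ times the coefficient differences along the inputs, with $q$ cancelling. I also need the jump forcing to be $\mathcal P\otimes\mathcal E$-measurable, which follows from \eqref{eq:hborel} and \eqref{eq:coefmeas} via the representative of \eqref{eq:fubiniiso}, and the left-limit and law-flow issues are handled by Lemma~\ref{lem:lawflow}. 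Finally, iterating the lemma finitely many times from $\lambda_*$ (where Lemma~\ref{lem:small} applies) reaches $\lambda=1$, which is how Theorem~\ref{thm:cwp} will follow.
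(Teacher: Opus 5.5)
Your proposal is correct and is essentially the paper's proof. It uses the same augmented forcing $R(\Theta)$ and the same solution map supplied by $\mathsf{WP}(\lambda_0)$. The contraction constant $C_B\,C\,\zeta^2$ comes, as in the paper, from Lemma~\ref{lem:pert} at $\underline\lambda=\lambda_*$ combined with the Lipschitz and coupling bounds, so $\zeta_0$ is independent of $\lambda_0$. The only cosmetic difference is how uniqueness at $\lambda_0+\zeta$ is obtained: you take it directly from Lemma~\ref{lem:pert}, while the paper observes that every solution is a fixed point of the contraction. Both arguments are valid.
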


\begin{proof}
Fix $\lambda_0$ with $\mathsf{WP}(\lambda_0)$, set $\lambda_1=\lambda_0+\zeta\le1$, and
fix a forcing $q$ for the $\lambda_1$-system. For a trial $\Theta\in\HH$ define the
augmented forcing
\[
R^\psi(\Theta):=q^\psi+\zeta\,\psi(\cdot,\Theta,\mu^\Theta),\ \ \psi\in\{b,\sigma,h,f\},
\qquad R^g_T(\Theta):=q^g_T+\zeta\,g(X_T,\Law(X_T)).
\]
Then $R(\Theta)$ is
again a square-integrable forcing for \eqref{eq:forced}. Its integrability follows from
\eqref{eq:coeffint} and the Lipschitz/coupling bounds, and its $h$-channel is evaluated at
the predictable tuple $(\Theta^-,\mu^{\Theta,-})$, hence predictable.
Since $\mathsf{WP}(\lambda_0)$ holds, let $\Phi(\Theta):=\bar\Theta$ be the unique
solution in $\HH$ of the forced $\lambda_0$-system with forcing $R(\Theta)$.

For $\Theta^1,\Theta^2$, the outputs $\Phi(\Theta^k)$ solve the same $\lambda_0$-system
with sources $R(\Theta^1),R(\Theta^2)$; $q$ cancels in the difference, leaving
\[
R^\psi(\Theta^1)-R^\psi(\Theta^2)=\zeta[\psi(\cdot,\Theta^1,\mu^{\Theta^1})-\psi(\cdot,\Theta^2,\mu^{\Theta^2})].
\]
By Lemma~\ref{lem:pert} at parameter $\lambda_0$ and lower bound
$\underline\lambda=\lambda_*$, and the
Lipschitz bound of Assumption~\ref{ass:lip},
\[
\begin{aligned}
\|\Phi(\Theta^1)-\Phi(\Theta^2)\|_{\HH}^2
&\le C_B\!\sum_\psi\E\!\int_0^T|R^\psi(\Theta^1)-R^\psi(\Theta^2)|^2\diff t
+C_B\,\E|R^g_T(\Theta^1)-R^g_T(\Theta^2)|^2\\
&\le C_B\,C\,\zeta^2\,\|\Theta^1-\Theta^2\|_{\HH}^2 ,\qquad C=C(T,L) .
\end{aligned}
\]
Put $\zeta_0:=\min\{1,(2\sqrt{C_BC})^{-1}\}$, so that for $\zeta\le\zeta_0$
\[
\|\Phi(\Theta^1)-\Phi(\Theta^2)\|_{\HH}\le\tfrac12\,\|\Theta^1-\Theta^2\|_{\HH} .
\]
By the Banach fixed-point theorem $\Phi$ has a unique fixed point $\Theta$, at which
$R^\psi(\Theta)=q^\psi+\zeta\,\psi(\cdot,\Theta,\mu)$. Since $\lambda_0+\zeta=\lambda_1$,
\[
\begin{aligned}
\lambda_0\,\psi(\cdot,\Theta,\mu)+R^\psi(\Theta)&=\lambda_1\,\psi(\cdot,\Theta,\mu)+q^\psi ,
\qquad\psi\in\{b,\sigma,h,f\},\\
\lambda_0\,g(X_T,\Law(X_T))+R^g_T(\Theta)&=\lambda_1\,g(X_T,\Law(X_T))+q^g_T ,
\end{aligned}
\]
so $\Theta$ solves the forced $\lambda_1$-system \eqref{eq:forced} with forcing $q$.

Any solution of the $\lambda_1$-system is a fixed point of the same $\Phi$, which has only
one, so the solution is unique. Hence $\mathsf{WP}(\lambda_1)$, and $\zeta_0$ is
independent of $\lambda_0$ and $q$.
\end{proof}

\subsection{Proof of Theorem~\texorpdfstring{\ref{thm:cwp}}{3.5}}\label{sub:proofcwp}

\begin{proof}
By Lemma~\ref{lem:small}, $\mathsf{WP}(\lambda)$ holds for $\lambda\in[0,\lambda_*]$. With
$\zeta_0$ from Lemma~\ref{lem:cont}, set
\[
\lambda_k:=\min\{\lambda_*+k\zeta_0,\,1\},\qquad k\ge0 ,
\]
so that $\lambda_0=\lambda_*$, $\lambda_{k+1}-\lambda_k\le\zeta_0$ and $\lambda_k=1$ once
$k\ge\lceil(1-\lambda_*)/\zeta_0\rceil$. Lemma~\ref{lem:cont} carries
$\mathsf{WP}(\lambda_k)$ to $\mathsf{WP}(\lambda_{k+1})$ at every step, so
$\mathsf{WP}(1)$ holds; taking $q\equiv0$ there gives a unique solution of
\eqref{eq:mvfbsdej} in $\HH$.
\end{proof}

 \section{Examples}\label{sec:examples}

Theorem~\ref{thm:cwp} permits broad law dependence; the restrictive hypothesis is the
dissipativity of Assumption~\ref{ass:mono}, which in applications is the one to verify
and which cannot be traded for Lipschitz continuity alone. The jump system
of~\cite[Example~3.2]{LiMin} is linear, hence Lipschitz, and is shown there to admit no
adapted solution on the horizon $T=3\pi/4$.

The examples illustrate both the scope of the law dependence and the role of the
dissipativity assumption. Example~\ref{ex:concrete} exhibits two dissipative models
meeting every assumption exactly, both at arbitrary jump activity: (i)~a law-free affine base,
and (ii)~mean-field coefficients.
Examples~\ref{ex:brownian} and~\ref{ex:expfunc} recover the Brownian~\cite{BYZ} and the
expectation-functional~\cite{LiMin} subclasses as special cases.
Example~\ref{ex:fulllaw} then carries the law dependence beyond those subclasses, to genuinely
nonlinear functionals of the full tuple and of the jump integrand, and
Example~\ref{ex:dealer}, a non-perturbative full $U$-law model, is a
mean-field dealer market in which the law of the $\Lnu$-valued jump integrand arises as a
population law on valuation curves and enters the jump response through a monotone
crowding operator, so that well-posedness holds at every interaction strength.

\begin{example}[Dissipative models]\label{ex:concrete}\leavevmode\par\nopagebreak
\emph{(i) A law-free affine base.} Take
\[
\begin{gathered}
n=m=d=1,\qquad G=1,\qquad \nu\ \sigma\text{-finite},\qquad
\xi_0\in L^2(\Omega,\F_0;\R^n),\\
\sigma_c\in\R,\qquad h_c\in\Lnu,\qquad \beta_1,\beta_2>0 .
\end{gathered}
\]
Consider an agent who corrects an exogenous noise baseline through square-integrable
predictable controls $(a^b,a^\sigma,a^h)$, the jump control $a^h$ valued in $\Lnu$,
\[
\diff X_t=a^b_t\,\diff t+(\sigma_c+a^\sigma_t)\,\diff W_t
+\int_E\bigl(h_c(e)+a^h_t(e)\bigr)\,\Nt(\diff t,\diff e),\qquad X_0=\xi_0,
\]
at quadratic cost
\[
\E\Big[\tfrac12|X_T|^2+\int_0^T\Big(\tfrac{\beta_2}2|X_t|^2
+\tfrac1{2\beta_1}\bigl(|a^b_t|^2+|a^\sigma_t|^2+\Lnorm{a^h_t}^2\bigr)\Big)\diff t\Big].
\]
The Hamiltonian
\[
\begin{aligned}
H&=a^b y+(\sigma_c+a^\sigma)\,z+\int_E\bigl(h_c(e)+a^h(e)\bigr)u(e)\,\nu(\diff e)\\
&\qquad+\tfrac{\beta_2}2|x|^2+\tfrac1{2\beta_1}\bigl(|a^b|^2+|a^\sigma|^2+\Lnorm{a^h}^2\bigr)
\end{aligned}
\]
is strictly convex in the controls, with pointwise minimiser
\[
a^{b*}=-\beta_1 y,\qquad a^{\sigma*}=-\beta_1 z,\qquad a^{h*}(e)=-\beta_1 u(e),
\]
so by the stochastic maximum principle for jump diffusions~\cite{TangLi} the Pontryagin
optimality system consists of the closed-loop state dynamics and the adjoint equation
with driver $\partial_xH=\beta_2x$ and terminal value $X_T$; this is
\eqref{eq:mvfbsdej} with the affine coefficients
\[
\begin{aligned}
b&=-\beta_1 y, &\qquad \sigma&=\sigma_c-\beta_1 z, &\qquad h(e)&=h_c(e)-\beta_1 u(e),\\
f&=\beta_2 x, &\qquad g(x,\rho)&=x .
\end{aligned}
\]
Thus $\beta_1$ is the reciprocal of the quadratic actuation cost, $\beta_2$ the running
state weight, the terminal weight is $1$, and $\sigma_c,h_c$ are the uncontrolled baseline.
With these coefficients \eqref{eq:mvfbsdej} reads
\[
\left\{\;\begin{aligned}
X_t&=\xi_0-\beta_1\!\int_0^t\! Y_s\,\diff s+\int_0^t\!(\sigma_c-\beta_1 Z_s)\,\diff W_s
+\int_0^t\!\!\int_E(h_c(e)-\beta_1 U_s(e))\,\Nt(\diff s,\diff e),\\
Y_t&=X_T+\beta_2\!\int_t^T\! X_s\,\diff s-\int_t^T\! Z_s\,\diff W_s
-\int_t^T\!\!\int_E U_s(e)\,\Nt(\diff s,\diff e).
\end{aligned}\right.
\]
The coefficients are affine, hence Lipschitz, and their origin values, the constants
$\sigma_c$ and $h_c\in\Lnu$, are square-integrable in time over the finite horizon, so
Assumptions~\ref{ass:integ} and~\ref{ass:lip} hold with
$L=\max\{\beta_1,\beta_2,1\}$, where the $1$ covers the terminal map $g$. The matrix $G=1$ is
invertible, so Assumption~\ref{ass:G} holds with $c_G=1$. Since $m=n$,
both rank branches of Assumption~\ref{ass:mono} are available.

Substituting the coefficients, the interior and terminal pairings of \eqref{eq:mono} evaluate
exactly to
\[
\begin{aligned}
&\E\big[\inner{\delta b}{\delta Y}+\inner{\delta\sigma}{\delta Z}+\inner{-\delta f}{\delta X}
+\textstyle\int_E\inner{\delta h(e)}{\delta U(e)}\nu(\diff e)\big]\\
&\qquad=-\beta_1\E\big[|\delta Y|^2+|\delta Z|_F^2+\Lnorm{\delta U}^2\big]-\beta_2\E|\delta X|^2,\\
&\E\inner{\delta g_T}{\delta X_T}=\E|\delta X_T|^2 .
\end{aligned}
\]
Assumption~\ref{ass:mono} holds with equality and $\alpha=1$, so Theorem~\ref{thm:cwp}
yields well-posedness.

The identity is sharp for the constants $\beta_1,\beta_2$, so by Remark~\ref{rem:margin} it
carries any margin $0<c_{\mathrm{diss}}<\min\{\beta_1,\beta_2\}$, unweighted here because
$G=1$.

By Remark~\ref{rem:beta1-dich}, \emph{every} instance with $\beta_1>0$ must carry strictly
$U$-dissipative jump feedback. The affine coefficient $h(e)=h_c(e)-\beta_1u(e)$ realises that
dissipativity with equality,
\[
\int_E\inner{\delta h(e)}{\delta U(e)}\,\nu(\diff e)=-\beta_1\Lnorm{\delta U}^2 .
\]

The baseline also realises the example of Proposition~\ref{prop:strict}: replacing its
drift $-\beta_1y$ by \eqref{eq:bmean} leaves \eqref{eq:mono} intact with constants
$(\beta_1-\varepsilon,\beta_2)$, so Theorem~\ref{thm:cwp} continues to apply.

\smallskip
\emph{(ii) Mean-field terms at arbitrary jump activity.} Write $\theta=(x,y,z,u)$ for a point
of $\mathcal H$ and $\pi_\Psi(\theta)$ for its component indexed by
$\Psi\in\{X,Y,Z,U\}$. The marginal
means of $\mu$ and the barycentre of a terminal law $\rho$ are
\[
m_\Psi(\mu):=\int_{\mathcal H}\pi_\Psi(\theta)\,\mu(\diff\theta),\qquad
\mathfrak b(\rho):=\int_{\R^n}x\,\rho(\diff x) .
\]
Take
\[
\begin{aligned}
b&=-2y-m_Y(\mu), &\qquad \sigma&=-2z-m_Z(\mu), &\qquad h(e)&=-2u(e)-m_U(\mu)(e),\\
f&=2x+m_X(\mu), &\qquad g(x,\rho)&=2x+\mathfrak b(\rho) .
\end{aligned}
\]
Keep $n=m=d=1$ and $G=1$ from (i). Set
\[
\delta m_\Psi:=m_\Psi(\mu)-m_\Psi(\mu'),\qquad
\delta m:=\big(\delta m_X,\delta m_Y,\delta m_Z,\delta m_U\big)
=\int_{\mathcal H}\theta\,\diff\mu-\int_{\mathcal H}\theta\,\diff\mu' ,
\]
so that $\delta m$ is a single element of $\mathcal H$.

Both $\delta m$ and $\mathfrak b(\rho)$ are integrals of the identity map, which is
$1$-Lipschitz, so \eqref{eq:lipfun} applies on $\mathcal P_2(\mathcal H)$ and on
$\mathcal P_2(\R^n)$. Cauchy--Schwarz across the four coordinates of $\mathcal H$ then splits
the first bound into its components:
\[
\begin{gathered}
|\delta m|_{\mathcal H}\le W_2(\mu,\mu') ,\qquad
|\mathfrak b(\rho)-\mathfrak b(\rho')|\le W_2(\rho,\rho') ,\\[1.4ex]
|\delta m_X|+|\delta m_Y|+|\delta m_Z|_F+\Lnorm{\delta m_U}
\le 2\,|\delta m|_{\mathcal H}\le 2\,W_2(\mu,\mu') .
\end{gathered}
\]
Thus Assumption~\ref{ass:lip} holds with $L=2$. The coefficients $b,\sigma,h,f$ vanish at $(0,\delta_0)$, so Assumption~\ref{ass:integ} is
immediate.

Along a diagonal pair the measure argument is the law of the realised tuple, so
$\E\,\delta\Psi$ is deterministic and, for $\Psi\in\{X,Y,Z,U\}$,
\[
m_\Psi(\Law(\Theta))=\E\Psi,\qquad
\delta m_\Psi=\E\,\delta\Psi,\qquad
\E\inner{\E\,\delta\Psi}{\delta\Psi}=|\E\,\delta\Psi|^2 .
\]
Substituting the coefficients and applying these identities, each channel contributes its own
mean square,
\[
\begin{aligned}
\E\inner{-\delta f}{\delta X}
&=\E\inner{-2\,\delta X-\E\,\delta X}{\delta X}=-2\,\E|\delta X|^2-|\E\delta X|^2,\\
\E\inner{\delta b}{\delta Y}
&=\E\inner{-2\,\delta Y-\E\,\delta Y}{\delta Y}=-2\,\E|\delta Y|^2-|\E\delta Y|^2,\\
\E\inner{\delta\sigma}{\delta Z}
&=\E\inner{-2\,\delta Z-\E\,\delta Z}{\delta Z}=-2\,\E|\delta Z|_F^2-|\E\delta Z|_F^2,\\
\E\!\int_E\!\inner{\delta h(e)}{\delta U(e)}\,\nu(\diff e)
&=\E\inner{-2\,\delta U-\E\,\delta U}{\delta U}_{\Lnu}
=-2\,\E\Lnorm{\delta U}^2-\Lnorm{\E\delta U}^2 .
\end{aligned}
\]
Since the mean squares are nonnegative, summing gives the interior pairing
\[
\begin{aligned}
&-2\,\E\big[|\delta X|^2+|\delta Y|^2+|\delta Z|_F^2+\Lnorm{\delta U}^2\big]\\
&\qquad-\big(|\E\delta X|^2+|\E\delta Y|^2+|\E\delta Z|_F^2+\Lnorm{\E\delta U}^2\big)\\
&\qquad\le-2\,\E\big[|\delta X|^2+|\delta Y|^2+|\delta Z|_F^2+\Lnorm{\delta U}^2\big]
\end{aligned}
\]
and the terminal pairing
\[
\E\inner{\delta g_T}{\delta X_T}
=\E\inner{2\,\delta X_T+\E\,\delta X_T}{\delta X_T}
=2\,\E|\delta X_T|^2+|\E\delta X_T|^2
\ge2\,\E|\delta X_T|^2 .
\]
Assumption~\ref{ass:mono} therefore holds with $\alpha=\beta_1=\beta_2=2$. The mean-field
coupling costs nothing and supplies extra dissipation.

These are the coefficients of~\cite[Example~3.1]{LiMin}, carried over here to an
arbitrary $\sigma$-finite $\nu$, including $\nu(E)=\infty$: the monotonicity hypothesis of
their well-posedness theorem~\cite[(H3.2) and Theorem~3.1]{LiMin} bounds the total mass of
their L\'evy measure, whereas the verification above uses only $U\in\Lnu$.

\end{example}

\begin{example}[Brownian reduction]\label{ex:brownian}
Remove the Poisson channel from the stochastic basis, so that the filtration is generated by
$W$ and $\F_0$ and the representation property is the Brownian one. Delete the $U$-coordinate
from the tuple, reducing $\mathcal H$ to $\R^n\times\R^m\times\R^{m\times d}$. Merely setting
$h\equiv0$ on a Brownian--Poisson basis is not a reduction, since the backward Poisson integral
and its integrand $U$ persist in the representation.

The assumptions reduce to $W_2$-Lipschitz joint-law dependence of $(X,Y,Z)$ under
$G$-monotonicity, the condition imposed in~\cite[(A1)]{BYZ} with rectangular full-rank $G$.
With no jump channel the $U$-dissipativity constraint of
Remark~\ref{rem:beta1-dich} is vacuous, so both rank branches of Assumption~\ref{ass:mono}
are available.
\end{example}

\begin{example}[Expectation-functional interactions]\label{ex:expfunc}
Let $\bar\phi:[0,T]\times\mathcal H\times\mathcal H\to\R^k$ be measurable and Lipschitz in
$(\theta,\theta')$ uniformly in $t$, and set
\[
\phi(t,\theta,\mu):=\int_{\mathcal H}\bar\phi(t,\theta,\theta')\,\mu(\diff\theta') .
\]
At each fixed $t$ and $\theta$ this is the Bochner mean of the kernel
$\bar\phi(t,\theta,\cdot)$, whose Lipschitz constant is at most $\mathrm{Lip}(\bar\phi)$, so
\eqref{eq:lipfun} supplies the law-dependence component of Assumption~\ref{ass:lip}. Its
state-variable Lipschitz and integrability requirements are imposed on the full coefficient
system.

This class contains the deterministic-coefficient, finite-activity independent-copy
interactions of~\cite{LiMin}. Their L\'evy measure is the $\nu$ of the present setting. Let
$\bar\psi$ be the two-point kernel of their drift, diffusion or driver, and $\bar h$ that of
their jump coefficient. Both are assumed Lipschitz in all arguments except $t$, with
constant uniform in $t$, and with square-integrable origin values,
$\bar\psi(\cdot,0,\dots,0)\in L^2(0,T)$, the counterpart of the origin integrability
imposed in~\cite[(H3.1)]{LiMin}.
The pointwise-jump form embeds via
\[
\bar\phi(t,\theta,\theta')=\int_E\bar\psi\bigl(t,x,y,z,u(e),x',y',z',u'(e)\bigr)\nu(\diff e).
\]
The integral is finite at every $(\theta,\theta')$: the Lipschitz bound on $\bar\psi$ and
Cauchy--Schwarz in $L^2(\nu)$ give
$\int_E\bigl(|u(e)|+|u'(e)|\bigr)\nu(\diff e)\le\nu(E)^{1/2}\bigl(\Lnorm{u}+\Lnorm{u'}\bigr)$,
and the origin term contributes $\nu(E)\,|\bar\psi(t,0,\dots,0)|$. The same bounds give,
with a constant finite when $\nu(E)<\infty$,
\[
\begin{aligned}
&|\bar\phi(t,\theta_1,\theta_1')-\bar\phi(t,\theta_2,\theta_2')|\\
&\qquad\le\mathrm{Lip}(\bar\psi)\max\{\nu(E),\nu(E)^{1/2}\}
\bigl(|\theta_1-\theta_2|_{\mathcal H}+|\theta_1'-\theta_2'|_{\mathcal H}\bigr),
\end{aligned}
\]
so $\bar\phi$ is Lipschitz on $\mathcal H\times\mathcal H$. The pointwise values $u(e),u'(e)$ of the
$L^2(\nu)$-components enter only under the $\nu$-integral, so the functional is
well defined on equivalence classes: changing $u$ on a $\nu$-null set changes the
integrand on a $\nu$-null set of marks.

In their jump coefficient the mark $e$ is the integrator's and is not $\nu$-integrated, so
that coefficient embeds instead through the Nemytskii map
\[
[\mathbf H_t(\theta,\theta')](e):=\bar h\bigl(t,x,y,z,u(e),x',y',z',u'(e),e\bigr)
\quad\text{for $\nu$-a.e.\ }e .
\]
This map is again well defined on equivalence classes. Assume it jointly Borel, and impose the
natural quantitative hypotheses
\[
\begin{aligned}
\|\mathbf H_t(\theta_1,\theta_1')-\mathbf H_t(\theta_2,\theta_2')\|_{L^2(\nu)}
&\le C(|\theta_1-\theta_2|_{\mathcal H}+|\theta_1'-\theta_2'|_{\mathcal H}),\\
\int_0^T\|\mathbf H_t(0,0)\|_{L^2(\nu)}^2\,\diff t&<\infty ,
\end{aligned}
\]
so that $\mathbf H_t$ is Lipschitz into $L^2(\nu;\R^n)$ and \eqref{eq:lipfun} applies,
with that space in the role of $K$, to
\[
h(t,\theta,\mu):=\int_{\mathcal H}\mathbf H_t(\theta,\theta')\,\mu(\diff\theta') .
\]
\end{example}

\begin{example}[Nonlinear law functionals]\label{ex:fulllaw}
Fix
\[
\begin{aligned}
\mu_0&\in\mathcal P_2(\mathcal H), &\quad \Lambda_0&\in\mathcal P_2(\Lnu),
&\quad \varepsilon&>0,\\
\mathfrak q,\tilde{\mathfrak q},\hat{\mathfrak q}&:\R_+\to\R, &\quad Q&:\R^J\to\R,
&\quad \varphi_1,\dots,\varphi_J&:\Lnu\to\R,\\
w_0,w_1&\in\R^m, &\quad v_0&\in\R^n, &\quad \hat h&\in L^2(\nu;\R^n),
\end{aligned}
\]
with $\mathfrak q,\tilde{\mathfrak q},\hat{\mathfrak q},Q$ and the features $\varphi_j$ bounded and Lipschitz, and
$w_0,w_1,v_0,\hat h$ the perturbation directions.

Let the baseline generator $(b_0,\sigma_0,h_0,f_0,g)$ be as in Corollary~\ref{cor:robust}:
$m=n$, so that the full-rank $G$ is invertible; Assumptions~\ref{ass:integ}--\ref{ass:mono}
hold; and there is a dissipativity margin $c_{\mathrm{diss}}>0$ in the unweighted form
\eqref{eq:monomarginu}. Example~\ref{ex:concrete}(i) supplies such a baseline. Perturb by
\begin{align*}
f(t,\theta,\mu)&=f_0(t,\theta)+\varepsilon\,\mathfrak q\bigl(W_2(\mu,\mu_0)\bigr)w_0
+\varepsilon\,Q\biggl(\int_{\Lnu}\varphi_1\,\diff\mu^U,\dots,\int_{\Lnu}\varphi_J\,\diff\mu^U\biggr)w_1,\\
b(t,\theta,\mu)&=b_0(t,\theta)+\varepsilon\,\tilde{\mathfrak q}\bigl(W_2(\mu^U,\Lambda_0)\bigr)v_0,\\
h(t,\theta,e,\mu)&=h_0(t,\theta,e)+\varepsilon\,\hat{\mathfrak q}\bigl(W_2(\mu^U,\Lambda_0)\bigr)\hat h(e) .
\end{align*}
The baseline is taken law-free for simplicity: a law-dependent dissipative baseline
works identically.

The law functionals are of three types:
\begin{enumerate}[label=(\roman*)]
\item $\mathfrak q(W_2(\mu,\mu_0))$, a nonlinear \emph{radial} statistic of the full tuple law:
  a function of its $W_2$-distance to a fixed reference measure alone;
\item $\tilde{\mathfrak q}(W_2(\mu^U,\Lambda_0))$ and $\hat{\mathfrak q}(W_2(\mu^U,\Lambda_0))$, the same for the
  $U$-marginal, the latter entering the \emph{jump coefficient itself}; and
\item the $Q$-term, a nonlinear \emph{cylindrical} functional of the $U$-marginal: finitely
  many integrals of arbitrary bounded-Lipschitz features on the infinite-dimensional fibre
  $\Lnu$, composed through the nonlinear $Q$.
\end{enumerate}
All three lie beyond the expectation and two-point-kernel jump-integrand interactions of the
preceding examples, and beyond the linear jump-integrand expectations of the cited models. All
are $W_2$-Lipschitz in $\mu$. The radial statistic has constant $\mathrm{Lip}(\mathfrak q)$, by the
triangle inequality for $W_2$. The $U$-marginal statistics have constants
$\mathrm{Lip}(\tilde{\mathfrak q})$ and $\mathrm{Lip}(\hat{\mathfrak q})$, since marginal projection is
$1$-Lipschitz for $W_2$ by \eqref{eq:margproj}. Each cylindrical feature is the Bochner mean of
the kernel $\theta'\mapsto\varphi_j(u')$, so \eqref{eq:lipfun} applies and the $Q$-term has
constant
\[
L_Q:=\mathrm{Lip}(Q)\Bigl(\sum_{j=1}^J\mathrm{Lip}(\varphi_j)^2\Bigr)^{1/2} .
\]
The perturbations are bounded multiples of the fixed vectors $w_0,w_1,v_0$ and of
$\hat h\in L^2(\nu;\R^n)$, so they satisfy the origin-integrability condition of
Assumption~\ref{ass:integ}.
They are independent of $\theta$ and satisfy the $W_2$-Lipschitz bound \eqref{eq:pertlip}
with constant
\[
L_p:=\varepsilon\bigl(|w_0|\,\mathrm{Lip}(\mathfrak q)+|v_0|\,\mathrm{Lip}(\tilde{\mathfrak q})+|w_1|\,L_Q
+\Lnorm{\hat h}\,\mathrm{Lip}(\hat{\mathfrak q})\bigr) ,
\]
hence the diagonal Lipschitz condition of Assumption~\ref{ass:lip} as well.

By \eqref{eq:pertlip} and Corollary~\ref{cor:robust}, \eqref{eq:pertpair} holds with
$c_{\mathrm{pert}}=\|G\|L_p$, so the perturbed system is well-posed, with margin
$c_{\mathrm{diss}}-\|G\|L_p$, whenever
\begin{equation}\label{eq:fulllaw-eps}
\varepsilon\,\|G\|\bigl(|w_0|\,\mathrm{Lip}(\mathfrak q)+|v_0|\,\mathrm{Lip}(\tilde{\mathfrak q})+|w_1|\,L_Q
+\Lnorm{\hat h}\,\mathrm{Lip}(\hat{\mathfrak q})\bigr)\ <\ c_{\mathrm{diss}} .
\end{equation}
The smallness in \eqref{eq:fulllaw-eps} binds the law-dependence increment alone; the
forward--backward coupling of the baseline is unrestricted, the opposite trade to the
weak-coupling route surveyed in the introduction.
\end{example}

\begin{example}[A non-perturbative full $U$-law model: the mean-field dealer market]\label{ex:dealer}
Read the control problem of Example~\ref{ex:concrete}(i) as a dealer managing a one-factor
marked-to-market exposure $X$: requests for quotes arrive as the marked points of $N$, the
mark $e$ recording side and size, and a fill of mark $e$ moves the exposure through the
jump channel. The Brownian term is the dealer's residual hedging risk; marking to market
removes the common reference price from the strategic problem. The adjoint component $U_t$ is then the dealer's \emph{valuation curve}:
$U_t(e)$ is the marginal continuation value of a fill of mark $e$, so $U_t$ is an element
of $\Lnu$ indexed by side and size, and a population of dealers carries a law on curves,
$\mu^U\in\mathcal P_2(\Lnu)$. Quotes are affine functions of this curve, so the
population's quote-curve distribution is an affine image of $\mu^U$, and an anonymous
routing mechanism that compares each dealer's curve with the population's tilts the filled
flow by the \emph{crowding correction}
\[
\mathcal A_\lambda(u):=\int_{\Lnu}\nabla V(u-v)\,\lambda(\diff v),\qquad
V(w):=\sqrt{1+\Lnorm{w}^2}-1,\qquad \lambda\in\mathcal P_2(\Lnu).
\]
The potential $V$ is even and convex, with
\[
\nabla V(w)=\frac{w}{\sqrt{1+\Lnorm{w}^2}},\qquad \Lnorm{\nabla V(w)}\le1,\qquad
\mathrm{Lip}(\nabla V)\le1,
\]
the last because the second derivative of $V$ has operator norm at most one.
Fix $\varepsilon\ge0$ and tilt the jump response of Example~\ref{ex:concrete}(i) by the
crowding correction:
\[
b=-\beta_1 y,\qquad \sigma=\sigma_c-\beta_1 z,\qquad
h=h_c-\beta_1 u-\varepsilon\,\mathcal A_{\mu^U}(u),\qquad
f=\beta_2 x,\qquad g(x,\rho)=x ,
\]
with $\mu^U$ the $U$-marginal of the measure argument. At $\varepsilon=0$ this is
Example~\ref{ex:concrete}(i) verbatim.

\smallskip
\emph{Assumptions~\ref{ass:integ}--\ref{ass:G}.} Since $\nabla V(0)=0$, the origin value
$h(\cdot,0,\delta_0)=h_c$ is unchanged, so Assumption~\ref{ass:integ} holds as in
Example~\ref{ex:concrete}(i). For $u,u'\in\Lnu$ and
$\lambda,\lambda'\in\mathcal P_2(\Lnu)$, coupling $(v,v')$ optimally for
$(\lambda,\lambda')$ and using that $\nabla V$ is $1$-Lipschitz,
\[
\Lnorm{\mathcal A_\lambda(u)-\mathcal A_{\lambda'}(u')}
\ \le\ \Lnorm{u-u'}+W_2(\lambda,\lambda') ,
\]
and $W_2(\mu^U,\mu'^U)\le W_2(\mu,\mu')$ by \eqref{eq:margproj}, so
Assumption~\ref{ass:lip} holds with $L=\max\{\beta_1+\varepsilon,\beta_2,1\}$: the
interaction strength enlarges the Lipschitz constant only. Joint Lipschitz continuity of
$(u,\lambda)\mapsto\mathcal A_\lambda(u)$ gives the Borel
measurability~\eqref{eq:hborel}. Assumption~\ref{ass:G} holds as in
Example~\ref{ex:concrete}(i).

\smallskip
\emph{Monotonicity at arbitrary $\varepsilon$.} Let
$(\Theta,\mu),(\Theta',\mu')\in\mathcal D_t$; taking $U$-marginals of $\mu=\Law(\Theta)$
and $\mu'=\Law(\Theta')$, $\mu^U=\Law(U)$ and $\mu'^U=\Law(U')$. Let
$(\widehat U,\widehat U')$ be an independent copy of $(U,U')$: then $\widehat U$ has law
$\mu^U$ and is independent of $(U,U')$, so
\[
\mathcal A_{\mu^U}(U)=\int_{\Lnu}\nabla V(U-v)\,\mu^U(\diff v)
=\E\bigl[\nabla V(U-\widehat U)\,\big|\,U\bigr],
\]
and likewise for $(U',\mu'^U)$. Since
exchanging $(U,U')$ with $(\widehat U,\widehat U')$ preserves the joint law while
$\nabla V$ is odd,
\[
\begin{aligned}
\E\inner{\mathcal A_{\mu^U}(U)-\mathcal A_{\mu'^U}(U')}{U-U'}
&\ =\ \E\inner{\nabla V(U-\widehat U)-\nabla V(U'-\widehat U')}{U-U'}\\
&\ =\ -\,\E\inner{\nabla V(U-\widehat U)-\nabla V(U'-\widehat U')}{\widehat U-\widehat U'} .
\end{aligned}
\]
Averaging the two lines,
\[
\begin{aligned}
&\E\inner{\mathcal A_{\mu^U}(U)-\mathcal A_{\mu'^U}(U')}{U-U'}\\
&\qquad=\tfrac12\,\E\inner{\nabla V(U-\widehat U)-\nabla V(U'-\widehat U')}
{(U-\widehat U)-(U'-\widehat U')}\ \ge\ 0,
\end{aligned}
\]
the final inequality because $\nabla V$, the gradient of the convex $V$, is monotone.
The $\varepsilon$-term therefore improves the interior pairing,
\[
\E\int_E\inner{\delta h(e)}{\delta U(e)}\,\nu(\diff e)\ \le\ -\beta_1\,\E\Lnorm{\delta U}^2 ,
\]
and \eqref{eq:mono} holds with the constants $\alpha=1,\beta_1,\beta_2$ of
Example~\ref{ex:concrete}(i) for \emph{every} $\varepsilon\ge0$ --- no smallness
condition and no margin. Theorem~\ref{thm:cwp} applies directly.

\smallskip
\emph{Beyond the mean.} The crowding correction is not a function of $\E[U]$: for a
unit $w\in\Lnu$, the two-point laws $\lambda_1=\tfrac12(\delta_{-w}+\delta_{w})$ and
$\lambda_2=\tfrac12(\delta_{-2w}+\delta_{2w})$ share their mean, yet
\[
\mathcal A_{\lambda_1}(w)=\tfrac{1}{\sqrt5}\,w,\qquad
\mathcal A_{\lambda_2}(w)=\tfrac12\Bigl(\tfrac{3}{\sqrt{10}}-\tfrac1{\sqrt2}\Bigr)w .
\]

\smallskip
\emph{Marks and activity.} Take $E=\{-1,+1\}\times(0,1]$, writing $e=(\varsigma,q)$ for
side $\varsigma$ and size $q$, with
$\nu(\diff\varsigma,\diff q)=c_\varsigma\,q^{-1-\varrho}\,\diff q$ for constants
$c_\varsigma>0$ and $0<\varrho<2$: then $\nu(E)=\infty$ while
$\int_E q^2\,\nu(\diff\varsigma,\diff q)<\infty$, so $h_c(\varsigma,q):=\varsigma q$ lies
in $\Lnu$ --- infinitely many small tickets with square-integrable aggregate exposure. In
this model the infinite-activity regime is the market's small-ticket limit.

The finite-player inspiration is the dealer market of Cont and
Xiong~\cite{ContXiong}, in which dealers' execution probabilities depend on their own
and competing quotes; the present example is a stylised mean-field model inspired by that
setting.
\end{example}

\begin{remark}[Scope of the measure argument]\label{rem:freelaw}
The proofs consume the tuple law through a single mechanism: the $W_2$ term of
Assumption~\ref{ass:lip}, dominated at every use by the synchronous coupling
bound~\eqref{eq:coupling}. The right-hand side of \eqref{eq:coupling} carries
$\E\Lnorm{\delta U}^2$ whether or not the measure argument reads the $U$-marginal,
because the coefficients depend on $u$ pointwise --- a dependence
Remark~\ref{rem:beta1-dich} makes unavoidable whenever $\beta_1>0$. Nothing is therefore gained by restricting the
coefficients to the $(X,Y,Z)$-marginal law: with the coupling bound taken in
$\mathcal H$, the estimates close at the full tuple law, and Example~\ref{ex:dealer}
exhibits that scope arising from a routing mechanism rather than by construction.
\end{remark}

\begin{remark}[Perturbative and non-perturbative law dependence]\label{rem:regimes}
Example~\ref{ex:fulllaw} shows that arbitrary nonlinear full-tuple law dependence can be
added when it is smaller than the dissipativity margin. Example~\ref{ex:dealer} shows
that a genuinely nonlinear $U$-law interaction can have arbitrary strength when its
geometry is itself monotone.
\end{remark}

 \section{Conclusion}\label{sec:conclusion}

Theorems~\ref{thm:stab}, \ref{thm:uniq} and~\ref{thm:cwp} establish global well-posedness, with a quantitative
stability estimate, for fully coupled MV-FBSDEJ with $W_2$-dependence on the law of the whole
tuple, including the $\Lnu$-valued jump integrand, at arbitrary $\sigma$-finite jump activity.
By Corollary~\ref{cor:robust}, the well-posedness is robust, at $m=n$, under full-tuple-law
perturbations below the dissipativity margin. The Lipschitz and monotonicity assumptions are imposed only along the diagonal inputs
$(\Theta_t,\Law(\Theta_t))\in\mathcal D_t$:
Proposition~\ref{prop:strict} shows that expected diagonal $G$-monotonicity is strictly
weaker than its pointwise form (Remark~\ref{rem:diag-mono}). The monotonicity of
Assumption~\ref{ass:mono} is the principal structural assumption and is still coercive; in the $m<n$ regime it
forces strictly $U$-dissipative jump feedback (Remark~\ref{rem:beta1-dich}). Relaxing that coercivity, so that additive jump coefficients
become admissible when $m<n$, and treating domination-monotonicity
conditions~\cite{TianYu,WuHao} or conditional laws under common noise, are the natural next
questions.

 \clearpage
\appendix
\section{Predictable representatives and the law flow}\label{app:lawflow}

\begin{lemma}[Predictable representatives and the full-tuple law flow]\label{lem:lawflow}
Let $X,Y$ be c\`adl\`ag adapted and let $Z,U$ be fixed predictable versions, with
\[
\E\int_0^T\Bigl(|X_t|^2+|Y_t|^2+|Z_t|_F^2+\int_E|U_t(e)|^2\,\nu(\diff e)\Bigr)\diff t
<\infty .
\]
Then the a.e.-defined maps
$t\mapsto\Law(\Theta_t^-)$ and $t\mapsto\Law(\Theta_t)$ admit Borel representatives,
unique up to Lebesgue-null sets of times,
\[
\mu^-,\,\mu:[0,T]\to(\mathcal P_2(\mathcal H),W_2),
\qquad
\mu_t^-=\mu_t\quad\text{for a.e.\ }t .
\]

Coefficient evaluations along these representatives are invariant, up to
$\diff t\otimes\diff\Prob$-null sets, under the choice of predictable representatives of
$(Z,U)$ and of the Borel representatives.
\end{lemma}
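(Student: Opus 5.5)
The plan is to realise $\Theta$ and $\Theta^-$ as jointly measurable maps on $[0,T]\times\Omega$ into the separable Hilbert space $\mathcal H$. Then Borel measurability of the law flow follows from a general fact about laws of measurable random fields, and the a.e.\ equality $\mu^-=\mu$ follows from countability of jumps.

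\emph{Step 1: joint measurability.} $X,Y$ are c\`adl\`ag adapted, hence progressively measurable. By the Fubini isometry \eqref{eq:fubiniiso}, the predictable $U$ (a $\mathcal P\otimes\mathcal E$-measurable function) defines a $\mathcal P$-measurable $\Lnu$-valued map $(t,\omega)\mapsto U_t(\omega)$. Tonelli and weak measurability with the Pettis theorem, exactly as for \eqref{eq:hborel}, give strong measurability in the separable space $\Lnu$. Hence $(t,\omega)\mapsto\Theta_t(\omega)$ and $(t,\omega)\mapsto\Theta^-_t(\omega)$ are $\mathcal B([0,T])\otimes\F$-measurable into $\mathcal H$. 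By Fubini, the integrability hypothesis gives $\E|\Theta_t|_{\mathcal H}^2<\infty$ off a Lebesgue-null set $N_0$, and similarly for $\Theta^-$. On $N_0$ we set both flows equal to $\delta_0$.

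\emph{Step 2: Borel law flow.} The Borel $\sigma$-field of $(\mathcal P_2(\mathcal H),W_2)$, a Polish space since $\mathcal H$ is, is generated by the maps $\mu\mapsto\int\varphi\,\diff\mu$ with $\varphi$ bounded continuous together with $\mu\mapsto\int|\theta|^2\diff\mu$. Equivalently, one can use a countable family of bounded Lipschitz $\varphi$ together with the second moment. For each such $\varphi$, the map $t\mapsto\E\varphi(\Theta_t)$ is Borel by Fubini, and $t\mapsto\E|\Theta_t|^2$ is Borel as well. This makes $t\mapsto\Law(\Theta_t)$ Borel into $\mathcal P_2(\mathcal H)$.

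The main obstacle is this identification of $\sigma$-fields: weak-topology measurability together with moment measurability must yield $W_2$-Borel measurability. I would handle it by citing that $W_2$-convergence is weak convergence plus convergence of second moments (Villani, Definition~6.8 and Theorem~6.9). In a Polish space, this makes $\mathcal B(W_2)$ the trace of the weak Borel $\sigma$-field on the Borel subset $\mathcal P_2$, refined by the second-moment map. Uniqueness up to null times is immediate, since any two representatives agree wherever $\Theta_t\in L^2$, which holds off $N_0$.

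\emph{Step 3: $\mu^-=\mu$ a.e.\ and invariance.} For each $\omega$, the c\`adl\`ag paths of $X$ and $Y$ jump at countably many times, so $\{(t,\omega):\Theta^-_t\neq\Theta_t\}$ is product-measurable with Lebesgue-null $\omega$-sections. By Fubini, for a.e.\ $t$ we have $\Theta^-_t=\Theta_t$ a.s., hence $\mu^-_t=\mu_t$. Now let $(Z',U')$ be another predictable version. It agrees with $(Z,U)$ off a $\diff t\otimes\diff\Prob$-null set, or a $\diff t\otimes\diff\Prob\otimes\nu$-null set for $U$, which by Fubini means $U'_t=U_t$ in $\Lnu$ off a $\diff t\otimes\diff\Prob$-null set. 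Applying Fubini again, for a.e.\ $t$ we get $\Theta'_t=\Theta_t$ a.s., so the laws coincide for a.e.\ $t$. It follows that $\psi(t,\Theta'_t,\mu'_t)=\psi(t,\Theta_t,\mu_t)$ off a $\diff t\otimes\diff\Prob$-null set, and the same holds for $h$ at the predictable tuples. Changing the Borel representative of the flow alters it only on a null time set, which is again $\diff t\otimes\diff\Prob$-null.
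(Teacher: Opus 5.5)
Your proposal is correct. Steps~1 and~3 agree with the paper: joint measurability of $\Theta,\Theta^-$ via the Fubini isometry and Pettis, $\mu^-_t=\mu_t$ from countably many jumps plus Fubini, and invariance under a change of versions. The genuine difference is how you obtain Borel measurability of the flow.

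The paper never describes the Borel $\sigma$-field of $(\mathcal P_2(\mathcal H),W_2)$. It reads $t\mapsto\Theta^-_t$ as an element of $L^2(0,T;L^2(\Omega;\mathcal H))$ through the Bochner--Fubini isometry. It then uses Pettis to get a Borel version with values in a separable subspace of the possibly non-separable space $L^2(\Omega;\mathcal H)$. Finally it composes with the law map, which is $1$-Lipschitz into $W_2$ by the synchronous coupling bound \eqref{eq:coupling}, so measurability is inherited from a continuous map.

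You trade that vector-valued machinery for scalar Fubini applied to $t\mapsto\E\varphi(\Theta_t)$ and $t\mapsto\E|\Theta_t|_{\mathcal H}^2$. As a small bonus, your flow equals $\Law(\Theta_t)$ at every $t$ outside the explicit Borel set $N_0$. The cost is the $\sigma$-field identification you flag, and that identification is sound. The topology generated by the test integrals and the second moment is metrizable through $\mu\mapsto(\mu,\int|\theta|_{\mathcal H}^2\,\diff\mu)$. It has the same convergent sequences as $W_2$ by Villani's Theorem~6.9, and it is second countable, so every open set is a countable union of basic sets.

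The argument can be shortened. The second-moment map is weakly lower semicontinuous, hence already weakly Borel, so no ``refinement'' is needed. Alternatively, the Lusin--Souslin theorem applied to the continuous injection $(\mathcal P_2(\mathcal H),W_2)\hookrightarrow(\mathcal P(\mathcal H),\text{weak})$ of Polish spaces gives the equality of Borel $\sigma$-fields in one line.

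One wording slip: two arbitrary Borel representatives need not agree at every $t\notin N_0$, only for a.e.\ $t$. Since both equal $\Law(\Theta_t)$ almost everywhere, that is all the uniqueness claim requires.
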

\begin{proof}
\emph{Step 1 (the field).} The left limits $X_-,Y_-$ are predictable and agree with $X,Y$
$\diff t\otimes\diff\Prob$-a.e., as c\`adl\`ag paths jump at countably many times, so
$\int_0^T\mathbf 1_{\{X_{t-}\neq X_t\}}\,\diff t=0$ almost surely; the hypothesis on
$X,Y$ thus passes to them, and $\mathcal H$ is separable because $\Lnu$ is. The
$Z$-coordinate is predictable and square-integrable by assumption. For the
$U$-coordinate, the integrability hypothesis and the Fubini
isometry~\eqref{eq:fubiniiso} give
\[
U\in L^2\bigl([0,T]\times\Omega\times E\bigr)\cong L^2\bigl([0,T]\times\Omega;\Lnu\bigr),
\]
so the induced map $(t,\omega)\mapsto U_t(\omega,\cdot)$ admits a predictable, strongly
measurable $\Lnu$-valued representative. $\Theta^-$ is therefore a
predictable, hence jointly measurable, $\mathcal H$-valued field with
\[
\E\int_0^T\|\Theta_t^-\|_{\mathcal H}^2\,\diff t<\infty,
\qquad\text{that is,}\qquad
\Theta^-\in L^2\bigl([0,T]\times\Omega;\mathcal H\bigr).
\]
The Fubini isometry of~\cite[Proposition~1.2.24]{HNVW},
\[
L^2\bigl([0,T]\times\Omega;\mathcal H\bigr)\ \cong\ L^2\bigl(0,T;L^2(\Omega;\mathcal H)\bigr),
\]
supplies a strongly measurable
$L^2(\Omega;\mathcal H)$-valued representative of $t\mapsto\Theta_t^-$; by the Pettis
measurability theorem~\cite[Theorem~1.1.6]{HNVW}, that map takes values in a separable
closed subspace of $L^2(\Omega;\mathcal H)$ off a Lebesgue-null set of times, and
therefore admits a Borel version, agreeing with it for a.e.\ $t$.

\emph{Step 2 (Borel flow).} By the synchronous coupling bound~\eqref{eq:coupling}, the
law map $\Theta\mapsto\Law(\Theta)$ is $1$-Lipschitz from $L^2(\Omega;\mathcal H)$ into
$(\mathcal P_2(\mathcal H),W_2)$, which inherits Polishness from
$\mathcal H$~\cite[Ch.~6]{Villani2009}. Composing it with the Borel version of Step~1 gives a
Borel map $\mu^-:[0,T]\to\mathcal P_2(\mathcal H)$ with $\mu_t^-=\Law(\Theta_t^-)$
for a.e.\ $t$; any two such representatives agree a.e. The same argument applied to
the progressively measurable field $\Theta$ yields $\mu$; as $\Theta$ and $\Theta^-$
agree $\diff t\otimes\diff\Prob$-a.e., $\mu_t=\mu_t^-$ for a.e.\ $t$.

\emph{Step 3 (independence of representatives).} Let $(Z',U')$ be another pair of
predictable versions, so $Z=Z'$ $\diff t\otimes\diff\Prob$-a.e.\ and $U=U'$
$\diff t\otimes\diff\Prob\otimes\nu$-a.e.; by Fubini the latter gives $U=U'$ in $\Lnu$
for $\diff t\otimes\diff\Prob$-a.e.\ $(t,\omega)$. The two fields therefore agree in
$L^2(\Omega;\mathcal H)$ for a.e.\ $t$, whence $\mu_t^-=(\mu_t^-)'$ for a.e.\ $t$. The
coefficients are therefore evaluated at arguments coinciding
$\diff t\otimes\diff\Prob$-a.e.\ (in $\Lnu$ for the jump coefficient); changing the
Borel representative likewise alters $\mu^-$ only on a Lebesgue-null set of times, hence
the arguments only on a $\diff t\otimes\diff\Prob$-null set. In either case the
coefficient processes represent the same $L^2$ classes over their respective product
measures, so every Lebesgue and stochastic integral built from them, hence every
identity and estimate, is unchanged.
\end{proof}

 \end{document}